\renewcommand{\eprint}[1]{#1}
\numberwithin{equation}{section}
\theoremstyle{plain}
\newtheorem{theorem}{Theorem}[section] 
\newtheorem{lemma}[theorem]{Lemma}
\newtheorem{corollary}[theorem]{Corollary}
\newtheorem{proposition}[theorem]{Proposition}
\newtheorem{theoremAlph}{Theorem}
\theoremstyle{definition}
\newtheorem{definition}[theorem]{Definition}
\newtheorem{example}[theorem]{Example}
\theoremstyle{remark}
\newtheorem{remark}[theorem]{Remark}
\newcommand{\bC}{\mathbb C}
\newcommand{\bN}{\mathbb N}
\newcommand{\bR}{\mathbb R}
\newcommand{\bT}{\mathbb T}
\newcommand{\bZ}{\mathbb Z}
\newcommand{\cA}{\mathcal A}
\newcommand{\cB}{\mathcal B}
\newcommand{\cH}{\mathcal H}
\newcommand{\cM}{\mathcal M}
\newcommand{\cN}{\mathcal N}
\newcommand{\cO}{\mathcal O}
\newcommand{\cU}{\mathcal U}
\newcommand{\id}{\mathrm{id}}
\newcommand{\opo}{\mathrm{op}}
\newcommand{\SO}{\mathrm{SO}}
\DeclareMathOperator{\Aut}{Aut}
\DeclareMathOperator{\Rep}{Rep}
\DeclareMathOperator{\Irr}{Irr}
\DeclareMathOperator{\Sym}{Sym}
\DeclareMathOperator{\Ad}{Ad}
\title[Crossed Product Equivalence of Quantum Automorphism Groups]{Crossed Product Equivalence of Quantum Automorphism Groups of Finite Dimensional C$^\ast$-algebras}
\author{Michael Brannan}
\address{Michael Brannan\\ Department of Pure Mathematics and The Institute for Quantum Computing \\ University of Waterloo\\
Waterloo, ON\\
N2L 3G1 Canada}
\email{michael.brannan@uwaterloo.ca}
\author{Floris Elzinga}
\address{Floris Elzinga \\ Department of Mathematics \\
University of Oslo\\
P.O box 1053, Blindern \\
0316 Oslo, Norway}
\email{florise@math.uio.no}
\author{Samuel J. Harris}
\address{Samuel J. Harris \\ Department of Mathematics \& Statistics\\
Northern Arizona University \\
PO Box 5717, Flagstaff, AZ \\
86011 USA}
\email{samuel.harris@nau.edu}
\author{Makoto Yamashita}
\address{Makoto Yamashita \\ Department of Mathematics \\
University of Oslo\\
P.O box 1053, Blindern \\
0316 Oslo, Norway}
\email{makotoy@math.uio.no}
\begin{document}

\begin{abstract}
We compare the algebras of the quantum automorphism group of finite-dimensional C$^\ast$-algebra $B$, which includes the quantum permutation group $S_N^+$, where $N = \dim B$.
We show that matrix amplification and crossed products by trace-preserving actions by a finite Abelian group $\Gamma$ lead to isomorphic $\ast$-algebras.
This allows us to transfer various properties such as inner unitarity, Connes embeddability, and strong $1$-boundedness between the various algebras associated with these quantum groups.
\end{abstract}
\maketitle

\section{Introduction}

Given a finite dimensional C$^\ast$-algebra $B$ equipped with a faithful state $\psi$, Wang constructed in \cite{Wan98} the \emph{quantum automorphism group} $\Aut^+(B,\psi)$ of the finite measured quantum space $(B,\psi)$.
By construction, it is a C$^\ast$-algebraic compact quantum group whose underlying Hopf $\ast$-algebra $\cO(\Aut^+(B,\psi))$ is defined to be the universal $\ast$-algebra generated by the coefficients of a $\ast$-coaction on $B$ that leaves $\psi$ invariant.
In particular, when we consider the ``Plancherel'' trace  $\psi$ on $B$, the canonical tracial state invariant under the classical automorphism group $\Aut(B)$, then $\Aut^+(B) = \Aut(B, \psi)$ has a close analogy with $\Aut(B)$, as the Abelianization of $\cO(\Aut^+(B))$ becomes the function algebra $\cO(\Aut(B))$.

There are two extreme choices for $B$: one one hand is the abelian $B = \bC^N$, on the other is the full matrix algebra $B = M_n$.
In the former case, $\Aut^+(\bC^N)$ is the \emph{quantum permutation group} $S_N^+$ that has a close analogy to the permutation group $S_N$ and is accessible via various combinatorial methods.
In the latter case, $\Aut^+(M_n)$ can be identified with the projective version of the quantum orthogonal group $O_n^+$ \cite{MR1709109}, which, besides combinatorial methods, also allows analogy with classical functional analysis on the orthogonal group $O_n$. This kind of correspondence is what we are going to exploit in this work.

In the framework of operator algebraic quantum groups, operator algebraic completions of $\cO(\Aut^+(B))$ are known to have various interesting properties.
In general, if $\dim B \geq 5$, it is known that the reduced C$^\ast$-algebra $C^r(\Aut^+(B))$ is non-nuclear, exact, simple, with unique trace, and possesses the complete metric approximation property, while the von Neumann algebra $L^\infty(\Aut^+(B))$ is a non-injective, weakly amenable, strongly solid II$_1$-factor with the Haagerup property \cites{MR3138849,DeFrYa14,GWPrep,MR3391904}.
Moreover, $\cO(S_N^+)$ is residually finite-dimensional and consequently $L^\infty(S_N^+)$ has the Connes Embedding Property (CEP), for all $N$ \cite{BrChFr18}.

The above results lead to a question: How much do the operator algebras $C^r(\Aut^+(B))$ and $L^\infty(\Aut^+(B))$ actually depend on the initial data $B$?
One key tool common in the above works is the C$^\ast$-tensor category of finite dimensional unitary representations of these quantum groups, and induction of algebraic properties through monoidal equivalence.
In fact, the monoidal equivalence classes of the quantum groups $\Aut^+(B)$ are classified by the dimension of $B$ \cite{DeVa10}, hence one may hope that the operator algebras $C^r(\Aut^+(B))$ or $L^\infty(\Aut^+(B))$ may be closely related (possibly even isomorphic) as we range over $B$ with $\dim B$ fixed.
In particular, it is natural to ask if monoidal equivalence can be used to transfer the CEP from $L^\infty(S_N^+)$ to all $L^\infty(\Aut^+(B))$.

At the C$^\ast$-algebraic level, even when $\dim B$ is fixed, Voigt \cite{MR3717094} showed that  $C^r(\Aut^+(B))$ do depend on the choice of fiber functors realizing these quantum groups out of the common category $\Rep(\Aut^+(B)) = \Rep(S_N^+)$.
Nonetheless, it is an interesting question to ask to what degree the algebras above differ, on either the C$^\ast$-algebraic or the von Neumann algebraic level.

Our main result is that, up to crossed products by finite Abelian groups and matrix amplification, there are some concrete relations between the algebras of these quantum groups.

\begin{theoremAlph}[Corollary \ref{cor:dbl-cross-prod-is-mat-amp}, Theorem \ref{T: crossed product isomorphism}] \label{THA}
Let $N = \dim B \geq 4$.
Then there is a finite Abelian group $\Gamma$, a trace-preserving action $\alpha$ of $\Gamma$ on $\cO(\Aut^+(B))$, and another trace-preserving action $\beta$ of $\Gamma$ on the crossed product $\cO(\Aut^+(B)) \rtimes_\alpha \Gamma$, such that we have an isomorphism of tracial $\ast$-algebras
\begin{equation*}
    \cO(\Aut^+(B)) \rtimes_\alpha \Gamma \rtimes_\beta \Gamma \cong M_k \otimes \cO(S_N^+).
\end{equation*}
\end{theoremAlph}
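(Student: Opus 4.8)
The plan is to factor the statement into the two cited ingredients, with the bulk of the work concentrated in the single-crossed-product isomorphism. The Corollary (``double crossed product is matrix amplification'') I would treat as a Takai-type duality: for a finite Abelian $\Gamma$ acting on a tracial $\ast$-algebra $A$ by a trace-preserving action $\alpha$, one has $(A \rtimes_\alpha \Gamma)\rtimes_{\hat\alpha}\hat\Gamma \cong M_{\abs{\Gamma}}\otimes A$, where I identify $\hat\Gamma \cong \Gamma$ and regard the dual action as an honest $\Gamma$-action. I would verify this purely algebraically and check that the canonical trace on the iterated crossed product corresponds to $\mathrm{tr}_{\abs{\Gamma}}\otimes\tau$ under the isomorphism. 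Granting this, it suffices to establish the single isomorphism $\cO(\Aut^+(B)) \rtimes_\alpha \Gamma \cong \cO(S_N^+)\rtimes_\gamma \Gamma$ for suitable trace-preserving $\Gamma$-actions $\alpha,\gamma$: defining $\beta$ as the transport of $\hat\gamma$ across this isomorphism and applying the Corollary on the $S_N^+$ side then yields $\cO(\Aut^+(B))\rtimes_\alpha\Gamma\rtimes_\beta\Gamma \cong M_{\abs{\Gamma}}\otimes\cO(S_N^+)$, so $k = \abs{\Gamma}$.

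For the single isomorphism (Theorem \ref{T: crossed product isomorphism}) I would proceed via a common cover. Writing $B = \bigoplus_i M_{n_i}$ with $N = \sum_i n_i^2$ and recalling $\Aut^+(M_n) = O_n^+/\bZ_2$, I would seek a finite Abelian $\Gamma$ assembled from the block data — naturally a product of the $\bZ_2$-centers of the free orthogonal covers $O_{n_i}^+$, together with the data permuting isomorphic blocks — and exhibit both $\cO(\Aut^+(B))$ and $\cO(S_N^+)$ as the degree-zero parts of \emph{cleft} $\Gamma$-gradings of one algebra $\cO(\widetilde{\mathbb{G}})$. The key structural input is that a genuine central subgroup supplies invertible homogeneous elements in every graded component; cleftness then identifies $\cO(\widetilde{\mathbb{G}})$ with $\cO(\Aut^+(B))\rtimes_\alpha\Gamma$ on one side and with $\cO(S_N^+)\rtimes_\gamma\Gamma$ on the other, giving the desired isomorphism by transitivity.

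The main obstacle is precisely this step, and the reason it is nontrivial is the Voigt phenomenon cited in the introduction: although $\Aut^+(B) \sim_{\mathrm{mon}} S_N^+$ share the category $\Rep(S_N^+)$, their operator algebras depend on the choice of fiber functor, so the content of Theorem \ref{T: crossed product isomorphism} is exactly that crossing by $\Gamma$ washes out this fiber-functor dependence. Concretely I expect the difficulties to be: (i) pinning down $\Gamma$ and the two cleft structures from the block decomposition of $B$, including the contribution of permutations of isomorphic blocks; (ii) controlling the $2$-cocycle of each graded extension so that the crossed products are \emph{untwisted} and the ranks of $\cO(\widetilde{\mathbb{G}})$ over its two degree-zero subalgebras agree, so that one obtains an honest isomorphism of $\ast$-algebras rather than a mere Morita equivalence with mismatched matrix sizes; and (iii) carrying the Haar traces through every identification to conclude an isomorphism of \emph{tracial} $\ast$-algebras. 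Once these are in hand, the Takai step and the bookkeeping defining $\beta$ are formal.
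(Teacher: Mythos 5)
Your overall architecture is sound and matches the paper's first proof: the paper likewise establishes a single crossed product isomorphism ($C^r(S_N^+)\rtimes\Gamma^2\cong C^r(\Aut^+(B))\rtimes\Gamma^2$, with the algebraic version noted afterwards) and then invokes Takesaki--Takai duality to convert the second, dual crossed product into a matrix amplification, exactly as you describe, with $k=\abs{\Gamma}$; the trace bookkeeping is as routine as you claim.

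The gap is in the core step, where you both misidentify the group $\Gamma$ and leave the actual mechanism unspecified. The relation $\Aut^+(M_n)=O_n^+/\bZ_2$ and the $\bZ_2$-centres of the covers $O_{n_i}^+$ are irrelevant here: they govern the index-$2$ inclusion $L^\infty(\Aut^+(M_n))\subset L^\infty(O_n^+)$ used later for strong $1$-boundedness, not the passage between $\Aut^+(B)$ and $S_N^+$; for $B=M_3$, say, a group built from a single $\bZ_2$ (and there are no isomorphic blocks to permute) cannot account for $k=9$. The group that works is $\Gamma=\prod_r(\bZ_{n_r}\times\bZ_{n_r})$, embedded in $S_N$ via its free transitive action on the index set $X_r=\{0,\dots,n_r-1\}^2$ of matrix units of each block, and the mechanism is the dual $2$-cocycle $\sigma$ on $S_N^+$ pulled back from a \emph{nondegenerate} $2$-cocycle on $\hat\Gamma$, for which the twisted algebra $\tensor[_\sigma]{C(X_r)}{}$ is precisely $M_{n_r}$ --- this is what converts $\bC^N$ into $\bigoplus_r M_{n_r}$ and yields $\cO(\Aut^+(B))\cong\cO(S_N^+)^\sigma$. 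The crossed product isomorphism then follows from the general fact that $\tensor[_\sigma]{A}{}\rtimes\Lambda\cong A\rtimes\Lambda$ when $\sigma$ is induced from $\hat\Lambda$, which the paper proves by conjugating by an explicit unitary built from $\sigma$ rather than by a cleft-extension or common-cover argument; note moreover that since the Hopf twist $\cO(S_N^+)^\sigma=\tensor[_\sigma]{\cO(S_N^+)}{_{\sigma^{-1}}}$ deforms both the left and the right translation gradings, the acting group in this route must be $\Gamma\times\Gamma$, not $\Gamma$. Your items (i)--(iii) are exactly the content of the theorem rather than peripheral difficulties, so as it stands the proposal does not prove the key isomorphism. (The paper's second, sharper proof achieving $k=d^2$ with a single copy of $\Gamma$ at each stage is different again: it writes down an explicit finite-dimensional representation of the linking algebra via Weyl unitary error bases and extends it over the iterated crossed product by sending the group generators to generalized Pauli matrices.)
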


We prove two versions of this result for different values of $k = \operatorname{ord}(\Gamma)$.
Let us write $B = \bigoplus_{r=1}^m M_{n_r}$, and set $d = \prod_{r=1}^m n_r$.
Using cocycle deformation of Hopf algebras and coactions, we obtain the above result for $k = d^4$ from a $2$-cocycle induced from a finite subgroup of $S_N$.
However, we also show that the more efficient value $k = d^2$ is achievable using techniques inspired by non-local games in quantum information theory, in particular a certain quantum colouring game of the so-called quantum complete graph $K_B$ \cite{BGH20}.

Both of these ideas lead to the construction of certain concrete finite dimensional representations of the linking algebra $\cO(\Aut^+(B),S_N^+)$ associated to the monoidal equivalence between $\Aut^+(B)$ and $S_N^+$.
Combining this with the standard induction argument, we can transfer finite dimensional approximation results on $S_N^+$ \cite{BrChFr18} to all quantum automorphism groups.

\begin{theoremAlph}[Corollary \ref{cor:Aut-RFD-and-CEP}]\label{THB}
Let $B$ be a finite dimensional C$^\ast$-algebra and $\psi$ be a faithful tracial state on $B$.
Then the Hopf $\ast$-algebra $\cO(\Aut^+(B,\psi))$ is residually finite-dimensional and the von Neumann algebra $L^\infty(\Aut^+(B))$ has the CEP.
\end{theoremAlph}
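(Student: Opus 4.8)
The plan is to derive both conclusions from the structural isomorphism of Theorem A, exploiting that residual finite-dimensionality (RFD) and the Connes Embedding Property (CEP) are inherited by sub-objects and are insensitive to matrix amplification and to crossed products by finite abelian groups. I would first dispose of the low-dimensional cases $\dim B \le 3$: here $B$ is necessarily commutative and $\Aut^+(B,\psi)$ is a classical (finite) permutation group, so $\cO(\Aut^+(B,\psi))$ is commutative and hence trivially RFD, while $L^\infty(\Aut^+(B))$ is finite-dimensional and hence trivially has the CEP. For $\dim B = N \ge 4$ I would take $\psi$ to be the canonical (Plancherel) trace, so that Theorem A applies verbatim, and treat a general faithful tracial $\psi$ afterwards via monoidal equivalence.

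For RFD: $\cO(S_N^+)$ is RFD by \cite{BrChFr18}, and matrix amplification preserves RFD, since a separating family $\{\pi_i\}$ of finite-dimensional $\ast$-representations of $\cO(S_N^+)$ yields the separating family $\{\id_{M_k} \otimes \pi_i\}$ of finite-dimensional $\ast$-representations of $M_k \otimes \cO(S_N^+)$. By Theorem A the double crossed product $\cO(\Aut^+(B)) \rtimes_\alpha \Gamma \rtimes_\beta \Gamma$ is therefore RFD. Since RFD is inherited by $\ast$-subalgebras (restrict a separating family) and $\cO(\Aut^+(B))$ sits as a unital $\ast$-subalgebra of its double crossed product, it follows that $\cO(\Aut^+(B))$ is RFD.

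For CEP: the isomorphism of Theorem A is one of tracial $\ast$-algebras, so it extends to the tracial von Neumann completions and gives $L^\infty(\Aut^+(B)) \rtimes_\alpha \Gamma \rtimes_\beta \Gamma \cong M_k \otimes L^\infty(S_N^+)$. As $L^\infty(S_N^+)$ has the CEP by \cite{BrChFr18} and $M_k$ is Connes-embeddable, the amplification $M_k \otimes L^\infty(S_N^+)$ embeds trace-preservingly into $R^\omega$ (via $M_k \otimes R^\omega \hookrightarrow (M_k \otimes R)^\omega \cong R^\omega$). Because $\alpha$ and $\beta$ are trace-preserving, the canonical traces on the successive crossed products restrict to the Haar trace on $L^\infty(\Aut^+(B))$, realizing it as a von Neumann subalgebra of a Connes-embeddable tracial von Neumann algebra; restricting the embedding into $R^\omega$ then shows $L^\infty(\Aut^+(B))$ has the CEP.

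The remaining, and most delicate, point is the passage from the canonical trace, where Theorem A applies directly, to an arbitrary faithful tracial state $\psi$. Here I would invoke the monoidal equivalence $\Aut^+(B,\psi) \sim S_N^+$ and feed the concrete finite-dimensional representations of the linking algebra $\cO(\Aut^+(B),S_N^+)$ constructed in the proof of Theorem A into the standard induction machinery: pulling back finite-dimensional representations of $\cO(S_N^+)$ along such a representation yields a separating family for $\cO(\Aut^+(B,\psi))$ (giving RFD), while the same data produces a trace-preserving embedding of $L^\infty(\Aut^+(B,\psi))$ into $R^\omega$ (giving the CEP). I expect the main obstacle to be the bookkeeping at this last step: verifying that the induced representations are genuinely finite-dimensional and intertwine the Haar traces, so that both the separation property and the trace-preservation needed for $R^\omega$-embeddability survive the induction.
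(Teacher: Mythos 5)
Your argument for the Plancherel trace is sound, and it is a mild variant of what the paper does: you deduce RFD and the CEP from the crossed product isomorphism of Theorem A (passing to the unital $\ast$-subalgebra $\cO(\Aut^+(B))$ of the double crossed product $\cong M_k \otimes \cO(S_N^+)$, and likewise for the trace-preserving von Neumann embedding), whereas the paper routes the same input -- a finite-dimensional representation of the linking algebra $\cO(\Aut^+(B),S_N^+)$ (Proposition \ref{prop:fin-dim-rep-link-Autplus-SNplus}) -- through the general transfer results (Theorems \ref{T: transferring matrix models RFD} and \ref{T: transferring matrix models CEP}), which use the embeddings $\rho = (\pi\otimes\id)\delta_2$ and $\theta_1$ directly. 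Both are legitimate; the paper's version avoids having to discuss heredity of RFD under subalgebras and of the CEP under trace-preserving inclusions, but these are standard, so this part of your proof is fine.

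The genuine gap is in your final step, the passage to an arbitrary faithful tracial state $\psi$. You propose to ``invoke the monoidal equivalence $\Aut^+(B,\psi) \sim S_N^+$,'' but no such monoidal equivalence exists unless $\psi$ is a $\delta$-form: by Theorem \ref{thm:derijdt-vandervennet} monoidal equivalence among quantum automorphism groups is governed by the value of $\delta$, and for a tracial state the only $\delta$-form is the Plancherel trace. For a general faithful trace, $\Aut^+(B,\psi)$ does not even have the $\SO(3)$ fusion rules -- by Proposition \ref{P: general trace} it decomposes as a free product $\ast_{r}\, \Aut^+(B_r,\psi_r)$ with each $\psi_r$ a (tracial, hence Plancherel) $\delta_r$-form -- so there is no linking algebra with $S_N^+$ to feed into the induction machinery, and this step of your proof cannot be carried out as written. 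The correct route, and the one the paper takes, is to apply the Plancherel case to each free factor $\cO(\Aut^+(B_r,\psi_r))$ separately and then invoke the stability of RFD under (unital, faithful-trace) free products \cite{EL} and the corresponding stability of the CEP for free products of tracial von Neumann algebras. With that substitution your argument closes; without it, the general case is unproved.
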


In fact, an even stronger form of residual finite-dimensionality holds for $\cO(\Aut^+(B))$. 
Recall that the Hopf $\ast$-algebra $\cO(G)$ of a compact quantum group is called \emph{inner unitary} if it admits an inner faithful $\ast$-homomorphism into some $M_k$ \cites{bb-inner, BrChFr18}.
Inner unitarity of $\cO(G)$ is a quantum generalization of the property of discrete group $\Gamma$ admitting an embedding into a unitary group $U_k \subseteq M_k$.  Indeed,  if $\cO(G) = \bC \Gamma$ for some discrete group $\Gamma$, then $\cO(G)$ is inner unitary if and only if an embedding $\Gamma  \hookrightarrow U_k$ exists. In general, $\cO(G)$ is residually finite-dimensional if it is inner unitary.

\begin{theoremAlph}[Proposition \ref{prop:Aut-Inner-Unitary}, Theorem \ref{thm:FO-Inner-Unitary}]\label{THC}
Let $B$ be a finite-dimensional C$^\ast$-algebra.  Assume that $\dim(B)$ lies outside the range $[6,9]$.
Then $\cO(\Aut^+(B))$ is inner unitary.  The same conclusion also holds for the Hopf $\ast$-algebras $\cO(O_n^+)$ for $n \ne 3$, where $O_n^+$ is the free orthogonal quantum group. 
\end{theoremAlph}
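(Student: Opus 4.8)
The plan is to make inner unitarity concrete as the existence of a finite-dimensional matrix model of the defining corepresentation whose \emph{Hopf image} is all of $\cO(G)$. For $\cO(O_n^+)$ this means producing self-adjoint matrices $X_{ij} \in M_k$ such that $X = (X_{ij})$ is orthogonal over $M_k$, i.e.\ $\sum_j X_{ij} X_{kj} = \delta_{ik} = \sum_j X_{ji} X_{jk}$, and such that the induced $\ast$-homomorphism $\pi \colon \cO(O_n^+) \to M_k$, $u_{ij} \mapsto X_{ij}$, is inner faithful. For $\cO(\Aut^+(B))$ the corresponding object is a finite-dimensional model of the magic-type coaction on $B$, that is, a genuinely quantum colouring of the quantum complete graph $K_B$ in the sense of \cite{BGH20}. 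Inner unitarity is then exactly the assertion that such a model with full Hopf image exists.

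The engine for achieving full Hopf image is topological generation combined with the representation product. If $\pi_1, \pi_2$ are finite-dimensional models with Hopf images the quantum subgroups $H_1, H_2 \le G$, then the product $\pi = (\pi_1 \otimes \pi_2)\Delta \colon \cO(G) \to M_{k_1} \otimes M_{k_2}$ has Hopf image the quantum subgroup $\langle H_1, H_2 \rangle$ generated by the two. Since each $\pi_i$ is finite-dimensional, each $H_i$ is automatically inner unitary; hence $\pi$ is inner faithful as soon as $H_1$ and $H_2$ topologically generate $G$. The task thus reduces to exhibiting two convenient inner-unitary quantum subgroups and proving that together they generate everything.

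I would carry this out first for $O_n^+$. A natural candidate for one factor is the diagonal model $u_{ij} \mapsto \delta_{ij} g_i$ with $g_1, \dots, g_n$ self-adjoint unitaries; realising the $g_i$ by generic reflections shows this has inner-unitary Hopf image $\widehat{\bZ_2^{*n}} \le O_n^+$. For the other factor I would take a model whose Hopf image is a finite subgroup $\Gamma \le O_n$ (finite, hence inner unitary), or a suitably twisted (anticommuting/Clifford-type) model. The decisive, and hardest, step is then to prove $O_n^+ = \langle \Gamma, \widehat{\bZ_2^{*n}} \rangle$ topologically for an appropriate choice when $n \ne 3$; this is precisely where $n = 3$ must be excluded, since the finite subgroups of $O_3$ are too small to topologically generate $O_3^+$ together with the twisted part. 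To pass to the projective case $\Aut^+(M_n) = PO_n^+ = O_n^+/\bZ_2$, I would restrict an inner faithful model of $\cO(O_n^+)$ to the Hopf subalgebra $\cO(PO_n^+) = \cO(O_n^+)^{\bZ_2}$ and check that inner faithfulness descends: a Hopf ideal $J \subseteq \cO(PO_n^+)$ killed by $\pi$ is a coideal in $\cO(O_n^+)$, so the two-sided ideal it generates is a genuine Hopf ideal of $\cO(O_n^+)$ contained in $\ker \pi$, hence zero by inner faithfulness, forcing $J = 0$. This handles $\dim B = n^2$ with $n \ne 3$, $\dim B = 9$ being the only excluded square.

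For general $B$, and in particular for $S_N^+ = \Aut^+(\bC^N)$, I would run the same scheme with one factor the finite classical group $\Aut(B)$ and the other the Hopf image of a genuinely quantum colouring model of $K_B$ from \cite{BGH20}, and prove topological generation of $\Aut^+(B)$ by these. Here the quantum-colouring model does its job precisely when $\dim B$ lies outside $[6,9]$, which is the origin of the stated range: for $\dim B \le 5$ the quantum groups are small enough to treat directly (e.g.\ the Pauli model of $S_4^+$), for $\dim B \ge 10$ the colouring construction furnishes the needed genuinely quantum model, and the intermediate dimensions are exactly where no such model is available. I expect the main obstacle throughout to be the topological-generation statements identifying the combined Hopf image with the whole quantum group, together with the delineation of the excluded cases $n = 3$ and $\dim B \in [6,9]$ that these generation results force; by comparison, the descent of inner faithfulness to the $\bZ_2$-fixed Hopf subalgebra in the projective case is routine.
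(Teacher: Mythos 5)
Your overall framework (Hopf images, topological generation, products of representations) is the right language, but the proof has a genuine gap: every topological-generation statement you rely on is left unproven, and those statements \emph{are} the theorem. For general $B$ you would need to show that $\Aut^+(B)$ is topologically generated by $\Aut(B)$ together with the Hopf image of a quantum colouring model; no such result is available in the literature (the results of \cite{BrChFr18} cover $S_N^+$ and quantum reflection groups only), and proving it for arbitrary multimatrix $B$ would be a substantial new project. The paper avoids this entirely: it quotes inner unitarity of $\cO(S_N^+)$ for $N \notin [6,9]$ from \cite{BrChFr18} as a black box, and then \emph{transfers} it to $\cO(\Aut^+(B))$ using the embedding $\cO(\Aut^+(B)) \hookrightarrow M_{d^2} \otimes \cO(S_N^+) \otimes M_{d^2}$ coming from the cocycle-twist/crossed-product isomorphism of Theorem A, together with an adaptation of \cite{bb-inner}*{Theorem 6.3}. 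Your plan never invokes Theorem A, which is precisely the tool that makes the general case accessible; this is also why the excluded range is exactly $[6,9]$ --- it is inherited verbatim from the $S_N^+$ result, not rederived from a generation analysis.

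The second gap is the direction of the argument for $O_n^+$. You propose to first establish inner unitarity of $\cO(O_n^+)$ by topological generation and then descend to $\cO(PO_n^+) = \cO(\Aut^+(M_n))$ by restricting to the $\bZ_2$-fixed Hopf subalgebra. The descent step itself is fine (a Hopf $\ast$-ideal of the subalgebra generates a Hopf $\ast$-ideal of $\cO(O_n^+)$ inside the same kernel), but the starting point --- an inner faithful finite-dimensional model of $\cO(O_n^+)$ --- is not something you can take for granted: it is one of the two conclusions of the theorem, and no generation result of the form $O_n^+ = \langle \Gamma, \widehat{\bZ_2^{*n}}\rangle$ is established or cited. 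The paper runs the implication in the opposite direction: it first gets inner unitarity of $\cO(\Aut^+(M_n))$ (from $S_{n^2}^+$ as above, which is where $n=3$, i.e.\ $\dim M_3 = 9$, drops out), then \emph{induces} the finite-dimensional inner faithful representation up along the finite module $\cO(O_n^+) \otimes_{\cO(\Aut^+(M_n))} H$, and verifies inner faithfulness of the induced representation by a fusion-rule/Frobenius-reciprocity argument using $U^{\otimes 2} \in \Rep \Aut^+(M_n)$. To repair your write-up you would either have to supply the generation results you defer, or reverse the $O_n^+ \leftrightarrow PO_n^+$ step and add the transfer mechanism from $S_N^+$.
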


\medskip
\emph{Strong $1$-boundedness} is an important free probabilistic property of tracial von Neumann algebras $\cM$ introduced by Jung \cite{MR2373014}, that is computable from any finite generating set $\cU \subset \cM$ and is a strengthened version of $\delta_0(\mathcal{U}) \le 1$ for Voiculescu's modified free entropy dimension \cite{MR1371236}.
In particular, this is an obstruction to isomorphism with another tracial von Neumann algebra with a generating set satisfying $\delta_0(\cU) > 1$, such as an interpolated free group factor.

While $\delta_0(\cU)$ could be difficult to compute in general, a useful estimate is given in terms of  \emph{$\ell^2$-Betti numbers}.
Generalizing an estimate for the von Neumann algebras of discrete groups \citelist{\cite{MR2889142}\cite{MR2180603}}, when $G$ is a compact matrix quantum group and $\cU$ is the standard set of generators coming from the fundamental representation, one has \cite{BCV}
\begin{equation} \label{BCV-estimate}
1 \leq \delta_0(\mathcal{U}) \leq 1 - \beta_0^{(2)}(\hat G) + \beta_1^{(2)}(\hat G),
\end{equation}
where $\beta_k^{(2)}(\hat G)$ are the $\ell^2$-Betti numbers of the discrete dual of $G$ defined by Kyed \cite{MR2464704}.

For our quantum groups of interest, we have the vanishing of $\beta_0^{(2)}(\hat G)$ and $\beta_1^{(2)}(\hat G)$ \citelist{\cite{MR1975007}\cite{MR3049699}\cite{MR3683927}}, hence the standard generators satisfy $\delta_0(\cU) = 1$.
To upgrade this to the strong $1$-boundedness, one needs to work with more precise algebraic relations, and estimate regularity and rank of the induced operators \citelist{\cite{arXiv:1602.04726}\cite{MR4198970}}.
This was successfully carried out by the first author and Vergnioux for $O_n^+$ \cite{MR3783410}, and by the second author for the quantum orthogonal group $O_J^{+}$ associated with the symplectic matrix \cite{MR4288353}.

Based on these results and our main results, we can now prove the strong $1$-boundedness of $\Aut^+(B)$ for some cases, as follows.

\begin{theoremAlph}[Corollary \ref{col:S1B-QAG}]\label{THE}
Let $B$ be a C$^\ast$-algebra such that $\dim B = n^2$ with $n \geq 3$.
Then $L^\infty(\Aut^+(B))$ is a strongly $1$-bounded II$_1$-factor.
\end{theoremAlph}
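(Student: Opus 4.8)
The plan is to transfer strong $1$-boundedness along a short chain of quantum groups, taking as the sole external analytic input the theorem of Brannan and Vergnioux \cite{MR3783410} that $L^\infty(O_n^+)$ is a strongly $1$-bounded II$_1$-factor for every $n \geq 3$. Everything else should follow from formal stability properties of strong $1$-boundedness combined with the crossed-product equivalence of Theorem \ref{THA}. Note first that the II$_1$-factor conclusion itself is not at issue: for $n \geq 3$ we have $\dim B = n^2 \geq 9 \geq 5$, so $L^\infty(\Aut^+(B))$ is already a non-injective II$_1$-factor by the results recalled in the introduction; the content of the statement is therefore strong $1$-boundedness.

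The first step is to pass from $O_n^+$ to the projective orthogonal quantum group $PO_n^+ := \Aut^+(M_n)$. Here $\cO(PO_n^+)$ is the even (degree-two) part of $\cO(O_n^+)$, that is, the fixed-point algebra of the trace-preserving $\bZ/2\bZ$-action $u_{ij} \mapsto -u_{ij}$; on GNS completions this yields an index-$2$ inclusion $L^\infty(PO_n^+) \subseteq L^\infty(O_n^+)$. Since strong $1$-boundedness is preserved under finite-index inclusions of II$_1$-factors (in both directions), I conclude that $L^\infty(\Aut^+(M_n))$ is strongly $1$-bounded.

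The second step spreads this to all of the relevant quantum automorphism groups via Theorem \ref{THA}. Fix $N = n^2$. For each $B$ with $\dim B = N$, Theorem \ref{THA} supplies a finite Abelian group $\Gamma$ and trace-preserving actions with
\[
\cO(\Aut^+(B)) \rtimes_\alpha \Gamma \rtimes_\beta \Gamma \cong M_k \otimes \cO(S_N^+)
\]
as tracial $\ast$-algebras; since the traces and actions match, this extends to an isomorphism of GNS von Neumann completions
\[
L^\infty(\Aut^+(B)) \rtimes_\alpha \Gamma \rtimes_\beta \Gamma \cong M_k \otimes L^\infty(S_N^+).
\]
I now use that strong $1$-boundedness is unchanged under matrix amplification $\cM \leftrightarrow M_k \otimes \cM$ and transfers both ways across crossed products by finite groups $\cM \leftrightarrow \cM \rtimes \Gamma$ (each being a finite-index extension). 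It follows that, for every such $B$, the algebra $L^\infty(\Aut^+(B))$ is strongly $1$-bounded if and only if $L^\infty(S_N^+)$ is. Applying this with $B = M_n$ and invoking the first step shows that $L^\infty(S_{n^2}^+)$ is strongly $1$-bounded; applying the same equivalence to an arbitrary $B$ with $\dim B = n^2$ then gives the theorem.

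The main obstacle is assembling the stability toolkit: invariance of strong $1$-boundedness under amplification, and its transfer across finite-index inclusions (hence across fixed-point algebras of finite group actions and across finite crossed products), together with checking that the II$_1$-factor and finite-generation hypotheses survive each step. The cleanest route is to phrase everything through the $1$-bounded entropy of Hayes, whose finiteness (vanishing) characterizes strong $1$-boundedness and which is tailored to exactly such finite-index and amplification comparisons; the genuinely quantum-group-theoretic input, the Brannan--Vergnioux estimate for $O_n^+$, then enters only at the very first step, while Theorem \ref{THA} does all of the work of removing the dependence on $B$.
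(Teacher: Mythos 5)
Your proposal is correct and follows essentially the same route as the paper: the index-$2$ inclusion $L^\infty(\Aut^+(M_n)) \subset L^\infty(O_n^+)$ together with the Brannan--Vergnioux theorem, then the crossed-product/amplification equivalence of Theorem \ref{THA} combined with stability of strong $1$-boundedness under finite-index inclusions (Theorem \ref{S1B-SFs}) and amplifications (Proposition \ref{S1B-Amp}). The only caveat is that a single crossed product $\cM \rtimes \Gamma$ need not be a factor, so rather than transferring step by step across each crossed product you should, as the paper does, apply Theorem \ref{S1B-SFs} to the composite finite-index inclusion $L^\infty(S_{n^2}^+) \subset L^\infty(\Aut^+(B)) \otimes M_k$ coming from Corollary \ref{cor:dbl-cross-prod-is-mat-amp}, whose endpoints are genuinely II$_1$-factors (or use Hayes' $1$-bounded entropy, as you suggest, which does not require factoriality of the intermediate algebras).
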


The starting point is the index $2$ inclusion $L^\infty(\Aut^+(M_n)) \subset L^\infty(O_n^+)$.
Moreover, Theorem \ref{THA} gives rise to finite index embeddings into the common overfactors of the von Neumann algebras $L^\infty(\Aut^+(B))$ with fixed $\dim B$.
Thus, the remaining task is to obtain permanence of strong $1$-boundedness under such relations.

In general, given a finite index inclusions of II$_1$-factors $\cN \subset \cM$, one expects
\begin{align*}
    \delta_0(X) - 1 = [\cM:\cN] \left( \delta_0(Y) - 1 \right)
\end{align*}
for generating sets $X$ for $\cN$ and $Y$ for $\cM$ as an analogue of the Nielsen--Schreier Theorem for inclusions of free groups.
This theorem states that a finite index subgroup $H$ of a free group $G$ of rank $r(G)$ must also be free, and moreover that the rank $r(H)$ of the subgroup satisfies
\begin{align*}
    r(H) - 1 = [G:H] \left( r(G) - 1 \right) .
\end{align*}
This was investigated by Jung in \cite{arXiv:math/0410594}, where he proved, among other things, that this equality holds if $\cM = \cN \otimes M_k$ for some $k$ and $Y$ is $X$ together with the matrix units, but in general only weaker inequalities were established.

Notice that Schreier's formula above suggests that having free entropy dimension equal to $1$ is preserved by finite index inclusions.
We can prove this statement under the stronger assumption of strong $1$-boundedness. The following result was also independently obtained by  Srivatsav Kunnawalkam Elayavalli (private communication). 

\begin{theoremAlph}[Theorem \ref{S1B-SFs}] \label{THD}
Assume that $\cN\subset \cM$ is a unital finite index inclusion of II$_1$-factors. Then $\cN$ is strongly $1$-bounded if and only if $\cM$ is strongly $1$-bounded.
\end{theoremAlph}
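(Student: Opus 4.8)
The plan is to work with Hayes's theory of $1$-bounded entropy $h(\cN : \cM)$, which refines Jung's strong $1$-boundedness \cite{MR2373014}: a finitely generated, Connes-embeddable II$_1$-factor $\cP$ is strongly $1$-bounded if and only if its absolute $1$-bounded entropy $h(\cP) := h(\cP : \cP)$ is finite. I will use three standard features of $h$: (i) it is monotone increasing in the first argument and monotone \emph{decreasing} in the ambient algebra, so that $h(\cN : \cM) \le \min\{h(\cN : \cN),\, h(\cM : \cM)\}$ whenever $\cN \subseteq \cM$; (ii) finiteness of $h$ is invariant under amplification, i.e. $h(\cP^t) < \infty$ iff $h(\cP) < \infty$ for every $t > 0$ (this is also how Jung's strong $1$-boundedness behaves \cite{arXiv:math/0410594}); and (iii) the side conditions of finite generation and Connes-embeddability are themselves preserved in both directions by a finite-index inclusion. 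For (iii), if $\cN$ is finitely generated then so is $\cM$, being generated by $\cN$ together with a finite Pimsner--Popa basis; conversely, $\cM$ finitely generated makes the Jones extension $\cM_1 = \langle \cM, e_\cN\rangle$ finitely generated, and $\cN \cong e_\cN \cM_1 e_\cN$ is a corner of it, hence finitely generated, while Connes-embeddability transfers similarly through $\cM_1 \cong \cN^{[\cM:\cN]}$. Thus the theorem reduces to the assertion that $h(\cN) < \infty$ iff $h(\cM) < \infty$.

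The heart of the matter is the following one-directional statement, which I call Lemma~X: \emph{if $\cA \subseteq \cB$ is a finite-index inclusion of II$_1$-factors and $\cB$ is strongly $1$-bounded, then so is $\cA$.} Granting this, the full equivalence follows by a basic-construction trick. One direction of the theorem is exactly Lemma~X applied to $\cN \subseteq \cM$. For the converse, assume $\cN$ is strongly $1$-bounded and form the Jones tower $\cN \subset \cM \subset \cM_1 = \langle \cM, e_\cN\rangle$. By Jones's basic construction $\cM_1 \cong \cN^{[\cM:\cN]}$, so by the amplification invariance (ii) the factor $\cM_1$ is again strongly $1$-bounded; since $\cM \subseteq \cM_1$ has finite index $[\cM_1 : \cM] = [\cM:\cN]$, Lemma~X applied to $\cM \subseteq \cM_1$ shows that $\cM$ is strongly $1$-bounded. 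Hence everything is reduced to Lemma~X together with the known amplification invariance.

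To prove Lemma~X I would show $h(\cA) \le h(\cB) < \infty$. The inequality $h(\cA : \cB) \le h(\cB : \cB) = h(\cB)$ is immediate from first-argument monotonicity, so it suffices to establish
\begin{equation*}
h(\cA) = h(\cA : \cA) \le h(\cA : \cB) + C
\end{equation*}
for some finite constant $C$; equivalently, that passing to the finite-index ambient $\cB$ does not, beyond a finite additive defect, shrink the microstate space of $\cA$. Concretely, fix an orthonormal Pimsner--Popa basis $m_1 = 1, m_2, \dots, m_\ell \in \cB$ for $\cB$ over $\cA$, so that $\cB = \sum_i \cA m_i$ and the module structure is encoded by the finitely many structure elements $E_\cA(m_i^\ast x m_j) \in \cA$. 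Given a matricial microstate $\pi \colon \cA \to M_n$, one induces a microstate of $\cB$ on the amplified space $M_{\ell n} \cong M_\ell \otimes M_n$ by realizing the left regular representation of $\cB$ on $L^2(\cB) = \bigoplus_i \cA m_i$ through $\pi$; the rigidity of the basis relations forces these extensions to occupy a region whose covering number, after the $n^{-2}$ normalization defining $h$ and the $\ell^{-2}$ rescaling from the amplification, matches that of the $\cA$-microstates up to a bounded error. This yields the desired inequality, and hence $h(\cA) < \infty$.

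The main obstacle is precisely this microstate-extension estimate: one must verify that the finitely many basis elements $m_i$ contribute no free dimension, i.e. that their admissible matrix realizations over a given microstate of $\cA$ form a set of controlled (indeed bounded) $1$-bounded entropy, which is where the orthonormality relations $E_\cA(m_i^\ast m_j) = \delta_{ij}$ and the finiteness of the index enter in an essential way. Getting the covering-number bookkeeping to survive the two competing normalizations --- $n^{-2}$ from the microstate dimension and the amplification factor from $\ell$ --- is the delicate point, and is the $1$-bounded analogue of Jung's finite-index analysis of the free entropy dimension \cite{arXiv:math/0410594}. The remaining verifications, namely the amplification invariance of strong $1$-boundedness, the identification $\cM_1 \cong \cN^{[\cM:\cN]}$, and the transfer of finite generation and Connes-embeddability, are standard.
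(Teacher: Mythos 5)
Your overall architecture is reasonable: reducing the converse direction to a one-directional ``Lemma X'' via $\cM_1\cong\cN^{[\cM:\cN]}$ and amplification invariance (Proposition \ref{S1B-Amp}) is essentially the same skeleton the paper uses. The genuine gap is Lemma X itself, which you leave unproven and which, as you propose to prove it, is the hard part rather than a routine verification. The microstate-extension estimate you sketch --- inducing a $\cB$-microstate on $M_{\ell n}$ from an $\cA$-microstate $\pi$ on $M_n$ via a Pimsner--Popa basis --- cannot deliver the additive inequality $h(\cA:\cA)\le h(\cA:\cB)+C$ as stated: the induced microstates live on an $\ell$-fold amplification and restrict to $\cA$ as $I_\ell\otimes\pi$, so after the $(\ell n)^{-2}$ normalization the construction yields at best a multiplicative bound of the shape $h(\cA)\le\ell^2\,h(\cA:\cB)$ (still enough for finiteness, but not what you wrote), and even that requires an orthonormal basis (which exists only for integer index; in general $E_{\cA}(m_\ell^*m_\ell)$ is a nontrivial projection and one must work in a corner of $M_\ell\otimes M_n$), a verification that the induced representation is approximately trace-preserving, and control of the $\limsup$ along the subsequence $\ell n$. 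None of this is carried out, and it is precisely the territory of \cite{arXiv:math/0410594}, where, as the introduction notes, only weaker inequalities were obtained in general.

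The paper sidesteps all of this by running the induction \emph{upward} rather than downward. The point is that $\cM_1=W^*(\cM,e_1)$, where the Jones projection $e_1$ commutes with the diffuse subalgebra $\cN\subseteq\cM$; hence the unitary $u_1=1-2e_1$ fixes any diffuse self-adjoint $x\in\cN$, and Jung's normalizer lemma (Proposition \ref{JP-OF}, a mild variant of \cite{MR2373014}*{Theorem 5.3}) gives property (J), hence strong $1$-boundedness, for $\cM_1$ with no microstate bookkeeping at all; then $\cN$ is an amplification of $\cM_1$ and Proposition \ref{S1B-Amp} finishes that direction, the other direction being symmetric via the Jones tunnel. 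Note that your Lemma X follows from this same soft argument applied to $\cB_1=\langle\cB,e_{\cA}\rangle\cong\cA^{[\cB:\cA]}$, so the step you take as primitive is exactly the one that should be derived, while the step you derive from it is the easy one. To repair the write-up, either replace your proof of Lemma X by the Jones-projection argument, or supply the covering-number estimate in full, including the non-integer-index, trace-preservation, and normalization issues above.
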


As a consequence of Theorems \ref{THA} and \ref{THD}, we can prove that an infinite family of quantum automorphism groups give rise to new examples of strongly $1$-bounded II$_1$-factors which have neither Property $\Gamma$ nor Property (T).

We also briefly consider the free unitary quantum groups $U_n^+$.
The estimate \eqref{BCV-estimate} becomes $1 \le \delta_0(\cU) \le 2$ for this case  \cite{MR3784753}, and the isomorphism $L^\infty(U_2^+) \cong \mathcal{L} \mathbb F_2$ \cite{Ba97} suggests that equality with the upper bound is likely to happen.
While we cannot quite verify this, we can show that $L^\infty(U_n^+)$ is not strongly $1$-bounded (Proposition \ref{P:nots1b}).
In particular, this implies the non-isomorphism result $L^\infty(U_n^+) \ncong L^\infty(O^+_m)$ for any $n , m \ge 2$.

\subsection{Outline of the Paper}

In Section \ref{sec:Preliminaries} we briefly review the necessary material on compact quantum groups (in particular the quantum automorphism groups) and monoidal equivalences between them.

We give the first proof of Theorem \ref{THA} using cocycle deformation in Section \ref{sec:IsoFromCocycleDeformations}.
Section \ref{sec:mon-equiv-mat-model} contains transference results for monoidal equivalence when the linking algebra admits a finite dimensional representation and applies these results to the embeddings in Theorem \ref{THA}.
This establishes Theorem \ref{THB} and Theorem \ref{THC}.
We continue with applications of Theorem \ref{THA} to strong $1$-boundedness in Section \ref{sec:S1B}.
We recall the definition of strong $1$-boundedness and some results from the theory of subfactors before establishing Theorem \ref{THD}, the main technical result of the section, and proceed to derive Theorem \ref{THE}.
Additionally we discuss lack of strong $1$-boundedness for the free unitary quantum groups.

In the final Section \ref{sec:unitary-error-basis} we provide another proof of Theorem \ref{THA} using ideas from unitary error bases and the theory of non-local games, which we first recall.
This proof leads to a more efficient version of Theorem \ref{THA}, with a smaller acting group and lower dimensional matrix algebras.

\subsection*{Acknowledgements} Brannan and Harris were partially supported by NSF Grant DMS--2000331. Harris was partially supported by an NSERC Postdoctoral Fellowship.
Elzinga and Yamashita were partially supported by the NFR funded project 300837 ``Quantum Symmetry''.
We also thank Mathematisches Forschungsinstitut Oberwolfach for hosting the workshop `Quantum Groups - Algebra, Analysis and Category Theory' in September 2021, where our collaboration in its final form started.
The authors are indebted to Julien Bichon for his insightful comments on an earlier version of this work and for pointing out the applications to inner unitary Hopf $\ast$-algebras, as well as connections to cocycle twists.
The authors also thank Srivatsav Kunnawalkam Elayavalli for useful insights on strongly $1$-bounded von Neumann algebras.

\section{Preliminaries}
\label{sec:Preliminaries}
\subsection{Compact Quantum Groups}

For the basic theory of compact quantum groups and their representation categories, we refer to the book \cite{MR3204665}.

 \begin{definition}
 A {\it compact quantum group} $G$ consists of a unital Hopf $\ast$-algebra $\cO(G)$ with coproduct $\Delta \colon \cO(G) \to \cO(G) \otimes \cO(G)$ together with a {\it Haar functional}, which is a linear map $h \colon \cO(G) \to \bC$ satisfying the following properties:
 \begin{itemize}
     \item $h$ is {\it invariant} in the sense that $(\iota \otimes h) \Delta(x) = h(x) 1 = (h \otimes \iota) \Delta(x)$ for all $x \in \cO(G)$;
     \item $h$ is normalized such that $h(1) = 1$;
     \item For any $x \in \cO(G)$ it holds that $h(x^* x) \geq 0$.
 \end{itemize}
 \end{definition}

We can associate two reduced operator algebras to $G$ using the Haar functional in the obvious way.
The {\it reduced C$^\ast$-algebra} $C^r(G)$ is the C$^\ast$-algebra completion of $\cO(G)$ relative to the GNS representation induced by the Haar functional $h$, and the {\it von Neumann algebra of $G$}, $L^\infty(G)$, is the von Neumann algebra $C^r(G)''$ generated by $C^r(G)$.
If the Haar functional is a trace, we say that $G$ is of {\it Kac type}.
Note that in this case $L^\infty(G)$ is a finite von Neumann algebra, and we consider (the canonical normal extension of) $h$ to be the canonical trace on $L^\infty(G)$.

A {\it unitary representation} of $G$ on a finite dimensional Hilbert space $\cH$ is a unitary corepresentation of $\cO(G)$ on $\cH$.
It is well-known that the category of finite dimensional unitary representations of $G$, denoted Rep$(G)$, is a rigid C$^\ast$-tensor category.
A {\it right action} of $G$ is a right coaction of $\cO(G)$ on some $\ast$-algebra $A$.
More precisely, this is a $\ast$-homomorphism $\delta \colon A \to A \otimes \cO(G)$ such that $(\iota \otimes \Delta) \delta = (\delta \otimes \iota) \delta$ and $(\iota \otimes \varepsilon) \delta = \iota$.
Left actions are defined similarly.  Given such an action of $G$ on $A$, we can define the algebra of invariant elements $A^G$ as consisting of those $a \in A$ that satisfy $\delta(a) = a \otimes 1$.
If it happens that $A^G = \bC 1$, we say that the action is {\it ergodic}.
The action is called {\it free} if the linear map $A \otimes A \to A \otimes \cO(G)$ given by $a \otimes b \mapsto \delta(a)(b \otimes 1)$ is invertible.

There are many interesting examples of compact quantum groups.
To close this section we define the free unitary and free orthogonal quantum groups, and in the next section we discuss in detail the family of examples known as the quantum automorphism groups.

\begin{definition}[\cites{MR1382726,MR1378260}]
Let $n \geq 2$ be an integer and choose complex invertible matrices $Q$ and $F$ of size $n$ such that $Q$ is positive and $F \overline{F} \in \bR I_n$.
We define the following two universal $\ast$-algebras:
\begin{align*}
    \cO(U_Q^+) &= \bC \left\langle v_{ij} \mid 1 \leq i,j \leq n , ~ V = (v_{ij})_{ij} \text{ and } Q\overline{V}Q^{-1} \text{ are unitary} \right\rangle \\
    \cO(O_F^+) &= \bC \left\langle u_{ij} \mid 1 \leq i,j \leq n , ~ U = (u_{ij})_{ij} \text{ is unitary and } U = F\overline{U}F^{-1} \right\rangle .
\end{align*}
The Hopf $\ast$-algebra structure is then defined by
\begin{align*}
    \Delta(w_{ij}) = \sum_{k=1}^n w_{ik} \otimes w_{kj}, \quad (S \otimes \id)W = W^{^*}, \quad (\epsilon \otimes \id)W = 1_n.  
\end{align*}
where $W = [w_{ij}] \in \{V,U\}$.
The resulting compact quantum groups are called the {\it free unitary quantum group} $U_Q^+$ and the {\it free orthogonal quantum group} $O_F^+$, respectively.
\end{definition}

In terms of representation theory, both $U_Q^+$ and $O_F^+$ can be interpreted as the universal compact quantum groups given by defining unitary irreducible representations $V$ and $U$ respectively, with prescribed dual representations.
The matrices $Q$ and $F$ enter the picture to correct for the fact that the contragredient representation to a unitary representation is not automatically unitary in the case of quantum groups.
Instead, it is in general necessary to correct by conjugating by some matrix to obtain the dual representation.
Then $U_Q^+$ is the universal compact quantum group for which that conjugating matrix is precisely $Q$, and the same statement for $F$ holds for $O_F^+$ with the additional demand that its defining representation is self-dual.

If one makes the choice $Q = F = I_n$, it is customary to write $U_{I_n}^+ = U_n^+$ and $O_{I_n}^+ = O_n^+$.
$O_n^+$ and $U_n^+$ are always of Kac type. Moreover, their associated von Neumann algebras are II$_1$-factors and have been extensively studied (see for instance \cites{MR2355067,MR2995437,MR3084500,DeFrYa14,MR3455859,MR3391904,MR4211088}).
The only other choice (up to isomorphism) of $F$ that leads to a Kac type compact quantum group is $O_{J_{2m}}^+$, where $J_{2m}$ is the standard symplectic matrix of size $2m$.
We will denote $O_{J_{2m}}^+ = O_{2m}^{+J}$, as it can be realised as a graded twist of $O_{2m}^+$ \cite{MR3580173}.

\subsection{Quantum Automorphism Groups}

Following Wang, we consider the quantum  automorphism group of a finite measured quantum space $(B,\psi)$, as follows.

\begin{definition}[\cite{Wan98}]
Let $B$ be a finite-dimensional C$^\ast$-algebra $B$ equipped with a faithful state $\psi$.
The {\it quantum automorphism group} $\Aut^+(B,\psi)$ is the compact quantum group with Hopf $\ast$-algebra $\cO(\Aut^+(B,\psi))$ given by the universal unital $\ast$-algebra generated by the coefficients of a coaction 
\[
\rho\colon B \to \mathcal O(\Aut^+(B,\psi)) \otimes B,
\]
satisfying the $\psi$-invariance condition
\[
(\id \otimes \psi)\rho(x) = \psi(x)1 \qquad (x \in B).
\]
By {\it coefficients} of the coaction $\rho$ we mean the set $\{(\omega \otimes \id)\rho(x) \colon x \in B, \ \omega \in B^*\}$
\end{definition}

The Hopf $\ast$-algebra structure of $\cO(\Aut^+(B,\psi))$ is uniquely determined by the above requirements.
For example, the coproduct map
\[
\Delta\colon \cO(\Aut^+(B,\psi)) \to \cO(\Aut^+(B,\psi)) \otimes \cO(\Aut^+(B,\psi))
\]
can be computed from the coaction identity
\[
(\rho \otimes \id)\rho = (\id \otimes \Delta)\rho.
\] 

The quantum group $\Aut^+(B,\psi)$ can be regarded as a universal quantum analogue of the compact group of $\ast$-automorphisms $\Aut(B)$ of $B$.
More precisely, we call an automorphism $\alpha \in \Aut(B)$ {\it $\psi$-preserving} if $\psi \circ \alpha = \psi$.
Denoting the subgroup of all $\psi$-preserving automorphisms by $\Aut(B,\psi) < \Aut(B)$, one sees that the algebra of coordinate functions $\mathcal O(\Aut(B,\psi))$ is precisely the Abelianization of $\mathcal O (\Aut^+(B,\psi))$.

\begin{definition}
Let $B$ be a finite-dimensional C$^\ast$-algebra, and $\delta > 0$.
A \emph{$\delta$-form} on $B$ is a state on $B$ such that $m \circ m^* = \delta \id$, where $m^* \colon B \to B \otimes B$ is the adjoint of the multiplication map $m\colon B \otimes B \to B$ with respect to the Hermitian inner products associated with $\psi$.
\end{definition}

It suffices to understand the quantum automorphism groups $\Aut^+(B,\psi)$ with $\psi$ a $\delta$-form due to the following result, a proof of which can be found in \cite{DeFrYa14}.

\begin{proposition}
\label{P: general trace}
Let $B$ be a finite-dimensional C$^\ast$-algebra $B$ equipped with a faithful state $\psi$.
Fix an isomorphism $B=\bigoplus_{r=1}^m B_r$ corresponding to the coarsest direct sum decomposition with the property that every $B_r$ is a C$^\ast$-algebra and every restriction $\psi |_{B_r}$ becomes a $\delta_r$-form for some $\delta_r$.
Denote by $\psi_r$ the state obtained on $B_r$ by normalizing the restriction $\psi |_{B_r}$.
Then we have an isomorphism
\begin{align}
    \cO(\Aut^+(B,\psi)) \cong \ast_{r=1}^m \cO(\Aut^+(B_r,\psi_r))
\end{align}
of Hopf $\ast$-algebras.
\end{proposition}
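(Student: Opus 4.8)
The plan is to realize both Hopf $\ast$-algebras through the same universal property by constructing mutually inverse Hopf $\ast$-algebra morphisms
\[
\pi \colon \cO(\Aut^+(B,\psi)) \to \ast_{r=1}^m \cO(\Aut^+(B_r,\psi_r)), \qquad \iota \colon \ast_{r=1}^m \cO(\Aut^+(B_r,\psi_r)) \to \cO(\Aut^+(B,\psi)).
\]
For $\pi$, I would equip the free product $H := \ast_{r} \cO(\Aut^+(B_r,\psi_r))$ with the block-diagonal coaction $\sigma = \bigoplus_r (\nu_r \otimes \id)\rho_r$ on $B = \bigoplus_r B_r$, where $\rho_r$ is the universal coaction on $B_r$ and $\nu_r$ is the canonical inclusion of $\cO(\Aut^+(B_r,\psi_r))$ into the free product. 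Since $\psi = \sum_r \psi(1_{B_r})\,\psi_r \circ E_r$ with $E_r \colon B \to B_r$ the canonical projection, this coaction preserves $\psi$, and the universal property of $\cO(\Aut^+(B,\psi))$ yields $\pi$.

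The crux, and the step I expect to be the main obstacle, is the construction of $\iota$, which requires showing that the universal coaction $\rho \colon B \to \cO(\Aut^+(B,\psi)) \otimes B$ respects the decomposition, i.e.\ $\rho(B_r) \subseteq \cO(\Aut^+(B,\psi)) \otimes B_r$ for every $r$. The idea is that the decomposition is intrinsic to $(B,\psi)$: consider the positive operator $\Theta := m m^*$ on the GNS space $L^2(B,\psi)$, where $m \colon B \otimes B \to B$ is multiplication and $m^*$ its adjoint for the $\psi$-inner product. Because the summands $B_r$ are orthogonal ideals, $\Theta$ is block-diagonal and acts as a scalar on each $B_r$ (this is exactly the $\delta_r$-form condition, up to the normalization constant $\psi(1_{B_r})$). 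Moreover, coarseness of the decomposition forces these scalars to be pairwise distinct: any two blocks on which $\Theta$ agreed could be merged into a single block whose normalized restriction of $\psi$ is again a $\delta$-form, contradicting maximality. Hence the eigenspaces of $\Theta$ are precisely the $B_r$.

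To conclude that $\rho(B_r) \subseteq \cO(\Aut^+(B,\psi)) \otimes B_r$, I would argue categorically in $\Rep(\Aut^+(B,\psi))$: with $B$ carrying the unitary representation defined by $\rho$ on $L^2(B,\psi)$ (unitarity being precisely $\psi$-invariance), the multiplication $m$ is a morphism $B \otimes B \to B$, and since the representation is unitary its adjoint $m^*$ is a morphism as well. Thus $\Theta = m m^*$ is a self-adjoint endomorphism of $B$, so each of its spectral projections is a morphism; as these project onto the $B_r$, each $B_r$ is a subrepresentation, which is the desired intertwining relation. Restricting $\rho$ to $B_r$ then gives a $\psi_r$-preserving coaction of $\cO(\Aut^+(B,\psi))$ on $B_r$, and the universal property of $\cO(\Aut^+(B_r,\psi_r))$ produces morphisms $\iota_r$ that assemble, by the universal property of the free product, into $\iota$.

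Finally, I would check that $\pi$ and $\iota$ are mutually inverse by tracking the algebra generators. Since $\rho(B_r) \subseteq \cO(\Aut^+(B,\psi)) \otimes B_r$, the coefficients of $\rho$ decompose blockwise and generate $\cO(\Aut^+(B,\psi))$; these are exactly the images under $\iota_r$ of the generating coefficients of $\rho_r$, so $\iota$ is surjective, while $\pi \circ \iota_r = \nu_r$ together with $\iota \circ \pi = \id$ follows by evaluating both composites on these generators. As both maps are Hopf $\ast$-algebra morphisms by construction, this gives the claimed isomorphism.
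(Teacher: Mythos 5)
Your argument is correct. Note that the paper does not prove this proposition itself but defers to \cite{DeFrYa14}; your route --- showing that $\Theta = mm^*$ is a self-adjoint intertwiner of the unitary corepresentation on $L^2(B,\psi)$ whose eigenspaces are exactly the $B_r$ (distinctness of the eigenvalues $\delta_r^2/\psi(1_{B_r})$ being forced by coarseness via the merging argument), so that each $B_r$ is a subrepresentation, and then matching universal properties --- is essentially the standard argument found there. Two small points you should make explicit: first, that $\rho|_{B_r}$ is \emph{unital} as a map into $\cO(\Aut^+(B,\psi))\otimes B_r$, i.e.\ $\rho(1_{B_r}) = 1\otimes 1_{B_r}$, which follows since the $\rho(1_{B_r})$ are orthogonal projections in the corners $\cO\otimes B_r$ summing to $1\otimes 1_B$; second, that unitarity of the corepresentation on $L^2(B,\psi)$ uses not only $\psi$-invariance but also the $\ast$-homomorphism property of $\rho$ and invertibility coming from the Hopf algebra structure (a standard fact, but ``unitarity being precisely $\psi$-invariance'' is slightly too glib). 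Neither affects the validity of the proof.
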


\begin{example}
Let $B$ be a finite-dimensional C$^\ast$-algebra, and fix an isomorphism  $B=\bigoplus_{r=1}^m M_{n_r}$.
Its \emph{Plancherel trace} is the tracial state defined by
\[
\psi(A)=\sum_{r=1}^m \frac{n_r}{\dim(B)} \Tr_{n_r}(A_r) \quad (A=\bigoplus_{r=1}^m A_r \in B, \, A_r \in M_{n_r}
).
\]
This is the unique tracial $\delta$-form on $B$, and we have $\delta = \sqrt{\dim B}$.
\end{example}

Since the Plancherel trace $\psi$ is always an invariant state for the action of $\Aut(B)$ on $B$, we have $\Aut(B,\psi) = \Aut(B)$, so $\Aut^+(B,\psi)$ can truly be regarded as the quantum analogue of $\Aut(B)$.
With this in mind, we shall often suppress the $\psi$-dependence in our notation and simply write $\Aut^+(B) = \Aut^+(B,\psi)$ for the remainder of the paper, understanding that the Plancherel trace is used unless specified otherwise.

\subsection{Monoidal equivalence and the linking algebra}

Let $G_1$ and $G_2$ be \emph{monoidally equivalent} compact quantum groups, that is, suppose that there is a unitary monoidal equivalence of C$^\ast$-tensor categories $F \colon \Rep(G_1) \to \Rep(G_2)$.
Such a situation is captured by the associated \emph{linking algebra}, or \emph{Hopf--Galois object} \cites{bichon-lect-note-gal-ext-cpt-qgrp,MR2202309}, which we denote by $\cO(G_1,G_2)$.

This is a unital $\ast$-algebra equipped with a pair of commuting free ergodic coactions
\begin{align*}
\delta_1\colon \cO(G_1,G_2) &\to \cO(G_1) \otimes \cO(G_1,G_2),&
\delta_2\colon \cO(G_1,G_2) &\to \cO(G_1,G_2) \otimes \cO(G_2).
\end{align*}
We denote by $\omega_{12}\colon \cO(G_1,G_2)  \to \bC$ the unique faithful invariant state, which is characterized by
\[
\omega_{12}(x)1 = (h_1 \otimes \id) \delta_1(x) = (\id \otimes h_2)\delta_2(x) \qquad (x \in \cO(G_1,G_2)).
\]
More concretely, $\cO(G_1, G_2)$ has a linear basis of the form $(x_{ij}^\pi)_{\pi, i, j}$ where $\pi$ runs over $\Irr(G_1)$, $i$ runs over an index set for an orthonormal basis of $H_{\pi}$, and $j$ runs over such a set for $H_{F(\pi)}$, with the convention $x^\pi_{i j} = 1$ when $\pi$ is the trivial representation. Note that each (possibly rectangular) matrix $X^\pi = [x_{ij}^\pi] \in M_{d(\pi), d(F(\pi))} \otimes \cO(G_1, G_2)$ is unitary.
Then we can present the above maps as
\begin{align*}
\delta_1(x^\pi_{i j}) &=  \sum_{k = 1}^{\dim(\pi)} u^\pi_{i k} \otimes x^\pi_{k j},&
\delta_2(x^\pi_{i j}) &=  \sum_{k = 1}^{\dim(F(\pi))} x^\pi_{i k} \otimes u^{F(\pi)}_{k j},&
\omega_{12}(x^\pi_{i j}) &= \delta_{1,\pi}.
\end{align*}
Here, the $u^\pi_{ij}$ are the matrix coefficients of the representation $\pi$.
As the $\delta_i$ are $\ast$-homomorphisms, the algebra structure is also determined by this.

Exchanging the role of $G_1$ and $G_2$, we get another linking algebra $\cO(G_2,G_1)$ and invariant state $\omega_{21}$.
There is a canonical isomorphism $\cO(G_2,G_1) \cong \cO(G_1,G_2)^{\opo}$ given by $y_{ji}^\pi \mapsto (x_{ij}^\pi)^*$, where the matrix coefficients $y^\pi_{j i}$ are defined analogously to $x^\pi_{i j}$. Moreover, there exists a unital $\ast$-homomorphism $\theta_1 \colon \cO(G_1) \to \cO(G_1,G_2) \otimes \cO(G_2,G_1)$ defined on matrix elements of unitary representations $u_{ij}^\pi \in \cO(G_1)$ by  
\[
\theta_1(u^\pi_{ij}) = \sum_{k = 1}^{\dim(F(\pi))} x_{ik}^\pi \otimes y_{kj}^\pi.
\]
Note that $\theta_1$ is state preserving, in the sense that 
\[
h_1(x)1 = (\omega_{12} \otimes \id) \theta_1(x) = (\id \otimes \omega_{21}) \theta_1(x) \qquad (x \in \cO(G_1)).
\]

Let $G$ be a compact quantum group and denote the dual $\ast$-algebra of $\cO (G)$ by $\cU(G)$ (the $\ast$-structure is  defined $\omega^*(a) = \omega(S(a)^*)$ for $\omega \in \mathcal U(G)$ and $a \in \cO(G)$).  We write  $\cU(G \times G)$ for the dual of $\cO(G) \otimes \cO(G)$. The ``coproduct'' map $\hat \Delta: \cU(G) \to \cU(G \times G)$ is defined to be dual of
the product map $\cO(G) \otimes \cO(G) \to \cO(G)$.  See \cite{MR3204665} for more details.  In the following, we are mostly interested in monoidal equivalences given by unitary dual $2$-cocycles in the following sense.

\begin{definition}
A \emph{unitary dual $2$-cocycle} for $G$ is given by a unitary element $\sigma \in \cU(G \times G)$ such that
\[
\sigma_{1, 2} \sigma_{1 2, 3} = \sigma_{2, 3} \sigma_{1, 2 3}.
\]
\end{definition}

Here, $\sigma_{1 2, 3}$ denotes $(\hat\Delta \otimes \id)(\sigma) \in \cU(G \times G \times G)$.
Without losing generality we can assume the normalization condition
\[
(\epsilon \otimes \id)(\sigma) = 1 = (\id \otimes \epsilon)(\sigma)
\]
for the trivial representation $\epsilon \colon \cU(G) \to \bC$, which we always do.
Alternatively, we can interpret $\sigma$ as a bilinear form $\cO(G) \times \cO(G) \to \bC$ satisfying
\begin{align*}
\sigma(a_{(1)}, a'_{(1)}) \sigma(a_{(2)} a'_{(2)}, a'') &= \sigma(a'_{(1)}, a''_{(1)}) \sigma(a, a'_{(2)} a''_{(2)}),&
\sigma(1, a) = \epsilon(a) = \sigma(a, 1),
\end{align*}
and unitarity for the convolution algebra structure.

Given such $\sigma$ and a left $\cO(G)$-comodule $\ast$-algebra $B$, we can twist the product of $B$ to a new associative product
\[
b^1 \tensor[_\sigma]{\cdot}{} b^2 = \sigma(b^1_{(1)}, b^2_{(1)}) b^1_{(2)} b^2_{(2)},
\]
where we denote the coaction as
\[
B \to \cO(G) \otimes B, \quad b \mapsto b_{(1)} \otimes b_{(2)}.
\]
With the new involution (see, e.g.,~\cite{MR3194750}) given by
\[
b^\sharp = \overline{\sigma(S^{-1}(b_{(2)}), b_{(1)})} b_{(3)}^* = \sigma^*(b_{(2)}^*, S^{-1}(b_{(1)}^*)) b_{(3)}^*,
\]
we obtain a $\ast$-algebra $(B, \tensor[_\sigma]{\cdot}{}, \sharp)$, which we denote by $\tensor[_\sigma]{B}{}$.
This cocycle deformation is compatible with C$^\ast$-structures: if $B$ is a C$^\ast$-algebra and $B \to B \otimes C^u(G)$ is a C$^\ast$-algebraic coaction and $\sigma$ is as above, then a similar construction can be carried through, resulting in a C$^\ast$-algebra $_\sigma B$.  See \cite{NT-actions} for details.  In the particular case of a finite-dimensional C$^\ast$-algebra $B$ of interest to us, the resulting algebraic construction and C$^\ast$-algebraic construction coincide.  
We also note that if $\psi$ is a $G$-invariant state on $B$, then $\psi$ remains a state when viewed as a functional on $_\sigma B$.   

Similarly, when $B$ has a right coaction of $\cO(G)$, we can define a new associative product by
\[
f^1 \cdot_{\sigma^{-1}} f^2 =  \sigma^{-1}(f^1_{(2)}, f^2_{(2)}) f^1_{(1)} f^2_{(1)},
\]
and a compatible $\ast$-structure.
We denote this $\ast$-algebra by $B_{\sigma^{-1}}$.

\begin{definition} \label{CQG-twist} [cf.~\cite{MR1213985}]
With $G$ and $\sigma$ as above, we denote by $\cO(G)^\sigma$ the Hopf $\ast$-algebra with the underlying coalgebra $\cO(G)$ and algebra $\tensor[_\sigma]{\cO(G)}{_{\sigma^{-1}}}$.
We also write $G^\sigma$ for the compact quantum group represented by $\cO(G)^\sigma$.
\end{definition}

Recall that $G^{\sigma}$ can be directly characterized in terms of the structure of $\cU(G^\sigma)$: as an algebra it is the same as $\cU(G)$, but the coproduct is given by $\Delta_\sigma(T) = \sigma \Delta(T) \sigma^{-1}$.
There is a unitary monoidal equivalence $F\colon \Rep(G) \to \Rep(G^\sigma)$, whose underlying C$^\ast$-functor is the identity functor and the tensor transform $F(\pi) \otimes F(\pi') \to F(\pi \otimes \pi')$ is given by the action of $(\pi \otimes \pi')(\sigma^{-1})$ on $H_\pi \otimes H_{\pi'}$.

\begin{remark}\label{rmk-dim-pres}
Up to isomorphism, any unitary monoidal equivalence $F \colon \Rep(G) \to \Rep(G')$ satisfying $\dim H_\pi = \dim H_{F(\pi)}$ is of this form.   See \cite[Section 4]{MR2202309}
\end{remark}

The linking algebra $\cO(G^\sigma, G)$ is given by $\tensor[_\sigma]{\cO(G)}{}$.
More generally, if $B$ is any unital $\ast$-algebra and $B \to \cO(G) \otimes B$ is a coaction, then this same linear map defines a coaction $\tensor[_\sigma]{B}{} \to \cO(G^\sigma) \otimes \tensor[_\sigma]{B}{}$.  See \cite[Proposition 4.11]{MR2202309} and \cite{MR1213985}.  

\begin{remark}\label{rem:comparison-of-twists}
When $B$ is a finite dimensional C$^*$-algebra endowed with a left $\cO(G)$-comodule structure and a $G$-invariant state $\psi$, the twisting $\tensor[_\sigma]{B}{}$ comes from the above monoidal equivalence $F$ up to an isomorphism.
To be more precise, let $R$ be the unitary antipode on $\cO(G)$.
Then the opposite algebra $B^{\opo} = \{b^\opo \mid b \in B\}$ admits a right $\cO(G)$-comodule $*$-algebra structure given by $b^\opo \mapsto b_{(2)}^\opo \otimes R(b_{(1)})$.
Together with the GNS inner product for the invariant state $\psi'(b^\opo) = \psi(b)$, we get an object of $\Rep(G)$ represented by $B^\opo$.
The corresponding $\Rep(G^\sigma)$-algebra has the product
\[
b^{1\opo} \tensor[_\sigma]{\cdot}{} b^{2\opo} = \sigma^{-1}(R(b^1_{(1)}), R(b^2_{(2)})) (b^{2\opo}_{(2)} b^{1\opo}_{(2)})^{\opo} \quad (b^1, b^2 \in B).
\]
In other words, the corresponding left $\cO(G)$-comodule algebra is $B$ with the twisted product
\[
b^1 \tensor[_\sigma]{\cdot}{^\prime} b^2 = \sigma^{-1}(R(b^2_{(1)}), R(b^1_{(2)})) b^1_{(2)} b^2_{(2)} = (\hat R \otimes \hat R)(\sigma^{-1})_{2 1} (b^1_{(1)}, b^2_{(1)}) b^1_{(2)} b^2_{(2)}
\]
for the unitary antipode $\hat R$ of $\cU(G)$.
By \cite{MR2806547}*{Proposition 5.3}, $\sigma$ and $(\hat R \otimes \hat R)(\sigma^{-1})_{2 1}$ are cohomologous, hence the C$^*$-algebra $(B, \tensor[_\sigma]{\cdot}{^\prime})$ is isomorphic to $\tensor[_\sigma]{B}{}$.
\end{remark}

Turning to the quantum automorphism groups, they obey the same fusion rules as $\SO(3) = \Aut(M_2)$.
In fact, these compact quantum groups have the following rigidity properties.

\begin{theorem}[\citelist{\cite{MR1709109}\cite{MR3240820}}]\label{thm:banica-mrozinski}
The compact quantum groups $\Aut^+(B, \psi)$ for a finite-dimensional C$^\ast$-algebra $B$ and a $\delta$-form $\psi$ have the same fusion rules as $\SO(3)$.
Conversely, when $G$ is a compact quantum group with the same fusion rules as $\SO(3)$, there is such $B$ and $\psi$ satisfying $G \cong \Aut^+(B, \psi)$.
\end{theorem}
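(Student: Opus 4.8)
The plan is to handle both implications through Woronowicz's Tannaka--Krein duality, by identifying $\Rep(\Aut^+(B,\psi))$ with the rigid C$^\ast$-tensor category generated by a single special symmetric Frobenius algebra object (a $Q$-system). Throughout, write $v$ for the fundamental representation on $B$ and decompose $v = V_0 \oplus V_1$ as an object of $\Rep(\Aut^+(B,\psi))$, where $V_0$ is the trivial representation spanned by the unit $1_B$ and $V_1$ is its orthogonal complement with respect to the GNS inner product of $\psi$.

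For the forward direction, I would first note that the defining coaction $\rho$ being a \emph{unital} $\ast$-homomorphism is exactly the statement that the multiplication $m \colon B \otimes B \to B$ and the unit $\eta \colon V_0 \to B$ are morphisms in $\Rep(\Aut^+(B,\psi))$. As the category is a C$^\ast$-category, the adjoints $m^\ast$ and $\eta^\ast$ are morphisms too, and the $\delta$-form condition reads $m m^\ast = \delta\,\id_B$; hence $v$ is a special symmetric Frobenius algebra of categorical dimension $\delta$. By universality of $\cO(\Aut^+(B,\psi))$ together with Tannaka--Krein, the category is generated as a C$^\ast$-tensor category by $v$ and these structure morphisms, so each $\operatorname{Hom}(v^{\otimes k}, v^{\otimes \ell})$ is spanned by the planar diagrams assembled from $m, \eta, m^\ast, \eta^\ast$. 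An explicit Temperley--Lieb type count then shows that $\dim \operatorname{Hom}(V_0, V_1^{\otimes k})$ matches the $\SO(3)$ multiplicity; in particular $V_1$ is irreducible, and an inductive decomposition of $V_1^{\otimes k}$ produces irreducibles $V_0, V_1, V_2, \dots$ obeying the Clebsch--Gordan rule $V_i \otimes V_j \cong \bigoplus_{s=|i-j|}^{i+j} V_s$, which is precisely the fusion ring of $\SO(3)$.

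For the converse, suppose $G$ has the fusion rules of $\SO(3)$, with generating self-dual irreducible $V_1$. I would reconstruct $(B,\psi)$ by building a special symmetric Frobenius algebra on the object $A = V_0 \oplus V_1$ in $\Rep(G)$: the unit is the inclusion of $V_0$, while the multiplication $A \otimes A \to A$ is assembled from the one-dimensional spaces $\operatorname{Hom}(V_1, V_1 \otimes V_1)$ and $\operatorname{Hom}(V_0, V_1 \otimes V_1)$ provided by the fusion rules, with structure constants fixed by unitality and the special Frobenius normalization. Granting associativity and positivity, the theory of $Q$-systems realizes $A$ as a genuine finite-dimensional C$^\ast$-algebra $B$ with a $G$-coaction, the categorical trace giving a $\delta$-form $\psi$. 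Since $V_1$ generates $\Rep(G)$ and $V_1 \subset B$, the coaction coefficients generate $\cO(G)$, so universality yields a surjection $\cO(\Aut^+(B,\psi)) \twoheadrightarrow \cO(G)$. Both Hopf $\ast$-algebras have $\SO(3)$ fusion rules with the same fundamental object, hence all Hom-spaces between tensor powers of $V_1$ have equal (finite) dimension; as restriction of morphisms along the surjection is injective, it is bijective, the induced monoidal functor is an equivalence, and Tannaka--Krein upgrades the surjection to the isomorphism $G \cong \Aut^+(B,\psi)$.

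The main obstacle is the reconstruction step: showing that the candidate multiplication on $A = V_0 \oplus V_1$ is associative and defines an honest C$^\ast$-algebra. Associativity is a consistency condition on the $6j$-symbols of the $\SO(3)$-type category, and one must separately verify positivity so that the Frobenius algebra is a true direct sum of matrix algebras rather than a formal object. A clean way to organize this is to use that any rigid C$^\ast$-tensor category with $\SO(3)$ fusion rules is determined up to monoidal equivalence by the quantum dimension of $V_1$, realize that dimension by a Temperley--Lieb category $\Rep(O_F^+)$ and its even (projective) part $\Rep(PO_F^+)$, and transport the Frobenius structure from the concrete model $\Aut^+(B,\psi)$; the subtlety is that this route only delivers a monoidal equivalence of categories, so pinning $G$ down as an actual quantum automorphism group still requires the universality-and-dimension-count argument above.
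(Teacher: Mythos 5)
The paper offers no proof of this statement: it is quoted from Banica \cite{MR1709109} (the fusion rules of $\Aut^+(B,\psi)$) and Mrozinski \cite{MR3240820} (the converse classification), so there is no internal argument to compare against. Your sketch follows the same broad strategy as those references --- Tannaka--Krein duality together with the observation that $(m,\eta)$ and their adjoints make the fundamental object a special symmetric Frobenius algebra, reducing the forward direction to a Temperley--Lieb diagram count and the converse to reconstructing $(B,\psi)$ from the category --- and the closing step (a surjection $\cO(\Aut^+(B,\psi)) \twoheadrightarrow \cO(G)$ of Hopf $\ast$-algebras with identical fusion rules must be injective) is correct; it is the same device the paper uses in Remark \ref{rem:from-referee}.

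Two genuine gaps remain. First, in the forward direction the passage from ``$\operatorname{Hom}(v^{\otimes k}, v^{\otimes \ell})$ is spanned by planar diagrams'' to the exact $\SO(3)$ multiplicities requires the diagrams to be linearly independent, which holds only for $\delta \ge 2$, i.e.\ $\dim B \ge 4$; for $\dim B \le 3$ the statement is simply false ($S_2^+ = \bZ_2$, $S_3^+ = S_3$ have finitely many irreducibles), so this hypothesis --- suppressed in the paper's statement as well --- must be made explicit and actually used in the independence argument. Second, and more seriously, the converse rests entirely on producing an associative, \emph{positive} C$^\ast$-Frobenius algebra structure on $V_0 \oplus V_1$, which you correctly identify as the main obstacle but do not resolve: the candidate multiplication lives in a five-dimensional Hom-space ($A \otimes A \cong 2V_0 \oplus 3V_1 \oplus V_2$), and unitality, associativity, specialness and positivity form a nontrivial system of constraints on the $6j$-symbols whose solvability is the real content of Mrozinski's theorem. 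Your proposed fallback --- that a rigid C$^\ast$-tensor category with $\SO(3)$ fusion rules is determined up to monoidal equivalence by $\dim_q V_1$ and can be modelled on the even part of a Temperley--Lieb category $\Rep(O_F^+)$ --- is essentially the combined content of Theorems \ref{thm:banica-mrozinski} and \ref{thm:derijdt-vandervennet}, so invoking it here begs the question. As written, the proposal is an accurate roadmap of the cited proofs rather than a self-contained argument.
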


Note that if $G$ compact quantum group that has the fusion rules of $SO(3)$, $G = \Aut^+(B, \psi)$ where $B$ is, as a representation of $G$, represented by the direct sum of the trivial representation and the one corresponding to the irreducible $3$-dimensional representation of $SO(3)$. 

\begin{theorem}[\cite{DeVa10}]\label{thm:derijdt-vandervennet}
Let $(B_i, \psi_i)$ be finite-dimensional C$^\ast$-algebras with $\delta_i$-forms $\psi_i$, for $i=1,2$.
Then the compact quantum groups $\Aut^+(B_i, \psi_i)$ are monoidally equivalent if and only if $\delta_1 = \delta_2$.
\end{theorem}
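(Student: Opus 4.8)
The plan is to pass to representation categories and isolate a single numerical invariant, the quantum dimension of the generating object, which will turn out to be a strictly monotone function of $\delta$. Since monoidal equivalence of the quantum groups means, by definition, a unitary monoidal equivalence $\Rep(\Aut^+(B_1,\psi_1)) \to \Rep(\Aut^+(B_2,\psi_2))$, I work entirely at the categorical level. By Theorem~\ref{thm:banica-mrozinski} both categories are rigid C$^\ast$-tensor categories with the fusion rules of $\SO(3)$; label their irreducibles $\epsilon = u_0, u_1, u_2, \dots$ so that $u_1$ is the nontrivial summand in the decomposition $B_i \cong \epsilon \oplus u_1$ of the defining comodule. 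The crucial computation is that, using the Frobenius structure $(m, m^\ast, \eta, \psi)$ carried by $B_i$ as an object of its category together with the defining relation $m m^\ast = \delta_i \id$, the generating object has quantum dimension $\dim_q(u_1) = \delta_i^2 - 1$.

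For the ``only if'' direction I would use that quantum dimension is invariant under unitary monoidal equivalences and that any such equivalence induces an isomorphism of fusion rings. The $\SO(3)$ fusion ring has no nontrivial automorphism fixing $\epsilon$ and respecting $u_1 \otimes u_1 \cong \epsilon \oplus u_1 \oplus u_2$, so an equivalence necessarily sends $u_1$ to $u_1$. Comparing quantum dimensions then yields $\delta_1^2 - 1 = \delta_2^2 - 1$, and since $\delta_i > 0$ and $t \mapsto t^2$ is injective on the positive reals, $\delta_1 = \delta_2$.

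For the ``if'' direction, set $\delta = \delta_1 = \delta_2$. Here the content is a rigidity statement: a rigid C$^\ast$-tensor category with $\SO(3)$ fusion rules is determined up to unitary monoidal equivalence by the single number $\delta$. I would prove this by realizing each $\Rep(\Aut^+(B_i,\psi_i))$ as the ``even part'' of $\Rep(O_{F_i}^+)$ for a matrix $F_i$ normalized so that $\Tr(F_i^\ast F_i) = \delta$; concretely $u_1$ is the spin-one object $u^{\otimes 2} \ominus \epsilon$, where $u$ is the fundamental representation of $O_{F_i}^+$. The intertwiner spaces of $O_{F_i}^+$ are spanned by Temperley--Lieb diagrams in which the only free parameter is the closed-loop value $\Tr(F_i^\ast F_i) = \delta$, so the two full categories, and hence their even parts, coincide. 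Equivalently one may present the morphism spaces $\operatorname{Hom}(B_i^{\otimes k}, B_i^{\otimes l})$ by planar diagrams built from $m, m^\ast, \eta, \psi$ modulo the associativity, Frobenius, and $m m^\ast = \delta\id$ relations, all of which see the initial data only through $\delta$. Since the fusion rules force all objects to have matching dimensions on the two sides, Remark~\ref{rmk-dim-pres} shows the resulting equivalence can even be taken to be a cocycle twist.

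The main obstacle is precisely the rigidity claim in the last paragraph: one must show that the diagrammatic relations generate all intertwiners and that no modulus beyond $\delta$ survives in the associativity data. This is the heart of the argument, handled either by the Temperley--Lieb/Jones--Wenzl calculus or, as in the cited reference, by explicitly constructing the linking algebra from a full-multiplicity ergodic action whose spectral data depends on $B_i$ only through $\delta$.
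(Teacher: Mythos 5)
The paper itself offers no proof of this statement: it is imported verbatim from \cite{DeVa10}, so the only comparison available is with the strategy of that reference. Judged on its own terms, your ``only if'' direction is essentially complete and correct: intrinsic (quantum) dimension is an invariant of unitary monoidal equivalence, the $\SO(3)$ fusion ring admits no nontrivial automorphism (indeed $u_1$ is the unique nontrivial irreducible whose square has exactly three irreducible constituents), and $\dim_q(u_1)$ is a strictly increasing function of $\delta$. (One small point: with the normalization $m m^\ast = \delta^2\,\id$ used in \cite{DeVa10} one gets $\dim_q(u_1)=\delta^2-1$; the paper's definition as literally printed, $m m^\ast=\delta\,\id$ together with $\delta=\sqrt{\dim B}$ for the Plancherel trace, is internally inconsistent, so your formula is right under the standard convention and the monotonicity is all you need.) Your ``if'' direction correctly isolates the crux, namely that $\operatorname{Hom}(B^{\otimes k},B^{\otimes l})$ is spanned by the planar diagrams generated by $m,m^\ast,\eta,\psi$ subject to relations depending only on $\delta$, but you do not prove it. That statement is exactly Banica's theorem underlying Theorem \ref{thm:banica-mrozinski}, and it is also the engine of the argument in \cite{DeVa10}, which packages the same information concretely as the nontriviality of the universal linking algebra, i.e.\ an ergodic action of full quantum multiplicity. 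As a reduction to a citable result this is acceptable, though for $\delta^2<4$ one should also observe that the linear dependences among the spanning diagrams are forced by positivity and again depend only on $\delta$ (these cases are classical and degenerate anyway).

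There is, however, one genuine error in a side claim: your assertion that ``the fusion rules force all objects to have matching dimensions on the two sides,'' so that Remark \ref{rmk-dim-pres} upgrades the equivalence to a cocycle twist, is false for general $\delta$-forms. The classical dimensions $\dim H_\pi$ are data of the fiber functor, not of the fusion ring. For example, $\Aut^+(\bC^5)$ and $\Aut^+(M_2,\psi)$ for a non-tracial $\delta$-form with $\delta^2=\Tr(Q^{-1})=5$ are monoidally equivalent, yet their fundamental comodules have dimensions $5$ and $4$, so the equivalence is not dimension-preserving and is not a cocycle twist. This does not affect the theorem as stated (which asserts only monoidal equivalence), but the remark should be deleted or restricted to the tracial case $\dim B_1=\dim B_2$, which is precisely the setting in which the paper invokes the cocycle-twist picture in Theorem \ref{thm:cocycle-twist-rel}.
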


In this case, the linking algebra can be characterized as the universal $\ast$-algebra generated by the coefficients of a unital $\ast$-homomorphism 
\[
\rho\colon B_2 \to B_1 \otimes \cO(\Aut^+(B_1,\psi_1), \Aut^+(B_2,\psi_2))
\]
satisfying the $\psi_2$-$\psi_1$-invariance condition
\[
(\psi_1\otimes 1)\rho(x) = \psi_2(x)1 \qquad (x \in B_2).
\]
In fact, the nontriviality of this universal algebra characterizes the existence of monoidal equivalence between $\Aut^+(B_i, \psi_i)$.
Note that if $\psi_i$ are the respective Plancherel traces, we have $\delta_i = \sqrt {\dim B_i}$, and therefore the corresponding quantum groups $\Aut^+(B_i)$ (of interest to us in this paper) are monoidally equivalent if and only if $\dim B_1 = \dim B_2$.

\section{Isomorphisms from \texorpdfstring{$2$}{2}-cocycle Deformations}
\label{sec:IsoFromCocycleDeformations}

In this section we establish our first main result of the paper--that every quantum automorphism group $\Aut^+(B)$ arises as a cocycle twist of a quantum permutation group $S_N^+$.
The special case when $B = M_n$ was established in \cite{BaBiCu11}.
The general case is a straightforward generalization of the arguments there.
We then apply this structure result to establish our first crossed product isomorphisms. 

Given natural numbers $n_1, \dots, n_m$, put $Y_r = \{0 \le i < n_r\}$, $X_r = Y_r \times Y_r$, and $X = X_1 \coprod \dots \coprod X_m$.
We also write $N = \abs{X} = \sum_r n_r^2$.
Note that each $X_r$ has a free transitive action of $\Gamma_r = \bZ_{n_r} \times \bZ_{n_r}$.
This induces an embedding
\[
\Gamma = \Gamma_1 \times \dots \times \Gamma_m < \Sym(X) \cong S_N.
\]
Recall that $\hat \Gamma_t$ has a $\bT$-valued nondegenerate $2$-cocycle:
up to a choice of identification $\hat \Gamma_t \cong \Gamma_t$ and difference by coboundary, it can be written as
\[
\omega'_t([j_1, j_2], [k_1, k_2]) = \exp\mathopen{}\left(\frac{2 \pi i}{n_t} j_1 k_2\right).
\]
Let $\omega_t$ be a $2$-cocycle cohomologous to $\omega'_t$ satisfying $\omega_t(h, h^{-1}) = 1$.
Concretely, we can choose $\psi(h) \in \bT$ such that $\psi(h)^{-2} = \omega'_t(h, h^{-1})$, and take $\omega_t = \omega'_t \partial \psi$.

Then the product $\omega_1 \times \dots \times \omega_m$ is a (nondegenerate) $2$-cocycle on $\hat \Gamma$, which can be presented as a convolution invertible map $\sigma_0 \colon \cO(\Gamma) \times \cO(\Gamma) \to \bC$, with the additional normalization
\begin{equation}\label{eq:cocycle-normaliszation-2}
\sigma_0(g, g^{-1}) = 1 \quad (g \in \Gamma).
\end{equation}

Composing the restriction maps $\cO(S_{N}^+) \to \cO(S_N) \to \cO(\Gamma)$, we obtain a convolution invertible map
\[
\sigma \colon \cO(S_N^+) \otimes \cO(S_N^+) \to \bC.
\]
This shall be the cocycle of interest in the sequel.

\begin{theorem}
\label{thm:cocycle-twist-rel}
We have $\cO(S_N^+)^\sigma \cong \cO(\Aut^+(\bigoplus_{i=1}^m M_{n_r}))$.
\end{theorem}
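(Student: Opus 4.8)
The plan is to realize $B$ as the cocycle twist of the defining comodule algebra of $S_N^+$ and then invoke the universal property defining $\cO(\Aut^+(B))$; this generalizes the $B = M_n$ case of \cite{BaBiCu11}. First I would recall that $S_N^+ = \Aut^+(\bC^X)$ for the uniform state $\psi_{\mathrm{unif}}$ (the Plancherel trace of $\bC^N$), where $\bC^X$ carries the defining coaction $\alpha\colon \bC^X \to \cO(S_N^+) \otimes \bC^X$. Restricting $\alpha$ along $\cO(S_N^+) \to \cO(\Gamma)$ makes $\bC^X = \bigoplus_{r=1}^m \bC^{X_r}$ a $\cO(\Gamma)$-comodule algebra; since $X_r$ is a $\Gamma_r$-torsor, a choice of basepoint identifies $\bC^{X_r} \cong \cO(\Gamma_r)$ with the left regular comodule algebra, the remaining factors $\Gamma_s$ ($s \neq r$) coacting trivially. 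Because $\sigma = \omega_1 \times \dots \times \omega_m$ and each $\omega_s$ is normalized to be trivial on the unit, the twisted product on the $r$-th block sees only $\omega_r$, so the blocks decouple.

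The key computation is the identification $\tensor[_{\omega_r}]{\cO(\Gamma_r)}{} \cong M_{n_r}$ of $\ast$-algebras. Under Pontryagin duality $\cO(\Gamma_r) \cong \bC[\hat\Gamma_r]$, the cocycle-twisted product becomes the product of the twisted group algebra $\bC_{\omega_r}[\hat\Gamma_r]$. Writing $\hat\Gamma_r \cong \bZ_{n_r} \times \bZ_{n_r}$ and recalling that $\omega_r$ is cohomologous to the nondegenerate cocycle $\omega'_r([j_1,j_2],[k_1,k_2]) = \exp(2\pi i\, j_1 k_2 / n_r)$, the two canonical generators $U,V$ satisfy the Weyl relation $UV = \exp(2\pi i/n_r)\,VU$ with $U^{n_r} = V^{n_r} = 1$; this is exactly the clock-and-shift presentation of $M_{n_r}$. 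Checking that the twisted involution $\sharp$ matches the matrix $\ast$-structure then gives $\tensor[_\sigma]{\bC^X}{} \cong \bigoplus_r M_{n_r} = B$ as C$^\ast$-algebras. Tracking the invariant state through this identification (the Haar state of $\cO(\Gamma_r)$ becomes the normalized trace of $M_{n_r}$, carrying block weight $n_r^2/N$), one sees the induced state is precisely the Plancherel $\delta$-form on $B$.

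With this in hand I would transport the coaction: the same linear map underlying $\alpha$ defines a coaction $\rho\colon B = \tensor[_\sigma]{\bC^X}{} \to \cO(S_N^+)^\sigma \otimes B$ by \cite{MR2202309}*{Proposition 4.11} and \cite{MR1213985}, and $\rho$ preserves the Plancherel trace. Since the coefficients of $\alpha$ generate $\cO(S_N^+)$ and $\sigma$ is convolution-invertible, these same coefficients still generate the twisted algebra $\cO(S_N^+)^\sigma$; hence the universal property defining $\cO(\Aut^+(B))$ yields a surjective Hopf $\ast$-algebra morphism $\cO(\Aut^+(B)) \twoheadrightarrow \cO(S_N^+)^\sigma$. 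To upgrade this to an isomorphism I would compare representation categories: both quantum groups are monoidally equivalent to $S_N^+$ by Theorem \ref{thm:derijdt-vandervennet} (both correspond to $\delta = \sqrt N$), so their irreducibles are indexed by $\bN$ with identical dimensions. As the surjection carries the defining corepresentation $u_0 \oplus u_1$ to the defining corepresentation of $\cO(S_N^+)^\sigma$, it sends the generating irreducible $u_1$ to the generating irreducible, hence preserves dimensions of all irreducibles; this forces the induced tensor functor to be fully faithful and the morphism injective.

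The main obstacle I expect is the careful bookkeeping in the key computation $\tensor[_{\omega_r}]{\cO(\Gamma_r)}{} \cong M_{n_r}$: one must pin down the passage between the cocycle $\omega_r$ on $\hat\Gamma_r$ and the comodule-algebra twist, verify that the twisted involution $\sharp$ really is the adjoint on $M_{n_r}$, and confirm the normalizations (both $\omega_r(h,h^{-1}) = 1$ and triviality on the unit) so that the blocks genuinely decouple and the induced state equals the Plancherel $\delta$-form \emph{exactly} rather than up to an unwanted scaling. The representation-theoretic injectivity step is comparatively routine once the monoidal equivalence and the matching of fundamental corepresentations are established.
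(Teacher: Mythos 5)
Your proposal is correct, but it reverses the logical flow of the paper's own argument, so a comparison is in order. The paper starts from $\cO(S_N^+)^\sigma$ and invokes the classification of compact quantum groups with $\SO(3)$ fusion rules (Theorem \ref{thm:banica-mrozinski}) to conclude abstractly that $\cO(S_N^+)^\sigma \cong \cO(\Aut^+(B',\psi'))$ for \emph{some} pair $(B',\psi')$; it then pins down $\psi'$ as the Plancherel trace by a dimension count and identifies $B'$ with $\tensor[_\sigma]{C(X)}{}$ via Remark \ref{rem:comparison-of-twists}, asserting $\tensor[_\sigma]{C(X)}{} \cong \bigoplus_r M_{n_r}$ from the product structure of $\sigma_0$ without the explicit Weyl-algebra computation you carry out. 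You instead compute $\tensor[_\sigma]{C(X)}{} \cong B$ first, transport the coaction to get a trace-preserving coaction of $\cO(S_N^+)^\sigma$ on $B$, obtain a surjection $\cO(\Aut^+(B)) \twoheadrightarrow \cO(S_N^+)^\sigma$ from the universal property, and prove injectivity by matching irreducibles through the common $\SO(3)$ fusion rules. This is precisely the strategy of the referee's generalization recorded in Remark \ref{rem:from-referee} (which proves $\Aut^+(B,\psi)^\sigma \cong \Aut^+(\tensor[_\sigma]{B}{},\psi)$ for arbitrary $B$ and $\sigma$), so your route is endorsed elsewhere in the paper; its advantage is that it only uses the \emph{rigidity} of the $\SO(3)$ fusion rules for the injectivity step rather than the full existence direction of the Banica--Mrozinski classification, and it makes the identification of the twisted algebra and of the induced state with the Plancherel trace completely explicit, at the cost of the extra bookkeeping you correctly flag (normalizations of $\omega_r$, the twisted involution $\sharp$ versus the matrix adjoint, and the block weights $n_r^2/N$). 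Both arguments ultimately rest on the same two pillars --- the identification $\tensor[_\sigma]{C(X)}{} \cong \bigoplus_r M_{n_r}$ and $\SO(3)$-fusion rigidity --- so neither is substantially shorter; yours is the more self-contained and more general of the two.
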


\begin{proof}
The corepresentation category of $\cO(S_N^+)^\sigma$ is monoidally equivalent to $\Rep S_N^+$, and the general classification result of such compact quantum groups, discussed in the previous section, implies
\[
\cO(S_N^+)^\sigma \cong \cO(\Aut^+(B, \psi))
\]
for some finite-dimensional C$^\ast$-algebra $B$ and a $\delta$-form $\psi$, such that $\delta^2 = N$.
Moreover, the cocycle twisting does not change the vector spaces underlying the representations, hence we have $B = C(X)$ as a vector space.
We thus have $\delta^2 = N =  \dim B$, hence $\psi$ must be the Plancherel trace on $B$.

By Remark \ref{rem:comparison-of-twists}, the product map of $B$ is given by the composition of the action of $\sigma$ and the product map of $C(X)$, which is the product map of ${}_\sigma C(X)$.  That is,  $B = \tensor[_{\sigma}]{C(X)}{}$.
Now, by the product structure of $\sigma_0$, each block $X_r$ of $X$ gives a copy of $M_{n_r}$, while different blocks remain orthogonal. This shows that
\[
B \cong \bigoplus_{r=1}^m M_{n_r},
\]
and we obtain the claim.
\end{proof}

\begin{remark}\label{rem:from-referee}
It was kindly pointed out to us by an anonymous referee that the above theorem actually admits quite a broad generalization, as follows. We thank the referee for allowing us to include this result and for conveying the idea of the proof. 
\begin{theorem}
Let $(B,\psi)$ be finite dimensional C$^\ast$-algebra equipped with a $\delta$-form $\psi$, and let $\sigma$ be a unitary $2$-cocycle for $\Aut^+(B,\psi)$.  Then $\Aut^+(B,\psi)^\sigma \cong \Aut^+(_{\sigma} B, \psi)$ canonically. 
\end{theorem}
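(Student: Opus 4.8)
The plan is to mirror the proof of Theorem \ref{thm:cocycle-twist-rel}, replacing the appeal to the explicit cocycle on $S_N^+$ by the abstract classification of Banica and Mrozinski. Write $G = \Aut^+(B,\psi)$. Since $\cO(G)^\sigma$ represents a compact quantum group $G^\sigma$ that is monoidally equivalent to $G$ via the dimension-preserving functor $F\colon \Rep(G) \to \Rep(G^\sigma)$ of Definition \ref{CQG-twist}, and $\Rep(G)$ has the fusion rules of $\SO(3)$ by Theorem \ref{thm:banica-mrozinski}, the category $\Rep(G^\sigma)$ also has the fusion rules of $\SO(3)$. Applying the converse direction of Theorem \ref{thm:banica-mrozinski}, there exist a finite-dimensional C$^\ast$-algebra $B'$ and a $\delta'$-form $\psi'$ with $G^\sigma \cong \Aut^+(B', \psi')$. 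The entire content of the theorem is then to identify the pair $(B', \psi')$ with $(\tensor[_\sigma]{B}{}, \psi)$.

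First I would pin down the comodule algebra realizing $G^\sigma$ as a quantum automorphism group. As recorded after Theorem \ref{thm:banica-mrozinski}, the canonical comodule algebra for $\Aut^+(B',\psi')$ is the object $B' = \mathbf 1 \oplus \pi$, the direct sum of the trivial representation and the generating irreducible (the $\SO(3)$-analogue of the spin-$1$ representation), equipped with its multiplication $m'$, the adjoint comultiplication $(m')^*$, unit, and invariant state $\psi'$. Because $B = \mathbf 1 \oplus \pi$ as a $G$-object and $F$ is a tensor functor, this object is precisely $F(B)$ together with the transported structure morphisms, where the tensor constraint of $F$ is the action of $(\pi\otimes\pi)(\sigma^{-1})$. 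By Remark \ref{rem:comparison-of-twists} this transported comodule $\ast$-algebra is isomorphic to the twisted comodule algebra $\tensor[_\sigma]{B}{}$ with its product $\tensor[_\sigma]{\cdot}{}$, involution $\sharp$, and unchanged state $\psi$. Hence $(B',\psi') \cong (\tensor[_\sigma]{B}{}, \psi)$ as $G^\sigma$-comodule $\ast$-algebras, and transporting $\psi$-invariance along this isomorphism yields $G^\sigma \cong \Aut^+(\tensor[_\sigma]{B}{}, \psi)$. Since $F$ is unitary and dimension-preserving, the identity $m_\sigma \circ m_\sigma^* = \delta\,\id$ (obtained by transporting $m\circ m^* = \delta\,\id$ through the unitary tensor constraint) shows simultaneously that $\tensor[_\sigma]{B}{}$ is a C$^\ast$-algebra of the same dimension on which $\psi$ is a $\delta$-form with the same $\delta$, so that $\Aut^+(\tensor[_\sigma]{B}{},\psi)$ is well defined and the resulting isomorphism is the canonical one induced by $F$ (equivalently, the map furnished by the universal property of $\Aut^+(\tensor[_\sigma]{B}{}, \psi)$ applied to the coaction $\rho$ on $\tensor[_\sigma]{B}{}$).

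The main obstacle I expect is the bookkeeping in the previous paragraph: verifying that the algebra object $F(B)$, computed through the tensor constraint $(\pi\otimes\pi)(\sigma^{-1})$, carries \emph{exactly} the twisted product $\tensor[_\sigma]{\cdot}{}$ and compatible involution $\sharp$, rather than merely a cohomologous deformation. This is the role of Remark \ref{rem:comparison-of-twists} together with the fact from \cite{MR2806547} that $\sigma$ and $(\hat R \otimes \hat R)(\sigma^{-1})_{2 1}$ are cohomologous, which upgrades the a priori statement ``$F(B)$ is a twist of $B$ by \emph{some} cocycle'' to an honest isomorphism $F(B) \cong \tensor[_\sigma]{B}{}$. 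A secondary point to handle with care is that the converse direction of Theorem \ref{thm:banica-mrozinski} only produces \emph{some} $(B',\psi')$, so one must argue that the generating object is rigid enough—that is, determined up to isomorphism of comodule $\ast$-algebras by its image under $F$—to conclude that $B'$ is forced to be $\tensor[_\sigma]{B}{}$ and not merely another finite-dimensional C$^\ast$-algebra of the same dimension.
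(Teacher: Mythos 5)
Your overall strategy---twist, invoke the Banica--Mrozinski classification to get $G^\sigma \cong \Aut^+(B',\psi')$, then identify $(B',\psi')$---can probably be made to work, but as written the identification step has a gap, and it is not the ``secondary point'' you call it: it is the entire content of the theorem. From ``$B' = \mathbf 1 \oplus \pi$ as an object of $\Rep(G^\sigma)$'' and ``$F(B) = \mathbf 1 \oplus \pi$ as an object'' you conclude that $B'$ \emph{is} $F(B)$ ``together with the transported structure morphisms.'' But the converse direction of Theorem \ref{thm:banica-mrozinski} only hands you $B'$ as an abstract comodule $\ast$-algebra whose underlying object is $\mathbf 1 \oplus \pi$; nothing yet says that its multiplication is the one transported through the tensor constraint of $F$. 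Closing this requires a uniqueness statement for the $\delta$-form C$^\ast$-algebra structure on the object $\mathbf 1 \oplus \pi$ realizing the given quantum group (e.g.\ via $\dim\mathrm{Hom}(\pi\otimes\pi,\pi)=\dim\mathrm{Hom}(\pi\otimes\pi,\mathbf 1)=1$, which pins down the multiplication up to scalars that are then fixed by the unit, involution, and $\delta$-form constraints). You flag this issue but do not supply the argument, and without it you have only reproved the classification theorem, not identified $B'$ with $_\sigma B$.

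The paper's proof avoids the issue entirely and never invokes the converse direction of Theorem \ref{thm:banica-mrozinski} for $G^\sigma$. It uses the fact recorded in Section \ref{sec:Preliminaries} that the \emph{same linear map} $B \to \cO(G)\otimes B$ is a $\psi$-preserving coaction $_\sigma B \to \cO(G^\sigma)\otimes {}_\sigma B$; by universality this yields a surjective morphism of Hopf $\ast$-algebras $\pi\colon \cO(\Aut^+({}_\sigma B,\psi)) \to \cO(G^\sigma)$, i.e.\ $G^\sigma$ is a quantum subgroup of $\Aut^+({}_\sigma B,\psi)$. Both quantum groups have the fusion rules of $\SO(3)$---the source by the forward direction of Theorem \ref{thm:banica-mrozinski} (here one does need that $({}_\sigma B,\psi)$ is again a finite-dimensional C$^\ast$-algebra carrying a $\delta$-form, which your tensor-constraint computation supplies), the target because $\Rep(G^\sigma)\simeq\Rep(G)$---so the surjection $\pi$ must have trivial kernel and is an isomorphism. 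Since you already have all the needed ingredients (the twisted coaction, the preservation of $\psi$, the equality of fusion rules), replacing your identification of $B'$ by this surjectivity-plus-fusion-rules argument repairs the proof with no extra machinery, and in fact makes Remark \ref{rem:comparison-of-twists} unnecessary.
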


\begin{proof}
Let $G = \Aut^+(B,\psi)$, and form $_\sigma B$ and $G^\sigma$ as described in Section \ref{sec:Preliminaries}.  Recall that $_\sigma B = B$ and $\cO(G^\sigma) = \cO(G)$ as comodules and coalgebras respectively.  Moreover, the map  $_\sigma B \to {}_\sigma B \otimes \cO(G^\sigma)$ is a $\psi$-preserving coaction.    Hence we obtain a surjective morphism of Hopf $\ast$-algebras $\pi:\cO(\Aut^+(_\sigma B, \psi)) \to \cO(G^\sigma)$.  That is, $G^\sigma < \Aut^+(_\sigma B, \psi)$ as a quantum subgroup. But since these two quantum groups have the same fusion rules, it follows that $\ker \pi = \{0\}$ and thus $G^\sigma = \Aut^+(_\sigma B, \psi)$.    
\end{proof}

\end{remark}

\begin{corollary}\label{cor:cross-prod-isoms}
In the above setting of Theorem \ref{thm:cocycle-twist-rel}, with $B = \bigoplus_{r=1}^m M_{n_r}$, there are $h$-preserving actions of $\Gamma^2$ on $\cO(S_N^+)$ and $\cO(\Aut^+(B))$ such that
\[
C^r(S_N^+) \rtimes \Gamma^2 \cong C^r(\Aut^+(B)) \rtimes \Gamma^2,
\]
intertwining the induced traces.
\end{corollary}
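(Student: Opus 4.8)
The plan is to realize both sides as crossed products for the commuting pair of translation actions of $\Gamma$ along the classical subgroup $\Gamma < \Sym(X) \cong S_N$, and then to show that the cocycle twist relating $\cO(S_N^+)$ and $\cO(\Aut^+(B))$ becomes implementable by a unitary once we pass to the crossed product by $\Gamma^2 = \Gamma \times \Gamma$. First I would set up the actions. Let $p \colon \cO(S_N^+) \to \cO(\Gamma)$ be the composite of the restrictions $\cO(S_N^+) \to \cO(S_N) \to \cO(\Gamma)$. Since a coaction of $\cO(\Gamma)$ is the same datum as an action of the finite group $\Gamma$, the two coalgebra maps $(p \otimes \id)\Delta$ and $(\id \otimes p)\Delta$ define commuting actions of $\Gamma$ on $\cO(S_N^+)$, hence an action $\alpha$ of $\Gamma^2$; each is a translation action, so it preserves the Haar state $h$ and extends to $C^r(S_N^+)$. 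Because cocycle deformation of comodule algebras is functorial \cite{MR2202309}, the same pair of coalgebra maps defines an action $\beta$ of $\Gamma^2$ on $\cO(S_N^+)^\sigma \cong \cO(\Aut^+(B))$ (Theorem \ref{thm:cocycle-twist-rel}); here I use that $\sigma$ factors through $\cO(\Gamma)$ and is therefore invariant under these translations, so they remain compatible with the twisted product and involution. As recorded in the preliminaries, the invariant state is unaffected by twisting, so $\beta$ is again Haar-preserving and extends to $C^r(\Aut^+(B))$.

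The main step is to produce the isomorphism of crossed products. The structural observation is that, by Definition \ref{CQG-twist}, the algebra underlying $\cO(S_N^+)^\sigma$ is $\tensor[_\sigma]{\cO(S_N^+)}{_{\sigma^{-1}}}$, i.e.\ the product is twisted on the left by $\sigma$ and on the right by $\sigma^{-1}$; since $\sigma$ pairs only with the $\cO(\Gamma)$-components $p(a_{(1)})$, these two twists are governed precisely by the left- and right-translation gradings of $\Gamma$, which explains why both copies of $\Gamma$ are needed. I would then write a general element of either crossed product as $\sum_{g \in \Gamma^2} a_g \lambda_g$ and Fourier-decompose $\cO(S_N^+)$ into $\Gamma^2$-isotypic components over $\hat\Gamma \times \hat\Gamma$. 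On each component the twisted product differs from the original by an explicit scalar read off from the group $2$-cocycle $\omega = \omega_1 \times \cdots \times \omega_m$ underlying $\sigma$; absorbing these scalars into the commutation of the $\lambda_g$ with the $\cO$-part converts the twisted crossed-product multiplication into the untwisted one. Concretely, there should be a unitary $W$ built from the $\lambda_g$ and the values of $\omega$ such that conjugation by $W$ intertwines the two crossed-product structures, giving the desired map $\Phi \colon \cO(\Aut^+(B)) \rtimes_\beta \Gamma^2 \to \cO(S_N^+) \rtimes_\alpha \Gamma^2$. Multiplicativity of $\Phi$ is exactly the cocycle identity for $\omega$, and compatibility of the twisted involution with $\ast$ uses the normalization $\omega_t(h,h^{-1}) = 1$, equivalently \eqref{eq:cocycle-normaliszation-2}.

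The main obstacle is precisely this verification: showing that the nondegenerate cocycle $\sigma$, which is cohomologically nontrivial on $\cO(S_N^+)$ itself, becomes trivial (implemented by the unitary multiplier $W$) after crossing by $\Gamma^2$, and that the resulting reindexing $\Phi$ is a genuine $\ast$-homomorphism rather than merely a linear bijection. This is where the full two-sided twisting structure and the normalization of $\omega$ must be combined carefully. Once $\Phi$ is established as a $\ast$-isomorphism at the $\ast$-algebra level, the remaining analytic claims are routine: both induced traces have the form $\tau \otimes (\text{normalized trace on } \bC[\Gamma^2])$ with $\tau = h$ on each $\cO$-part, and since $\Phi$ respects the $\bC[\Gamma^2]$-grading and restricts to a Haar-preserving map on the $\cO$-part, it intertwines the traces and hence extends to the reduced crossed products $C^r(S_N^+) \rtimes \Gamma^2 \cong C^r(\Aut^+(B)) \rtimes \Gamma^2$.
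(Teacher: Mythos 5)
Your proposal is correct and follows essentially the same route as the paper: the paper reduces the statement to the general fact that $A \rtimes \Lambda \cong {}_\sigma A \rtimes \Lambda$ for a finite abelian group $\Lambda$ (applied with $\Lambda = \Gamma^2$ acting by the combined left and right translations, exactly as you set things up), proved by conjugating a spatial presentation of the two crossed products by an explicit unitary $V$ whose matrix entries are the cocycle values $\sigma(k^{-1},k')$ --- which is precisely your conjectured $W$, viewed through the Fourier transform $\bC[\Gamma^2] \cong C(\hat\Gamma \times \hat\Gamma)$. The verification you flag as the main obstacle goes through exactly as you predict: multiplicativity is the cocycle identity, and compatibility with the involution and the traces uses the normalization \eqref{eq:cocycle-normaliszation-2}.
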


\begin{proof}
This is a consequence of the structure theory for the general $2$-cocycle deformation scheme (cf.~\cite{arXiv:1107.2512}*{Proposition 9}), but let us elaborate on this for the reader's convenience.

Let $\Lambda$ be a finite commutative group, and $\hat \Lambda$ its Pontryagin dual.
An action of $\Lambda$ on a C$^\ast$-algebra $A$ is the same thing as a grading $A = \bigoplus_{k \in \hat \Lambda} A_k$ by $\hat \Lambda$, or a coaction $\alpha \colon A \to C^*(\hat \Lambda) \otimes A$.
For $k \in \hat \Lambda$, let us write $\alpha^{(k)}(x)$ for the projection of $x$ to $A_k$.

Now, let $\sigma$ be a $2$-cocycle on $\hat \Lambda$, normalized as in \eqref{eq:cocycle-normaliszation-2}.
The deformed algebra $\tensor[_\sigma]{A}{}$ is still graded by $\hat \Lambda$.
Let us denote the corresponding coaction map $\tensor[_\sigma]{A}{} \to C^*(\hat \Lambda) \otimes \tensor[_\sigma]{A}{}$ by $\alpha_\sigma$.

We claim that $A \rtimes \Lambda$ is isomorphic to $\tensor[_\sigma]{A}{}  \rtimes \Lambda$.
The corollary follows from this by considering the action of $\Lambda = \Gamma^2$ on $\cO(S_N^+)$ obtained by combining left and right translations.

We identify $\tensor[_\sigma]{A}{}$ with the algebra $A''$ generated by $\lambda^{(\sigma)}_k \otimes a$ on $\ell^2(\hat\Lambda) \otimes A$ for $k \in \hat \Lambda$ and $a \in A_k$, where $\lambda^{(\sigma)}_k$ is the regular $\sigma$-representation $\lambda^{(\sigma)}_k \delta_{k'} = \sigma(k, k') \delta_{k k'}$.  (Here $(\delta_k)_{k \in \hat\Lambda}$ is an orthonormal basis for $\ell^2(\hat \Lambda)$).
Then the coaction $\alpha_\sigma$ becomes $\alpha'_\sigma\colon\lambda^{(\sigma)}_k \otimes a \mapsto \lambda_k \otimes \lambda^{(\sigma)}_k \otimes a$ on $A''$.
Thus, the crossed product
\[
C(\hat \Lambda) \ltimes A'' \cong \tensor[_\sigma]{A}{} \rtimes \Lambda
\]
is represented by the C$^\ast$-algebra on the right Hilbert $A$-module $\ell^2(\hat \Lambda)^{\otimes 2} \otimes A$ generated by $(\chi_g)_1$ for $g \in \hat \Lambda$ and $\sum_k \lambda_k \otimes \lambda^{(\sigma)}_k \otimes \alpha^{(k)}(x)$ for $k \in \hat \Lambda$ and $x \in A$.  (Here $\chi_g \in C(\hat \Lambda)$ denotes the characteristic function of $\{g\}$.)

Similarly, we obtain an algebra $A' \cong A$ instead of $A''$ by removing $\sigma$, and an analogous spatial presentation of $A \rtimes \Lambda \cong C(\hat \Lambda) \ltimes A'$.
Let $V$ be the unitary operator $\delta_k \otimes \delta_{k'} \mapsto \sigma(k^{-1}, k') \delta_k \otimes \delta_{k'}$ on $\ell^2(\hat\Lambda)^{\otimes 2}$.
We show that $\Phi = \Ad_{V_{1 2}}$ conjugates $C(\hat \Lambda) \ltimes A''$ to $C(\hat \Lambda) \ltimes A'$.

If $k \in \hat \Lambda$ and $x \in A''$, the action of $\Phi(\alpha'_\sigma(x) (\chi_g)_1)$ on the vector $\delta_k \otimes \delta_{k'} \otimes b$ is given by
\[
\sum_h \delta_{g, k} \overline{\sigma(k^{-1}, k')} \sigma(h, k') \sigma(k^{-1} h^{-1}, h k') \delta_{h k} \otimes \delta_{h k'} \otimes \alpha^{(h)}(x) b.
\]
Using the cocycle identity and \eqref{eq:cocycle-normaliszation-2} for $\sigma$, we see that this is equal to
\[
\sum_h \sigma(h, g) \delta_{g, k} \delta_{h k} \otimes \delta_{h k'} \otimes \alpha^{(h)}(x) b.
\]
This is equal to the action of
\[
\sum_{h} \sigma(h, g) (\lambda_h \otimes \lambda_h \otimes \alpha^{(h)}(x)) (\chi_g)_1 = \sum_h \sigma(h, g) \alpha'(\alpha^{(h)}(x)) (\chi_g)_1,
\]
which is indeed in $C(\hat \Lambda) \ltimes A'$.
\end{proof}

\begin{remark}\label{rem:FIELA}
The above corollary also holds at the purely algebraic level, as
\[
\cO(S_N^+) \rtimes \Gamma^2 \cong \cO(\Aut^+(B)) \rtimes \Gamma^2.
\]
This can be seen either by analyzing the proof, or by considering the induced coactions of the coalgebras $\cO(S_N^+) \cong \cO(\Aut^+(B))$.
Moreover, we also get isomorphisms
\begin{align*}
    \cO(S_N^+) \rtimes \Gamma^2 \cong \cO(\Aut^+(B),S_N^+) \rtimes \Gamma^2 \cong \cO(S_N^+,\Aut^+(B)) \rtimes \Gamma^2 ,
\end{align*}
since $\cO(\Aut^+(B),S_N^+) = \tensor[_\sigma]{\cO(S_N^+)}{}$ and similarly for the opposite linking algebra.
\end{remark}

By the Takesaki--Takai duality \cite{Takai}, we obtain the following.

\begin{corollary}\label{cor:dbl-cross-prod-is-mat-amp}
In the setting of Corollary \ref{cor:cross-prod-isoms}, there is a trace-preserving action of $\Gamma^2$ on $C^r(S_N^+) \rtimes \Gamma^2$ such that
\[
(C^r(S_N^+) \rtimes \Gamma^2) \rtimes \Gamma^2 \cong C^r(\Aut^+(B)) \otimes M_{d^4},
\]
where $d = \prod_r n_r = \sqrt{\abs{\Gamma}}$.
Moreover, this isomorphism intertwines the natural traces on both sides.
\end{corollary}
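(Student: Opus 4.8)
The plan is to apply Takesaki--Takai duality \cite{Takai} to the crossed product on the $\Aut^+(B)$ side of Corollary \ref{cor:cross-prod-isoms} and then transport the conclusion back across that isomorphism. Corollary \ref{cor:cross-prod-isoms} provides a trace-preserving $\ast$-isomorphism $\Phi\colon C^r(S_N^+)\rtimes\Gamma^2 \to C^r(\Aut^+(B))\rtimes\Gamma^2$. Since $\Gamma^2$ is a finite Abelian group, the crossed product $A\rtimes\Gamma^2$ (with $A := C^r(\Aut^+(B))$) carries the dual action $\hat\alpha$ of the Pontryagin dual $\widehat{\Gamma^2}$, determined on the canonical generators by $\hat\alpha_\chi(a u_g) = \chi(g)\,a u_g$ for $\chi\in\widehat{\Gamma^2}$, $a\in A$ and implementing unitaries $u_g$. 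Fixing an isomorphism $\widehat{\Gamma^2}\cong\Gamma^2$ and pulling $\hat\alpha$ back through $\Phi$ yields the asserted action $\beta$ of $\Gamma^2$ on $C^r(S_N^+)\rtimes\Gamma^2$; as $\Phi$ intertwines the canonical traces and $\hat\alpha$ preserves the canonical trace of $A\rtimes\Gamma^2$, the action $\beta$ is automatically trace-preserving.

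First I would invoke Takesaki--Takai duality for the finite Abelian group $\Gamma^2$ acting on $A$, giving a canonical isomorphism
\[
(A\rtimes\Gamma^2)\rtimes_{\hat\alpha}\widehat{\Gamma^2}\cong A\otimes K(\ell^2(\Gamma^2)).
\]
From $\Gamma=\Gamma_1\times\cdots\times\Gamma_m$ with $\Gamma_r=\bZ_{n_r}\times\bZ_{n_r}$ we get $\abs{\Gamma}=\prod_r n_r^2 = d^2$, hence $\abs{\Gamma^2}=d^4$ and $K(\ell^2(\Gamma^2))=M_{d^4}$. Because $\Phi$ intertwines $\beta$ with $\hat\alpha$, it induces an isomorphism of the corresponding crossed products, and composing with the displayed duality gives
\[
(C^r(S_N^+)\rtimes\Gamma^2)\rtimes_\beta\Gamma^2 \cong (A\rtimes\Gamma^2)\rtimes_{\hat\alpha}\widehat{\Gamma^2} \cong C^r(\Aut^+(B))\otimes M_{d^4},
\]
which is the desired isomorphism.

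The step I expect to demand the most care is matching the natural traces. The canonical trace on a crossed product by a trace-preserving action of a finite group $\Lambda$ is $\widetilde\tau(\sum_{g}a_g u_g) = \tau(a_e)$; a direct computation shows the dual action preserves it, so iterating the construction equips the double crossed product with a canonical normalized trace, and this is the trace respected by the isomorphism induced by $\Phi$ on the left-hand side. On the right-hand side the natural trace is $h\otimes\mathrm{tr}_{d^4}$, with $h$ the Haar trace of $\Aut^+(B)$ and $\mathrm{tr}_{d^4}$ the normalized trace on $M_{d^4}$. The remaining task is to check that the explicit Takesaki--Takai isomorphism carries the iterated canonical trace to $h\otimes\mathrm{tr}_{d^4}$; this is the standard compatibility of the duality isomorphism with the regular-representation bookkeeping, and once verified it completes the proof that the isomorphism intertwines the natural traces.
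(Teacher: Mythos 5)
Your proposal is correct and follows exactly the paper's route: the paper derives this corollary in one line by applying Takesaki--Takai duality to the crossed product isomorphism of Corollary \ref{cor:cross-prod-isoms}, which is precisely what you do (pulling the dual action back through $\Phi$ and using $\abs{\Gamma^2}=d^4$). Your additional bookkeeping on the traces is the standard verification the paper leaves implicit.
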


Another consequence of the crossed product isomorphism in Corollary \ref{cor:cross-prod-isoms} are the embeddings (see \citelist{\cite{MR2492990}*{Lemma 4.1}})
\begin{equation}\label{eq:O-Aut-B-emb-to-ampl-S-N-plus}
\cO(\Aut^+(B)) \hookrightarrow M_{d^2} \otimes \cO(S_N^+) \otimes M_{d^2} \cong \cO(S_N^+) \otimes M_{d^4} \quad \& \quad \cO(S_N^+) \hookrightarrow \cO(\Aut^+(B)) \otimes M_{d^4}.
\end{equation}
and various parallels for the C$^*$-algebraic and von Neumann algebraic settings, which has further implications for the structure of the associated algebras $\Aut^+(B)$ as we will see in the next section.

In Section \ref{sec:unitary-error-basis}, we improve on the above and give an isomorphism between the iterated crossed product by $\Gamma$ and amplification by $M_{d^2}$.

\section{Monoidal Equivalence and Matrix Models}
\label{sec:mon-equiv-mat-model}

We now consider applications of the above crossed product isomorphism results to the study of various external approximation properties for the associated quantum group algebras, including the Connes Embedding property, residual finite-dimensionality, and inner unitarity. We begin by working in a slightly more general framework and consider how these approximation properties can be transferred between monoidally equivalent quantum groups.

\begin{definition}[\cite{BrChFr18}]
A unital $\ast$-algebra $\cA$ is said to be \emph{residually finite-dimensional}, or \emph{RFD}, if there is an injective $\ast$-homomorphism
\[
\cA \to \prod_{i \in I} M_{d_i}
\]
for some index set $I$ and a family of positive integers $(d_i)_{i \in I}$.
\end{definition}

We consider this property for the $\ast$-algebras of the form $\cO(G)$ for some compact quantum group.
When $\cO(G)$ is RFD, then its Haar state $h$ is an amenable trace and the quantum group von Neumann algebra $L^\infty(G)$ has the Connes Embedding Property (CEP) \cite{BhBrChWa17}.  Recall that a finite von Neumann algebra $(\cM, \tau)$ has the CEP if and only if there exists a $\tau$-preserving embedding $\cM \hookrightarrow \mathcal R^\omega$, where $\mathcal R^\omega$ is an ultrapower of the hyperfinite II$_1$-factor $\mathcal R$.

\subsection{Transfer of finite approximations}

The following theorem shows that under the assumption of the existence of a non-zero finite-dimensional $\ast$-representation of the linking algebra  $\cO(G_1,G_2)$ on a Hilbert space, then one can transfer many approximate finitary representation-theoretic properties from one quantum group to another.

\begin{theorem}\label{T: transferring matrix models RFD}
Let $G_1$ and $G_2$ be monoidally equivalent compact quantum groups, and assume that there exists a non-zero finite-dimensional $\ast$-representation $\pi \colon \cO(G_1,G_2) \rightarrow M_d$ of the linking algebra.
Then the following statements are equivalent.
\begin{enumerate}
    \item $\cO(G_2)$ is RFD.
    \item $\cO(G_1,G_2)$ is RFD.
    \item $\cO(G_1)$ is RFD.
\end{enumerate}
\end{theorem}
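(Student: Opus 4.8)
The plan is to establish the two equivalences $(1)\Leftrightarrow(2)$ and $(2)\Leftrightarrow(3)$ separately, exploiting the symmetry between the two coactions $\delta_1,\delta_2$ on the linking algebra. I will freely use the reformulation that a unital $\ast$-algebra $\cA$ is RFD precisely when its finite-dimensional $\ast$-representations separate points, together with the elementary fact that the algebraic tensor product of two RFD algebras is again RFD (given a nonzero element, apply a separating representation on the first leg to reach a nonzero element of $M_e\otimes\cB$, then apply one on the second leg to a nonzero matrix entry). In particular $\cA\otimes M_d$ is RFD whenever $\cA$ is. Replacing $M_d$ by the corner $\pi(1)M_d\pi(1)$ if necessary, I may assume $\pi$ is unital; in any case $\pi\neq0$ forces $\tau(\pi(1))>0$ for the normalised trace $\tau$ on $M_d$, which is all that is needed below.

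For $(3)\Rightarrow(2)$ the idea is to push $\pi$ through the first coaction. Consider the $\ast$-homomorphism $\Psi_1=(\id\otimes\pi)\circ\delta_1\colon\cO(G_1,G_2)\to\cO(G_1)\otimes M_d$. The crux is that $\Psi_1$ is injective even though $\pi$ need not be: using that $\Psi_1$ is multiplicative and $\ast$-preserving, and the invariance identity $(h_1\otimes\id)\delta_1(w)=\omega_{12}(w)1$, one computes
\[
(h_1\otimes\tau)\big(\Psi_1(z)^*\Psi_1(z)\big)=\big(h_1\otimes(\tau\circ\pi)\big)\big(\delta_1(z^*z)\big)=\tau(\pi(1))\,\omega_{12}(z^*z).
\]
Since $\tau(\pi(1))>0$ and $\omega_{12}$ is faithful, the left-hand side vanishes only when $z=0$, so $\Psi_1$ embeds $\cO(G_1,G_2)$ into the RFD algebra $\cO(G_1)\otimes M_d$. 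The implication $(1)\Rightarrow(2)$ is the mirror image: one pushes $\pi$ through the \emph{left} leg of the second coaction, setting $\Psi_2=(\pi\otimes\id)\circ\delta_2\colon\cO(G_1,G_2)\to M_d\otimes\cO(G_2)$, and runs the identical computation with the identity $(\id\otimes h_2)\delta_2(w)=\omega_{12}(w)1$ to obtain an embedding into the RFD algebra $M_d\otimes\cO(G_2)$.

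For the reverse implications $(2)\Rightarrow(3)$ and $(2)\Rightarrow(1)$ — the only ones not requiring $\pi$ — I would use the canonical state-preserving $\ast$-homomorphism $\theta_1\colon\cO(G_1)\to\cO(G_1,G_2)\otimes\cO(G_2,G_1)$. Applying $\omega_{12}\otimes\omega_{21}$ to the defining identity gives $h_1(x^*y)=(\omega_{12}\otimes\omega_{21})\big(\theta_1(x)^*\theta_1(y)\big)$, so $\theta_1$ is isometric for the GNS inner products; as $h_1$ is faithful on $\cO(G_1)$, $\theta_1$ is injective. Under $(2)$ both tensor factors are RFD, using the $\ast$-isomorphisms $\cO(G_2,G_1)\cong\cO(G_1,G_2)^{\opo}\cong\cO(G_1,G_2)$, so $\cO(G_1)$ embeds into an RFD algebra and is therefore RFD. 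The symmetric argument with $\theta_2$ gives $(2)\Rightarrow(1)$.

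I expect the load-bearing step to be the injectivity of $\Psi_1$ (and $\Psi_2$): a priori, composing the faithful coaction with a possibly far-from-injective representation $\pi$ could collapse information, and the reason it does not is that slicing the $M_d$-leg against \emph{any} state reduces, via the invariance identity, to a positive multiple of the faithful invariant state $\omega_{12}$. This is exactly where the non-vanishing hypothesis on $\pi$ enters. The remaining ingredients — stability of RFD under tensor products and the isometry of the $\theta_i$ — are routine.
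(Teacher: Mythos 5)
Your proof is correct and follows essentially the same route as the paper: embedding the linking algebra into $M_d \otimes \cO(G_2)$ (resp.\ $\cO(G_1)\otimes M_d$) via $(\pi\otimes\id)\delta_2$ and using the invariance identity together with faithfulness of $\omega_{12}$ to get injectivity, then passing from $(2)$ to the $\cO(G_i)$ via the state-preserving embedding $\theta_1$ and stability of RFD under tensor products and opposites. The only nitpick is that $\cO(G_1,G_2)^{\opo}$ is not literally $\ast$-isomorphic to $\cO(G_1,G_2)$ (the canonical map $x\mapsto x^*$ is conjugate-linear), but the statement you actually need --- that RFD passes to opposite algebras --- is true and is what the paper uses.
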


\begin{proof}
By symmetry, it is enough to prove the implications $(1) \implies (2)$ and $(2) \implies (3)$.

$(1) \implies (2)$.  Define a $\ast$-homomorphism $\rho\colon\cO(G_1,G_2) \to M_d \otimes \cO(G_2)$ by $\rho = (\pi \otimes \id)\delta_2$.  Note that $\rho$ is injective because
\[
\rho(x) = 0 \implies \rho(x^*x) = 0 \implies 0 = (\id \otimes h_2)\rho(x^*x) = \pi((\id  \otimes h_2)\delta_2(x^*x)) = \omega_{12}(x^*x) \pi(1) , 
\]
and $\omega_{12}$ is faithful.
Since $M_d \otimes \cO(G_2)$ is RFD and $\rho$ is injective, we are done.

$(2) \implies (3)$. Consider the injective $\ast$-homomorphism $\theta_1 \colon \cO(G_1) \to \cO(G_1,G_2) \otimes \cO(G_2,G_1)$ (see Section \ref{sec:mon-equiv-mat-model}). Since $\cO(G_1,G_2)$ is RFD if and only if the opposite algebra $\cO(G_2,G_1)$ is RFD, and tensor products of RFD algebras are RFD (indeed -- if $\cA \hookrightarrow \prod_i M_{d_i}$, $\cB \hookrightarrow \prod_j M_{d_j}$, then $\cA \otimes \cB \hookrightarrow \prod_{i,j} M_{d_id_j}$), we conclude that $\cO(G_1)$ is RFD.  
\end{proof}

Since in the the proof of Theorem \ref{T: transferring matrix models RFD} we established the existence of various state-preserving embeddings, the preceding theorem admits a natural analogue for the CEP.

\begin{theorem} \label{T: transferring matrix models CEP}
Let $G_1$ and $G_2$ be monoidally equivalent compact quantum groups of Kac type.
Assume that there exists a non-zero finite-dimensional $\ast$-representation of the linking algebra $\cO(G_1,G_2)$.
Then the following statements are equivalent.
\begin{enumerate}
    \item $L^\infty(G_2)$ has the CEP.
    \item $L^\infty(G_1,G_2)$ has the CEP.
    \item $L^\infty(G_1)$ has the CEP.
\end{enumerate}
\end{theorem}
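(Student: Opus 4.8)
The plan is to replay the proof of Theorem~\ref{T: transferring matrix models RFD} at the level of maps, while upgrading each injective $\ast$-homomorphism appearing there to a \emph{trace-preserving} embedding so that it extends to the von Neumann algebra completions; the conclusion then follows from standard stability properties of the CEP. First I would record the structural input that makes the statement meaningful: since $G_1$ and $G_2$ are of Kac type, the canonical invariant state $\omega_{12}$ on $\cO(G_1,G_2)$ is a trace, so that $L^\infty(G_1,G_2) := \pi_{\omega_{12}}(\cO(G_1,G_2))''$ is a genuine tracial von Neumann algebra (and likewise $\omega_{21}$ and $L^\infty(G_2,G_1)$). I would also collect the permanence properties to be used: the CEP passes to any von Neumann subalgebra carrying the restricted trace, it is stable under tensoring with a matrix algebra and, more generally, under tensor products of tracial von Neumann algebras, and it is preserved under passage to the opposite algebra, since $\mathcal{R} \cong \mathcal{R}^{\opo}$.

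For $(1)\Rightarrow(2)$, I would reuse $\rho = (\pi\otimes\id)\delta_2$. As in the RFD proof it is injective, and setting $p = \pi(1)$ (a projection in $M_d$) its image lies in the corner $pM_dp \otimes \cO(G_2)$ with $\rho(1) = p\otimes 1$. The computation $(\id\otimes h_2)\rho(x) = \omega_{12}(x)\,p$ from that proof shows that, writing $\tau_p$ for the normalized trace on $pM_dp$ and using that $h_2$ is a trace, $\rho$ is trace-preserving from $(\cO(G_1,G_2),\omega_{12})$ into $(pM_dp\otimes\cO(G_2),\tau_p\otimes h_2)$. Hence $\rho$ extends to a trace-preserving normal embedding $L^\infty(G_1,G_2)\hookrightarrow pM_dp\otimes L^\infty(G_2)$. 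Since $pM_dp$ is finite dimensional and $L^\infty(G_2)$ has the CEP, the target has the CEP, and therefore so does its subalgebra $L^\infty(G_1,G_2)$.

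For $(2)\Rightarrow(3)$, I would reuse the injective $\ast$-homomorphism $\theta_1\colon\cO(G_1)\to\cO(G_1,G_2)\otimes\cO(G_2,G_1)$. The state-preservation identity recorded in Section~\ref{sec:Preliminaries}, namely $h_1(x)1 = (\omega_{12}\otimes\id)\theta_1(x) = (\id\otimes\omega_{21})\theta_1(x)$, yields $h_1 = (\omega_{12}\otimes\omega_{21})\theta_1$, so $\theta_1$ is trace-preserving and extends to a normal embedding $L^\infty(G_1)\hookrightarrow L^\infty(G_1,G_2)\otimes L^\infty(G_2,G_1)$. Using $\cO(G_2,G_1)\cong\cO(G_1,G_2)^{\opo}$ (intertwining $\omega_{21}$ and $\omega_{12}$), assumption $(2)$ gives the CEP for $L^\infty(G_2,G_1)$ as well, and stability under tensor products yields the CEP for the target; hence $L^\infty(G_1)$ has the CEP. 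The remaining implications follow by the symmetry $G_1\leftrightarrow G_2$, noting that a finite-dimensional representation of $\cO(G_1,G_2)$ induces one of $\cO(G_2,G_1)\cong\cO(G_1,G_2)^{\opo}$.

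The main obstacle I anticipate is not in the maps, which are exactly those already built for the RFD case, but in pinning down the two analytic facts on which everything rests: first, the traciality of $\omega_{12}$ under the Kac-type hypothesis, which is what allows $L^\infty(G_1,G_2)$ to be treated as a finite von Neumann algebra at all; and second, the closure of the class of Connes-embeddable tracial von Neumann algebras under tensor products, the only nonelementary permanence property invoked. Both are known, but must be cited carefully, and one must verify throughout that the embeddings are not merely injective but preserve the relevant traces, so that they survive GNS completion.
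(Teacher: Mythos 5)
Your proposal is correct and follows essentially the same route as the paper's proof, which likewise observes that $\omega_{12}$ is a trace under the Kac hypothesis and upgrades the injective $\ast$-homomorphisms $\rho$ and $\theta_1$ from the RFD argument to trace-preserving embeddings of the von Neumann completions, then invokes standard permanence of the CEP. Your treatment is in fact slightly more careful than the paper's (the corner $p M_d p$ for a possibly non-unital $\pi$, and the explicit reduction of the reverse implications to the opposite linking algebra), but these are refinements of the same argument rather than a different one.
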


\begin{proof}
First note that the invariant state $\omega_{12}$ on $\cO(G_1,G_2)$ is a trace, since both $G_i$ are of Kac type.
Next, we note that in the proof of Theorem \ref{T: transferring matrix models RFD}, we have constructed trace-preserving embeddings
\begin{gather*}
\begin{aligned}
\rho\colon L^\infty(G_1,G_2) &\to M_d \otimes L^\infty(G_2),&
\theta_1\colon L^\infty(G_1) &\to L^\infty(G_1,G_2) \mathbin{\bar\otimes} L^\infty(G_2,G_1),
\end{aligned}\\
\sigma\colon L^\infty(G_2) \to M_d \otimes L^\infty(G_1) \otimes M_d.
\end{gather*}
From these embeddings, it follows that (1)---(3) are equivalent.
\end{proof}

\begin{remark}
The existence of a finite-dimensional representation $\pi\colon \cO(G_1,G_2) \to M_d$ is a fairly restrictive assumption.
For example, it forces the unitary fiber functor $F\colon \Rep(G_1) \to \Rep(G_2)$ to be dimension preserving, i.e., we have $\dim H_\rho = \dim H_{F(\rho)}$ for each $\rho \in \Rep(G_1)$.
This follows from the fact that $(\id \otimes \pi)(X^\rho) \in M_{d(\rho), d(F(\rho))} \otimes M_d$ is a finite-dimensional unitary operator, which can happen only if $\dim H_\rho = \dim H_{F(\rho)}$.
In particular, this forces $G_2$ to be a $2$-cocycle deformation of $G_1$ (cf. Remark \ref{rmk-dim-pres}).

On the other hand, this assumption on the existence of $\pi$ is necessary for the conclusions to hold.  For example, if $n \ge 3$ and $q \in (-1,0)$ is such that $q+q^{-1} = -n$, then $O_n^+ \sim_m SU_q(2)$ \cite{MR2202309}, $\cO(O_n^+)$ is RFD \cite{chi-rfd}, while on the other hand $\cO(SU_q(2))$ cannot be RFD because it is not of Kac type \cite{So05}.
\end{remark}

\begin{proposition}
\label{prop:fin-dim-rep-link-Autplus-SNplus}
Let $B$ be a finite-dimensional C$^\ast$-algebra, and put $N = \dim B$.
Then the linking algebra $\cO(\Aut^+(B), S_N^+)$ has a finite-dimensional representation.
\end{proposition}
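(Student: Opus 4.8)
The plan is to exploit the cocycle‑twist description of the linking algebra. By Theorem \ref{thm:cocycle-twist-rel} we have $\Aut^+(B) \cong (S_N^+)^\sigma$, and since the linking algebra of a cocycle twist is $\cO(G^\sigma, G) = \tensor[_\sigma]{\cO(G)}{}$, we may identify $\cO(\Aut^+(B), S_N^+) \cong \tensor[_\sigma]{\cO(S_N^+)}{}$ (this is exactly the identification recorded in Remark \ref{rem:FIELA}). It therefore suffices to produce a nonzero finite‑dimensional $\ast$‑representation of $\tensor[_\sigma]{\cO(S_N^+)}{}$, and the whole construction of $\sigma$ is tailored to make this easy.

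The key observation is that $\sigma$ factors through the finite abelian subgroup $\Gamma < S_N$: by definition $\sigma = \sigma_0 \circ (r \otimes r)$, where $r\colon \cO(S_N^+) \to \cO(S_N) \to \cO(\Gamma)$ is the (surjective) Hopf $\ast$‑algebra restriction map and $\sigma_0$ is the normalized cocycle on $\cO(\Gamma)$. First I would check that $r$ descends to a $\ast$‑homomorphism of twisted algebras $\tensor[_\sigma]{\cO(S_N^+)}{} \to \tensor[_{\sigma_0}]{\cO(\Gamma)}{}$. Because $r$ is a coalgebra map, $\sigma(a_{(1)}, b_{(1)}) = \sigma_0(r(a)_{(1)}, r(b)_{(1)})$, and hence $r(a \tensor[_\sigma]{\cdot}{} b) = r(a) \tensor[_{\sigma_0}]{\cdot}{} r(b)$; the same coalgebra compatibility, together with $r \circ S^{-1} = S^{-1} \circ r$ and $r(a^*) = r(a)^*$, shows that $r$ intertwines the twisted involutions $\sharp$. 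I expect this verification — reconciling the twisted product \emph{and} the twisted $\sharp$‑involution under restriction — to be the only place requiring genuine care, though it is a formal Sweedler‑calculus check rather than a deep obstacle.

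Finally, the target $\tensor[_{\sigma_0}]{\cO(\Gamma)}{}$ is finite‑dimensional, since cocycle deformation does not alter the underlying vector space and $\dim \cO(\Gamma) = \abs{\Gamma} = d^2 < \infty$, where $d = \prod_r n_r$. In fact, as $\sigma_0$ corresponds to the nondegenerate cocycle $\omega_1 \times \dots \times \omega_m$ on $\hat\Gamma \cong \prod_r (\bZ_{n_r} \times \bZ_{n_r})$, the deformed algebra is the associated twisted group algebra of $\hat\Gamma$, which decomposes as $\bigotimes_r M_{n_r} \cong M_d$ — precisely the same mechanism that produced the matrix blocks of $B$ from the twisting of $C(X)$ in the proof of Theorem \ref{thm:cocycle-twist-rel}. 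Composing $r$ with a faithful representation of this finite‑dimensional C$^\ast$‑algebra then yields a finite‑dimensional $\ast$‑representation $\cO(\Aut^+(B), S_N^+) \to M_d$, which is nonzero because $r$ is surjective. (Alternatively, one could build such a representation directly from the universal property of the linking algebra by exhibiting a suitable $\psi$‑preserving $\ast$‑homomorphism $\bC^N \to B \otimes M_d$, but the cocycle route is cleaner given the results already at hand.)
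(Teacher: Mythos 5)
Your argument is correct and follows essentially the same route as the paper: the paper's proof likewise invokes Theorem \ref{thm:cocycle-twist-rel} to identify $\cO(\Aut^+(B),S_N^+)$ with $\tensor[_\sigma]{\cO(S_N^+)}{}$ and then observes that, since $\sigma$ is pulled back from the finite subgroup $\Gamma < S_N$, this algebra has the finite-dimensional quotient $\tensor[_\sigma]{\cO(\Gamma)}{}$. Your additional verification that the restriction map intertwines the twisted products and involutions, and the identification of the quotient with $M_d$, are correct elaborations of what the paper leaves implicit.
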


\begin{proof}
By Theorem \ref{thm:cocycle-twist-rel}, we have $\Aut^+(B) \cong (S_N^+)^\sigma$ for some dual $2$-cocycle $\sigma$ induced from some such $\sigma_0$ for a finite subgroup $G < S_N^+$.
Thus the linking algebra $\cO(\Aut^+(B), S_N^+) = \tensor[_\sigma]{\cO(S_N^+)}{}$ has a finite-dimensional quotient $\tensor[_\sigma]{\cO(G)}{}$.
\end{proof}

\begin{corollary}\label{cor:Aut-RFD-and-CEP}
Let $B$ be a finite-dimensional C$^\ast$-algebra equipped with any faithful trace $\psi$.  Then $\cO(\Aut^+(B,\psi))$ is RFD and hence has the CEP.
\end{corollary}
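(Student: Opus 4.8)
The plan is to reduce the assertion to the case of the Plancherel trace, where the transfer machinery developed above applies directly, and then to propagate residual finite-dimensionality through the free-product decomposition. First I would invoke Proposition \ref{P: general trace}: writing $B = \bigoplus_{r=1}^m B_r$ for the coarsest decomposition into blocks on which $\psi$ restricts to a $\delta_r$-form, we obtain an isomorphism of Hopf $\ast$-algebras
\[
\cO(\Aut^+(B,\psi)) \cong \ast_{r=1}^m \cO(\Aut^+(B_r,\psi_r)),
\]
where $\psi_r$ is the normalization of $\psi|_{B_r}$. The point that uses the hypothesis is that, since $\psi$ is a trace, each $\psi_r$ is a \emph{tracial} $\delta_r$-form; by the uniqueness of tracial $\delta$-forms recorded in the Example following Proposition \ref{P: general trace}, each $\psi_r$ is then the Plancherel trace on $B_r$. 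Consequently $\Aut^+(B_r,\psi_r) = \Aut^+(B_r)$, and each of these quantum groups is of Kac type.

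Next I would apply the transfer results to each factor. For each $r$, set $N_r = \dim B_r$; then $\Aut^+(B_r)$ and $S_{N_r}^+$ are monoidally equivalent (by Theorem \ref{thm:derijdt-vandervennet}, or concretely by the cocycle twist of Theorem \ref{thm:cocycle-twist-rel}), and Proposition \ref{prop:fin-dim-rep-link-Autplus-SNplus} supplies a nonzero finite-dimensional $\ast$-representation of the linking algebra $\cO(\Aut^+(B_r), S_{N_r}^+)$. Since $\cO(S_{N_r}^+)$ is RFD for every $N_r$ \cite{BrChFr18}, the implication $(1)\Rightarrow(3)$ of Theorem \ref{T: transferring matrix models RFD} (applied with $G_1 = \Aut^+(B_r)$ and $G_2 = S_{N_r}^+$) gives that $\cO(\Aut^+(B_r))$ is RFD.

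Finally I would assemble the pieces. It is standard that a free product of RFD $\ast$-algebras is again RFD, so the isomorphism of the first step shows that $\cO(\Aut^+(B,\psi))$ is RFD. For the CEP conclusion, note that the free product of the Kac-type factors $\Aut^+(B_r)$ is again of Kac type, so the Haar state on $\cO(\Aut^+(B,\psi))$ is a trace; combined with residual finite-dimensionality this makes it an amenable trace, whence $L^\infty(\Aut^+(B,\psi))$ has the CEP by \cite{BhBrChWa17} (one could alternatively deduce Kac type from RFD directly, as in \cite{So05}).

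I do not expect a genuine obstacle here, since all the substantive input — the residual finite-dimensionality of $S_N^+$ and the finite-dimensional model of the linking algebra — is already in hand. The only step requiring care is the reduction of an arbitrary faithful trace to the Plancherel setting: one must verify that the blocks $B_r$ of the coarsest $\delta$-form decomposition carry exactly their Plancherel traces, so that Proposition \ref{prop:fin-dim-rep-link-Autplus-SNplus} applies verbatim to each factor, together with the (routine) permanence of RFD under free products.
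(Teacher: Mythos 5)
Your proposal is correct and follows essentially the same route as the paper: establish the Plancherel-trace case by transferring RFD from $\cO(S_{N}^+)$ through the finite-dimensional representation of the linking algebra (Proposition \ref{prop:fin-dim-rep-link-Autplus-SNplus} together with Theorem \ref{T: transferring matrix models RFD}), and then handle a general faithful trace via the free product decomposition of Proposition \ref{P: general trace} and the permanence of RFD under free products. Your explicit verification that each block $\psi_r$ of a tracial $\delta_r$-form decomposition must be the Plancherel trace on $B_r$ is a detail the paper leaves implicit, but it is exactly the right justification for the reduction.
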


\begin{proof}
The claim for $S_N^+$ is established in \cite{BrChFr18}.
This, together with Theorem \ref{T: transferring matrix models RFD} and Proposition \ref{prop:fin-dim-rep-link-Autplus-SNplus}, implies the claim when $\psi$ is the Plancherel trace.
The general case then follows from the free product decomposition in Proposition \ref{P: general trace} and the stability of these properties with respect to fee products \cite{EL}. 
 \end{proof}

\subsection{Inner faithful representations}
\label{sec:inn-faith-free-aut}

\begin{definition}[\cites{bb-inner,MR2681256}]
Let $G$ be a compact quantum group and $\mathcal A$ be unital $\ast$-algebra.
A $\ast$-homomorphism $\pi\colon\cO(G) \to \mathcal A$ is said to be {\it inner faithful} if $\ker (\pi)$ does not contain any non-zero Hopf $\ast$-ideal.
If there exists a finite-dimensional C$^\ast$-algebra $\mathcal A$ and an inner faithful $\pi\colon \cO(G) \to \mathcal{A}$, then we call $\cO(G)$ an {\it inner unitary} Hopf $\ast$-algebra.
\end{definition}

The notion of inner unitarity for Hopf $\ast$-algebras is a quantum analogue of (a strong form of) linearity for discrete groups. 
Recall that a discrete group $\Gamma$ is called a {\it linear group} if there exists a faithful group homomorphism $\pi\colon \Gamma \to \mathrm{GL}_d(\bC)$. In this context, we have that $\Gamma$ is linear if and only if the group algebra $\bC \Gamma$ admits an inner faithful homomorphism to $M_d$ (without assuming compatibility for $\ast$-structures) \cites{bb-inner}.  If the morphism $\pi$ is $\ast$-preserving, this is equivalent to saying that we have an embedding $\Gamma \hookrightarrow U_d$.
In  general, if $\cO(G)$ is inner unitary, then it is RFD \cite{BrChFr18}.

\begin{proposition}\label{prop:Aut-Inner-Unitary}
Let $B$ be a finite dimensional C$^\ast$-algebra such that dim$(B)$ lies outside the range $[6,9]$.
Then $\cO(\Aut^+(B))$ is inner unitary.
\end{proposition}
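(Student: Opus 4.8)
The plan is to transport inner unitarity across the monoidal equivalence class of fixed dimension and thereby reduce to a single convenient representative, which we take to be $\cO(S_N^+)$ with $N = \dim B$. By Theorem \ref{thm:cocycle-twist-rel} we have $\cO(\Aut^+(B)) \cong \cO(S_N^+)^\sigma$, where $\sigma$ is a dual $2$-cocycle induced from a finite abelian subgroup $\Gamma < S_N$. Since $\cO(S_N^+)^\sigma$ and $\cO(S_N^+)$ have the same underlying coalgebra, I expect a finite-dimensional inner faithful model of one to produce such a model of the other, and making this precise is the substance of the reduction. Concretely, the cocycle $\sigma$ is implemented on a finite-dimensional space by the regular $\sigma$-representation $\lambda^{(\sigma)}$ already appearing in the proof of Corollary \ref{cor:cross-prod-isoms}; given an inner faithful $\ast$-homomorphism $\pi \colon \cO(S_N^+) \to M_d$, I would twist it against $\lambda^{(\sigma)}$ to obtain a $\ast$-homomorphism $\cO(S_N^+)^\sigma \to M_d \otimes M_\ell$, where $M_\ell$ hosts the regular $\sigma$-representation, and then verify that its Hopf image is still all of $(S_N^+)^\sigma$. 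The finiteness of $\Gamma$ is exactly what keeps $M_\ell$ finite-dimensional, and this is where I would invoke Bichon's observation \cite{bb-inner} that inner unitarity is stable under cocycle deformation.

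It then remains to prove the genuinely harder statement that $\cO(S_N^+)$ is itself inner unitary for $N \notin [6,9]$. The crucial point is that inner unitarity is strictly stronger than the residual finite-dimensionality of \cite{BrChFr18}: the latter only asks for an embedding into an \emph{infinite} product $\prod_i M_{d_i}$, whereas inner unitarity demands an inner faithful map into a \emph{single} finite-dimensional C$^\ast$-algebra. Unwinding the definitions, what is required is a finite family of finite-dimensional magic unitaries over $\{1, \dots, N\}$ whose joint Hopf image is all of $S_N^+$; equivalently, finitely many finite quantum subgroups that topologically generate $S_N^+$.

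To produce such models I would draw on two sources. For the small dimensions $N \le 5$ the situation is degenerate: for $N \le 3$ the algebra $\cO(S_N^+)$ is already finite-dimensional, while for $N = 4, 5$ one writes down explicit inner faithful magic unitaries. For the remaining dimensions $N \ge 10$ I would feed finite-dimensional magic unitaries built from unitary error bases --- the same circle of ideas developed in Section \ref{sec:unitary-error-basis} --- into the topological generation machinery of \cite{BrChFr18}, arranging the dimension count so that the joint Hopf image exhausts $S_N^+$. The threshold $N \ge 10$ is where I expect enough room to become available for this construction to close up.

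The main obstacle is precisely this base case: controlling the joint Hopf image of a finite family of finite-dimensional magic unitaries tightly enough to conclude that it is all of $S_N^+$, and seeing why the construction cannot be pushed into the window $N \in \{6, 7, 8, 9\}$. The boundary value $N = 9$ is the perfect square $3^2$ and inherits the same degeneracy that later forces the exclusion of $O_3^+$ in Theorem \ref{thm:FO-Inner-Unitary}, while $N \in \{6, 7, 8\}$ are non-squares lying below the threshold of the large-$N$ argument and above the reach of the explicit small-case analysis. A secondary point I would treat with care is that the twisting step must preserve inner \emph{faithfulness} rather than mere injectivity: the map $\theta_1$ of Section \ref{sec:Preliminaries} is injective, but composing it with a single finite-dimensional representation of the linking algebra detects only a finite quantum subgroup, and so is far from inner faithful on $\cO(\Aut^+(B))$ by itself.
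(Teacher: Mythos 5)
Your reduction step is essentially the paper's: the paper transfers inner unitarity from $\cO(S_N^+)$ to $\cO(\Aut^+(B))$ via the embedding \eqref{eq:O-Aut-B-emb-to-ampl-S-N-plus} into $M_{d^2} \otimes \cO(S_N^+) \otimes M_{d^2}$ (which is built from exactly the cocycle/crossed-product data you describe) together with an adaptation of \cite{bb-inner}*{Theorem 6.3} to the unitary setting, and your concern that inner faithfulness rather than mere injectivity must be preserved is the right thing to check. So the first half of your plan is sound and matches the paper.

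The genuine gap is in what you call the ``harder base case.'' The inner unitarity of $\cO(S_N^+)$ for $N \notin [6,9]$ is not something this paper proves: it is \cite{BrChFr18}*{Theorem 4.11}, quoted as a black box. You instead set out to re-derive it via topological generation by finite quantum subgroups, and you explicitly concede that you cannot control the joint Hopf image well enough to close the argument --- so as written the proposal does not prove the proposition. The fix is simply to cite the known result; attempting to reconstruct it from unitary error bases is both unnecessary and, on your own account, incomplete. Relatedly, your heuristic for the excluded window has the causality reversed: the exclusion of $n=3$ in Theorem \ref{thm:FO-Inner-Unitary} is a \emph{consequence} of $\dim M_3 = 9$ landing in the gap $[6,9]$ left by the inductive topological-generation scheme of \cite{BrChFr18} for $S_N^+$, not an independent degeneracy of $O_3^+$ that propagates back to $N=9$.
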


\begin{proof}
On the one hand, the Hopf $\ast$-algebras $\cO(S_N^+)$ are inner unitary for all $N$ outside the range $[6,9]$ by \cite{BrChFr18}*{Theorem 4.11}.
On the other, we have an embedding of the form \eqref{eq:O-Aut-B-emb-to-ampl-S-N-plus}.
Then a straightforward adaptation of \cite{bb-inner}*{Theorem 6.3} to the unitary setting implies that $\cO(\Aut^+(B))$ must then also be inner unitary.
\end{proof}

Now, taking $B = M_n$ and using the canonical embedding $\cO(\Aut^+(B)) \subseteq \cO(O_n^+)$, we are able to lift inner unitarity to free orthogonal quantum  groups.

\begin{theorem}\label{thm:FO-Inner-Unitary}
The Hopf $\ast$-algebra $\cO(O_n^+)$ is inner unitary for $n=2$ and $n \geq 4$.
\end{theorem}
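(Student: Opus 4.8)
The plan is to lift inner unitarity from the quotient $\Aut^+(M_n)$ to $O_n^+$ along the central extension into which they fit. Recall from \cite{MR1709109} that $\Aut^+(M_n)$ is the projective quantum group $PO_n^+ = O_n^+/\bZ_2$, so that $\cO(\Aut^+(M_n))$ sits inside $\cO(O_n^+)$ as the Hopf subalgebra of even elements, namely the fixed-point algebra of the parity automorphism $\tau\colon u_{ij} \mapsto -u_{ij}$. Equivalently, there is a central exact sequence $1 \to \bZ_2 \to O_n^+ \to PO_n^+ \to 1$, with surjection $u_{ij} \mapsto \delta_{ij} x$ onto $\cO(\bZ_2)$. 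Since $\dim M_n = n^2$ lies in $[6,9]$ precisely when $n = 3$, Proposition \ref{prop:Aut-Inner-Unitary} supplies, for $n = 2$ and every $n \geq 4$, an inner faithful $\ast$-homomorphism $\pi_0\colon \cO(\Aut^+(M_n)) \to M_d$.

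First I would extend $\pi_0$ to a $\ast$-homomorphism $\pi\colon \cO(O_n^+) \to \widetilde{M}$ into a finite-dimensional C$^\ast$-algebra whose restriction to the even subalgebra $\cO(\Aut^+(M_n))$ remains inner faithful. Concretely, such a $\pi$ is the same data as a self-adjoint unitary orthogonal matrix model $U = (\pi(u_{ij})) \in M_n(\widetilde{M})$, and the constraint is that the products $\pi(u_{ik})\pi(u_{jl})$ reproduce the values that $\pi_0$ already assigns to the degree-two generators $u_{ik}u_{jl}$ of the even part. Thus the task is to take a ``square root'' of the $PO_n^+$-model $\pi_0$: I would enlarge the target to $\widetilde{M} = M_d \otimes M_2$ so that the central $\bZ_2$-sign can be absorbed by the off-diagonal $\bZ_2$-grading of $M_2$, and then solve the overdetermined orthogonality and unitarity relations for $U$, using that $\pi_0$ satisfies the $PO_n^+$-relations to guarantee consistency. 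This lifting of the matrix model across the central $\bZ_2$ is the main obstacle, since it is exactly where the (a priori cohomological) ambiguity of a central extension must be resolved; alternatively one may invoke the lifting mechanism of \cite{bb-inner}*{Theorem 6.3} adapted to the unitary setting, as in Proposition \ref{prop:Aut-Inner-Unitary}.

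Granting such a $\pi$, inner faithfulness then follows formally. Let $\cO(O_n^+) \twoheadrightarrow \cO(H)$ be the Hopf image of $\pi$, with $H \subseteq O_n^+$ the corresponding quantum subgroup. Because $\pi$ restricts to the inner faithful map $\pi_0$ on the even part, the kernel of this quotient meets the Hopf subalgebra $\cO(\Aut^+(M_n))$ in a Hopf ideal contained in $\ker \pi_0$, hence trivially; so $\cO(\Aut^+(M_n))$ embeds as a Hopf subalgebra of $\cO(H)$, and dually $H$ surjects onto $PO_n^+$ with kernel $H \cap \bZ_2$. If $H \cap \bZ_2 = \bZ_2$, then $H$ contains the central $\bZ_2$ and surjects onto $O_n^+/\bZ_2$, forcing $H = O_n^+$, as desired. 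The remaining possibility $H \cap \bZ_2 = \{e\}$ would exhibit $H \cong PO_n^+$ as a complement to the central $\bZ_2$, realizing $\Rep(O_n^+) \cong \Rep(PO_n^+) \boxtimes \Rep(\bZ_2)$; but this is impossible, since the $n$-dimensional fundamental representation $u$ would then have to factor as $\rho \boxtimes \mathrm{sgn}$ for an $n$-dimensional irreducible $\rho$ of $PO_n^+$, whereas the nontrivial irreducibles of $PO_n^+$ have dimensions $n^2-1, n^4-3n^2+1, \dots$, none equal to $n$ for $n \geq 2$. Therefore $H = O_n^+$ and $\pi$ is inner faithful, so $\cO(O_n^+)$ is inner unitary for $n = 2$ and $n \geq 4$.
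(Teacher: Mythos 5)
Your overall strategy---lifting inner unitarity from the Hopf subalgebra $\cO(\Aut^+(M_n)) \subset \cO(O_n^+)$---is the same as the paper's, but the crucial step is left unproved. You need a finite-dimensional $\ast$-representation of $\cO(O_n^+)$ whose restriction to the even subalgebra is still inner faithful, and you propose to get it by solving for a self-adjoint unitary ``square root'' $U \in M_n(M_d \otimes M_2)$ of the given model $\pi_0$ of $\cO(\Aut^+(M_n))$. You correctly flag this as the main obstacle, but then you do not resolve it: there is no reason such a lift of $\pi_0$ itself should exist in $M_d \otimes M_2$ (you are asking to split a central extension at the level of a fixed matrix model, and the degree-two relations are genuinely overdetermined). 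The paper sidesteps the problem entirely by \emph{inducing} rather than extending: it forms the finite-dimensional Hilbert space $\tilde H = \cO(O_n^+) \otimes_{\cO(\Aut^+(M_n))} H$ using the Haar-state-preserving conditional expectation $E$ to define the inner product, notes that $\cO(O_n^+)$ is finitely generated as a right $\cO(\Aut^+(M_n))$-module (by $1$ and the $u_{ij}$), and lets $\cO(O_n^+)$ act by left multiplication. The restriction of this $\tilde\pi$ to the even part is not $\pi_0$ but merely \emph{contains} $\pi_0$ as a direct summand---which is all one needs for inner faithfulness of the restriction. Your fallback of ``invoking \cite{bb-inner}*{Theorem 6.3} adapted to the unitary setting'' is essentially this induction argument, so if you take that route you must verify its hypotheses (finite generation over an adjoint-stable Hopf subalgebra, plus faithful flatness) rather than offer it as an alternative to doing the construction.

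The endgame is also shakier than it needs to be. Your analysis of the Hopf image $H$ via the central extension $1 \to \bZ_2 \to O_n^+ \to PO_n^+ \to 1$ uses several statements that are standard for classical groups but require justification for quantum subgroups: that ``$H \cap \bZ_2$'' is well defined and the dichotomy $H \cap \bZ_2 \in \{\bZ_2, \{e\}\}$ holds, that $H \supseteq \bZ_2$ together with $H \twoheadrightarrow PO_n^+$ forces $H = O_n^+$, and that the complementary case yields a factorization $\Rep(O_n^+) \cong \Rep(PO_n^+) \boxtimes \Rep(\bZ_2)$. (Your dimension count $n^2 - 1 > n$ is fine, but it sits inside an unjustified set-up.) The paper's closing argument is purely categorical and shorter: from $I \cap \cO(\Aut^+(M_n)) = 0$ one gets that the restriction functor preserves the irreducible decomposition of every \emph{even} representation; since $U^{\otimes 2}$ is even for any irreducible $U$ of $O_n^+$, the image of $U^{\otimes 2}$ contains the unit with multiplicity one, and Frobenius reciprocity then gives fullness of the restriction functor, i.e.\ $I = 0$. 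I would replace your central-extension analysis with that argument, or else supply proofs of the quantum analogues you are implicitly using.
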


\begin{proof}
We denote the conditional expectation $\cO(O_n^+) \to \cO(\Aut^+(M_n))$ preserving the Haar state by $E$.
Let $\pi \colon \cO(\Aut^+(M_n)) \to B(H)$ be a $\ast$-homomorphism for some finite-dimensional Hilbert space $H$ that gives the inner unitarity of $\cO(\Aut^+(M_n))$.
Consider the Hilbert space $\tilde H = \cO(O_n^+) \otimes_{\cO(\Aut^+(M_n))} H$, where, as usual, the inner product is given by
\[
(f' \otimes \xi', f \otimes \xi) = (\pi(E(f^* f')) \xi', \xi).
\]
The left multiplication defines a $\ast$-homomorphism $\tilde\pi\colon \cO(O_n^+) \to B(\tilde H)$.

Since $\cO(O_n^+)$ is finitely generated as a right $\cO(\Aut^+(M_n))$-module (we can take $1, u_{i j}$ for $1 \le i, j \le n$ as generators), $\tilde H$ is finite-dimensional.
We claim that $\ker \tilde \pi$ does not contain any nonzero Hopf $\ast$-ideal.

Let $I$ be a Hopf $\ast$-ideal of $\cO(O_n^+)$ contained in $\ker \tilde \pi$, and put $\cO(G) = \cO(O_n^+) / I$.
To show that $I = 0$, it is enough to show that the restriction functor $F\colon \Rep O_n^+ \to \Rep G$ is full.
By Frobenius reciprocity, this is equivalent to the claim that $F(U^{\otimes 2})$ contains the unit with multiplicity one whenever $U$ is a nontrivial irreducible object of $\Rep O_n^+$.

Take a nontrivial irreducible object $U$ from $\Rep O_n^+$.  First suppose that $U \in \Rep \Aut^+(M_n)$.  Then, as  $\tilde \pi|_{\cO(\Aut^+(M_n))}$ contains $\pi$ as a direct summand, we have $I \cap \cO(\Aut^+(M_n)) = 0$.
This implies that $F(U) \in \Rep G$ is a nontrivial irreducible object as well. For the general case of $U$ being an irreducible object from $\Rep O_n^+$, note that we have $U^{\otimes 2} \in \Rep \Aut^+(M_n)$, hence from the above we have that $F(U^{\otimes 2})$ has the same irreducible decomposition as $U^{\otimes 2}$.
But this implies that $F(U^{\otimes 2})$ contains the unit with multiplicity one.
\end{proof}

\begin{remark}
The first half of the above proof is a special case of the one for \cite{MR2681256}*{Theorem 5.7}.
Indeed, the following generalization holds: if $\cO(G)$ is the regular algebra of a compact quantum group $G$, and $A = \cO(G')$ is a inner unitary Hopf $\ast$-subalgebra of $\cO(G)$ closed under the adjoint action, and such that $\cO(G)$ is finitely generated over $A$, then $\cO(G)$ is also inner unitary.
To see this, one can observe that the proof of \cite{MR2681256}*{Theorem 5.7} as its commutativity assumption on $A$ was only to make sure that $\cO(G)$ is faithfully flat over $A$, which holds in the above setting \cite{MR3263140}.
\end{remark}

\section{Strong \texorpdfstring{$1$}{1}-boundedness of quantum automorphism group factors}
\label{sec:S1B}

Let $(\mathcal M,\tau)$ be a tracial von Neumann algebra, and let $X = (X_1,\dots,X_n)$ be an $n$-tuple of self-adjoint elements of $\cM$ and $Y$ another such $m$-tuple.
Recall Voiculescu's relative microstates free entropy $\chi(X:Y)$ introduced in \cite{MR1371236}, which we from now on will refer to as just \emph{relative free entropy}.
Using this, Voiculescu defined the \emph{(modified) free entropy dimension}
\begin{align*}
    \delta_0(X) = n + \lim_{\varepsilon \downarrow 0} \frac{\chi(X + \varepsilon S : S)}{\abs{\log(\varepsilon)}} ,
\end{align*}
where $S = (S_1,\dots,S_n)$ is a free family of semicircular elements free from $X$.
It satisfies $\delta_0(X) \leq n$ and equality is attained for instance when $X$ consists of $n$ free semicircular elements.
Thus a free group factor $\mathcal{L}\mathbb{F}_m$ admits a generating set with the property that its free entropy dimension is precisely $m$.
While it is unknown whether this number is a W$^\ast$-invariant in general, the related property of strong $1$-boundedness introduced by Jung does satisfy this.

\begin{definition}[\cite{MR2373014}]
Let $r \in \mathbb{R}$, then we call $X$ an \emph{$r$-bounded set} if and only if there exists a constant $K \geq 0$ such that for sufficiently small $\varepsilon$ we have the estimate
\begin{align*}
    \chi(X + \varepsilon S : S) \leq (r-n) \abs{\log(\varepsilon)} + K .
\end{align*}
We call $X$ a \emph{strongly $1$-bounded set} if and only if it is a $1$-bounded set and it contains an element with finite free entropy (relative to the empty set).
\end{definition}

Note that being $r$-bounded is a strengthening of the inequality $\delta_0(X) \leq r$.
For a self-adjoint element to have finite free entropy it is sufficient that its spectral measure with respect to $\tau$ admits a bounded density with respect to the Lebesgue measure.

\begin{definition}[\cite{MR2373014}]
A finite von Neumann algebra $\cM$  has \emph{property (J)} if any finite set of self-adjoint generators is $1$-bounded, and is \emph{strongly $1$-bounded} if it admits a strongly $1$-bounded generating set.
\end{definition}

Jung proved a remarkable result that any tracial von Neumann algebra admitting a strongly $1$-bounded generating set has property (J) \cite{MR2373014}.
In particular, these properties are equivalent for finitely generated von Neumann algebras.

Now, let us note the following key technical ingredient, which is a slight modification of \cite{MR2373014}*{Theorem 5.3}.

\begin{proposition}\label{JP-OF}
Let $\mathcal A\subset \mathcal B$ be an inclusion of tracial von Neumann algebras with $\mathcal A$ strongly $1$-bounded.
Let $\{u_j\}_{j=1}^\infty$ be a sequence of unitaries in $B$, and write $\mathcal A_0 = \mathcal A$ and $\mathcal A_k$ for the von Neumann algebra generated by $\mathcal A$ and $\{u_1,\dots,u_k\}$.
Assume that for every $j$ there is a diffuse self-adjoint element $y_j \in \mathcal A_{j-1}$ such that $u_j y_j u_j^* \in \mathcal A_{j-1}$.
Denote by $\mathcal A_\infty$ the von Neumann algebra generated by $\mathcal A$ and all of the $u_j$'s.
Then $\mathcal A_\infty$ has property (J).
\end{proposition}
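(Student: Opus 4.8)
The plan is to prove Proposition \ref{JP-OF} by induction on $k$, showing at each stage that the enlargement $\cA_{j-1} \subset \cA_j$ by a single unitary $u_j$ preserves strong $1$-boundedness, and then to pass to the limit $\cA_\infty$. The heart of the matter is the single-step claim: \emph{if $\cA_{j-1}$ is strongly $1$-bounded and $u = u_j$ is a unitary such that $uyu^* \in \cA_{j-1}$ for some diffuse self-adjoint $y \in \cA_{j-1}$, then $\cA_j = (\cA_{j-1} \cup \{u\})''$ is again strongly $1$-bounded.} The idea behind this step is that the relation $uyu^* = z$ with $y,z \in \cA_{j-1}$ diffuse forces the microstates for $u$ to be rigidly coupled to those of $\cA_{j-1}$: a matricial approximant $U$ of $u$ must approximately conjugate a matrix approximating $y$ into one approximating $z$. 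Since $y$ and $z$ are diffuse self-adjoint elements with the same distribution (being unitarily conjugate in the trace), their matricial microstates have spectra that are nearly simple and nearly equal, so the conjugating unitary $U$ is essentially determined up to a small-dimensional ambiguity (phases on eigenspaces). This is exactly the mechanism in Jung's \cite{MR2373014}*{Theorem 5.3}: the unitary adds no new ``free dimension,'' so the free entropy dimension does not increase past $1$, and strong $1$-boundedness is inherited.

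Concretely, I would set up the microstate counting. Fix a self-adjoint generating tuple $X$ for $\cA_{j-1}$ that is strongly $1$-bounded, and let $a = \operatorname{Re}(u)$, $b = \operatorname{Im}(u)$ so that $(X, a, b)$ generates $\cA_j$. To estimate $\chi((X,a,b) + \varepsilon S : S)$, I would cover the set of microstates $\Gamma((X,a,b);\dots)$ by controlling the $(a,b)$-fibers over a given microstate for $X$. The relation $uyu^* = z$, together with the near-simplicity of the spectrum of the matricial model of $y$, confines the matricial approximants $U$ of $u$ to a neighborhood (in the appropriate operator-norm/Frobenius metric) of a set that is covered by $O(k^\varepsilon)$-dimensional tori of phases, whose $\varepsilon$-covering number contributes only a bounded additive constant to the logarithmic volume after the $\abs{\log \varepsilon}$ normalization. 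Combining this fiberwise estimate with the strong $1$-boundedness bound for $X$ yields an estimate of the form $\chi((X,a,b)+\varepsilon S : S) \le (1 - (\dim_{\mathbb R}(X)+2))\abs{\log\varepsilon} + K'$, which is precisely the strong $1$-bounded estimate for $\cA_j$ (the finite-free-entropy element is inherited from $\cA_{j-1} \subset \cA_j$). Since strong $1$-boundedness implies property (J) by Jung's theorem, and property (J) is a property of the von Neumann algebra rather than of a particular generating set, $\cA_j$ has property (J).

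I would then iterate: each $\cA_j$ is strongly $1$-bounded, hence has property (J). For the passage to $\cA_\infty$, I would invoke the standard fact (also due to Jung, and used throughout this circle of ideas) that property (J) is stable under taking the weak closure of an increasing union: if $\cA_\infty = \overline{\bigcup_k \cA_k}^{\,w}$ with each $\cA_k$ strongly $1$-bounded and the $\cA_k$ forming an increasing chain with a common strongly $1$-bounded ``core'' $\cA$, then any finite self-adjoint generating set of $\cA_\infty$ is $1$-bounded. The key point enabling the limit is that every new generator $u_j$ satisfies the conjugation relation inside $\cA_{j-1}$, so no single stage injects free dimension, and the $1$-boundedness estimates are uniform enough to survive the limit. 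Thus $\cA_\infty$ has property (J).

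The main obstacle I expect is the single-step microstate estimate, specifically making rigorous the claim that the conjugation relation $UY_nU^* \approx Z_n$ (for matricial models $Y_n, Z_n$ of the diffuse elements $y,z$) pins down $U$ up to a low-dimensional set of phases with a covering-number bound that survives the $\abs{\log\varepsilon}$ normalization. This requires quantitative control on the \emph{spectral gaps} of the random-matrix models of $y$: diffuseness of $y$ gives that the limiting spectral distribution has no atoms, but one must translate this into a statement that the microstate matrices have spectra spread out enough that near-conjugating unitaries form a small set. This is the technical core of Jung's argument, and the modification here is to carry it out relative to the subalgebra $\cA_{j-1}$ (so the estimate is conditional on a fixed microstate for $X$) rather than in the absolute setting; verifying that the relative free entropy $\chi(\,\cdot : S)$ behaves well under this fiberwise decomposition is where the care is needed.
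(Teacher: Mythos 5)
Your proposal reconstructs, in sketch form, exactly the argument of Jung's Theorem 5.3, which is all the paper's proof invokes: the paper merely observes that the only change from Jung's hypotheses is that $u_j$ may have atoms provided a separate diffuse self-adjoint $y_j \in \cA_{j-1}$ is conjugated back into $\cA_{j-1}$, and that Jung's Lemmas 5.1 and 5.2 --- precisely the covering-number estimate for approximate conjugators of a diffuse microstate that you flag as your ``main obstacle'' --- apply verbatim. So the approach is the same; the technical core you defer is already in the literature and is exactly what the paper cites.
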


\begin{proof}
The only difference from \cite{MR2373014}*{Theorem 5.3} is that, instead of assuming $u_j u_{j-1} u_j^* \in \mathcal A_{j - 1}$ and that $u_j$ is diffuse (the latter implicit in \cite{MR2373014}), we allow $u_j$ to have atoms and assume another diffuse self-adjoint element $y_j \in \mathcal A_{j-1}$ for the conjugation by $u_j$.
This is still enough to have essentially the same proof as in \cite{MR2373014}*{Theorem 5.3}, noting that \cite{MR2373014}*{Lemmas 5.1 and 5.2} apply.
\end{proof}

Another ingredient we need is the permanence of strong $1$-boundedness under taking amplifications, as follows.

\begin{proposition}[\cite{arXiv:2107.03278}, cf.\citelist{\cite{MR2373014}*{Corollary 3.6}\cite{MR3801429}*{Proposition A.13(ii)}}]\label{S1B-Amp}
Let $\cM$ be a strongly $1$-bounded II$_1$-factor.
For any $t > 0$, the amplification $\cM^t$ is again strongly $1$-bounded.
\end{proposition}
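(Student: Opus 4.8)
The plan is to reduce the statement to the two elementary operations from which every amplification is assembled, and to treat each in turn. For a II$_1$-factor $\cM$ and any $t > 0$, choose an integer $n \geq t$ and a projection $p \in M_n(\cM)$ with $(\mathrm{Tr}_n \otimes \tau)(p) = t/n$; since $M_n(\cM)$ is again a factor, all such projections are equivalent and $\cM^t \cong p M_n(\cM) p$ is well defined. It therefore suffices to prove that (i) $M_n(\cM)$ is strongly $1$-bounded whenever $\cM$ is, and (ii) $p\cN p$ is strongly $1$-bounded whenever $\cN$ is and $p \in \cN$ is a nonzero projection.

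For step (i) I would invoke Proposition \ref{JP-OF} directly. Embed $\cM$ as $\cM \otimes 1 \subseteq \cM \otimes M_n = M_n(\cM)$ and take this copy, which is isomorphic to $\cM$ and hence strongly $1$-bounded, to be $\cA$. Let $S, C \in 1 \otimes M_n$ be the shift and clock unitaries, which generate $1 \otimes M_n$, so that $\cA$ together with $u_1 = S$ and $u_2 = C$ generates all of $M_n(\cM)$. Since $1 \otimes M_n$ commutes with $\cM \otimes 1$, conjugation by either $S$ or $C$ fixes $\cA$ pointwise. Fixing any diffuse self-adjoint $y \in \cA$ (which exists since $\cM$ is diffuse), we have $u_j y u_j^* = y \in \cA_{j-1}$, so the hypotheses of Proposition \ref{JP-OF} are met. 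Hence $M_n(\cM)$ has property (J); being finitely generated, it is strongly $1$-bounded.

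Step (ii), the compression, is where the genuine difficulty lies: writing $\cM_0 = p\cN p$ with $\tau(p) = 1/n$, it amounts to descending strong $1$-boundedness from $M_n(\cM_0) \cong \cN$ to its corner $\cM_0$, which does \emph{not} follow formally from Proposition \ref{JP-OF} since one is now deleting rather than adjoining generators. Here the tool is the behaviour of Voiculescu's free entropy dimension under compression. The relevant microstate estimates rescale the quantity $\delta_0(\cdot) - 1$ by a factor of $n^2$, so that the bound $\delta_0 \le 1$ is preserved and a $1$-bounded generating set of $M_n(\cM_0)$ yields, after compression and the adjunction of finitely many matrix units, a $1$-bounded generating set of $\cM_0$; the finite-free-entropy requirement persists because $\cM_0$ is diffuse and already contains a self-adjoint element with bounded spectral density. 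Carrying the additive constant $K$ through this rescaling, and doing so uniformly in the (possibly irrational) trace of $p$, is the main technical obstacle, and these estimates are exactly those of \cite{arXiv:2107.03278} (compare \cite{MR2373014}*{Corollary 3.6} and \cite{MR3801429}*{Proposition A.13(ii)}). Finally, factoriality of $\cM$ guarantees that $\cM^t$ is independent of the auxiliary choices of $n$ and $p$, which completes the argument for all $t > 0$.
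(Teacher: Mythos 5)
The paper offers no proof of this proposition: it is imported as a black box from \cite{arXiv:2107.03278} (with \cite{MR2373014}*{Corollary 3.6} and \cite{MR3801429}*{Proposition A.13(ii)} as antecedents) and is then used as an ingredient in the proof of Theorem \ref{S1B-SFs}. So the only question is whether your argument stands on its own. Your step (i) does, and is a nice observation: taking $\cA = \cM \otimes 1 \subset M_n(\cM)$ and adjoining the clock and shift unitaries of $1 \otimes M_n$, which commute with $\cA$ and therefore fix any diffuse self-adjoint $y \in \cM \otimes 1$ under conjugation, is a legitimate application of Proposition \ref{JP-OF}; since $M_n(\cM)$ is finitely generated and diffuse (so some finite generating set contains a self-adjoint element of finite free entropy), property (J) upgrades to strong $1$-boundedness. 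This is exactly the device the authors themselves deploy to prove Theorem \ref{S1B-SFs}, so the integer case is handled internally to the paper's toolkit.

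Step (ii) is a genuine gap, and it is where the entire content of the cited result lives. Descending from $M_n(\cM_0)$ to the corner $\cM_0 = pM_n(\cM_0)p$ does not follow from the $\delta_0$ compression heuristic you invoke: strong $1$-boundedness is the quantitative estimate $\chi(X+\varepsilon S : S) \le (1-\ell)\abs{\log\varepsilon} + K$ for an $\ell$-tuple $X$, and one must control the additive constant $K$, not merely the coefficient of $\abs{\log\varepsilon}$, through the microstate compression, and do so uniformly in a possibly irrational trace $t$. Moreover, the exact compression formula for $\delta_0$ is not known in general; the paper's own introduction points out that Jung's subfactor paper establishes only inequalities. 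The clean statement for arbitrary $t>0$ really does require the $1$-bounded entropy framework of \cite{MR3801429} and \cite{arXiv:2107.03278}. You flag this honestly and cite the same sources the paper cites, but the upshot is that your proposal reduces the proposition to its hardest case rather than proving it.
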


Next let us recall some concepts from the theory of subfactors, see \cite{MR1642584} and references therein for details.
Let $\cN \subset \cM$ be a finite index inclusion of II$_1$-factors.
Then, the orthogonal projection $e_1\colon L^2(\cM) \to L^2(\cN)$ is called the \emph{Jones projection} associated with this inclusion.
The von Neumann algebra $\cM_1$ generated by $\cM$ and $e_1$ is again a II$_1$-factor, and the inclusion $\cM \subset \cM_1$, called \emph{the basic construction}, has the same index as $\cN \subset \cM$.
Moreover, $\cM_1$ is the commutant of the right multiplication action of $\cN$ on $L^2(\cM)$.
Iterating this construction, we obtain \emph{the Jones tower}
\begin{align*}
  \cM_{-1} &\subset \cM_0 \subset \cM_1 \subset \dots, &
  \cM_{-1} &= \cN,&
  \cM_0 &= \cM.&
\end{align*}
We denote the Jones projection of $\cM_{i - 1} \subset \cM_i$ by $e_i \in B(L^2(\cM_{i-1}))$, so that $\cM_{i}$ is generated by $\cM_{i-1}$ and  $e_i$.

A key observation is that $\cM_{i-2}$ and $\cM_i^{\opo}$ are commutants of each other on $L^2(\cM_{i-1})$, so that $\cM_i$ and $\cM_{i-2}$ are amplifications of each other.
In particular, $\cM_{2i}$ is always an amplification of $\cM$, while $\cM_{2i-1}$ is always an amplification of $\cN$.

There is also a subfactor $\cN_1 \subset \cN$ such that $\cN \subset \cM$ is isomorphic to its basic construction.
Repeating this construction, we obtain \emph{the Jones tunnel}
\begin{align*}
  \cN_{-1} &\supset \cN_0 \supset \cN_1 \supset \dots ,&
  \cN_{-1} &= \cM,&
  \cN_0 &= \cN ,
\end{align*}
and we obtain corresponding Jones projections $e_{-i} \in \cN_{i-1}$ for $i \geq 1$.
Again, $\cN_{2i}$ is an amplification of $\cN$, while $\cN_{2i-1}$ is an amplification of $\cM$.

\subsection{Strong \texorpdfstring{$1$}{1}-boundedness of quantum automorphism group factors}

We are now ready for the proof of the main technical result of this section.

\begin{theorem}\label{S1B-SFs}
Assume that we have a unital finite index inclusion $\cN\subset \cM$ of II$_1$-factors.
Then $\cN$ is strongly $1$-bounded if and only if $\cM$ is strongly $1$-bounded.
\end{theorem}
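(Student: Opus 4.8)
The plan is to reduce both implications to a single \emph{propagation} statement: if $\cR \subset \cP$ is a finite-index inclusion of II$_1$-factors with basic construction $\cP \subset \cS = \langle \cP, e\rangle$, and $\cP$ is strongly $1$-bounded, then $\cS$ is strongly $1$-bounded. Granting this, the theorem follows from the subfactor machinery recalled above. For the implication ``$\cM$ strongly $1$-bounded $\Rightarrow \cN$ strongly $1$-bounded'', I would pass to the basic construction $\cM \subset \cM_1 = \langle \cM, e_1\rangle$ of $\cN \subset \cM$; the propagation statement makes $\cM_1$ strongly $1$-bounded, and since $\cM_1$ is an amplification of $\cN$, Proposition \ref{S1B-Amp} yields that $\cN$ is strongly $1$-bounded. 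For the reverse implication, I would use the Jones tunnel to realize $\cN \subset \cM$ itself as the basic construction of a deeper inclusion $\cN_1 \subset \cN$, so that the propagation statement applied directly to $\cN \subset \cM$ gives that $\cM$ is strongly $1$-bounded.

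The heart of the argument is the propagation step, which I would establish using Proposition \ref{JP-OF}. The point is that $\cS$ is generated over $\cP$ by the single self-adjoint unitary $u = 2e-1$, since $\langle \cP, e\rangle = \langle \cP, u\rangle$. To invoke Proposition \ref{JP-OF} with $\cA = \cP$ and the one-element family $\{u\}$, I must produce a diffuse self-adjoint $y \in \cP$ with $u y u^* \in \cP$. The key observation is that the Jones projection $e$ commutes with the downward subfactor $\cR$, so any self-adjoint $y \in \cR$ commutes with $u$ and satisfies $u y u^* = (2e-1)^2 y = y \in \cR \subset \cP$. As $\cR$ is a II$_1$-factor it contains diffuse self-adjoint elements, so such a $y$ exists, and Proposition \ref{JP-OF} shows that $\cS = \langle \cP, u\rangle$ has property (J). Finally, $\cP$ is finitely generated because it is strongly $1$-bounded, hence so is $\cS$; since property (J) and strong $1$-boundedness agree for finitely generated factors, $\cS$ is strongly $1$-bounded.

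The main obstacle is the verification in the previous paragraph: Proposition \ref{JP-OF} is tailored to adjoining \emph{unitaries} that conjugate a diffuse element back into the smaller algebra, whereas the natural additional generator of a basic construction is a \emph{projection}. The resolution is to pass to the symmetry $u = 2e-1$ and, crucially, to look for the conjugation-invariant diffuse element not in $\cP$ at large but in the downward subfactor $\cR$, on which $e$ and hence $u$ act trivially. Once this is arranged, the remaining ingredients---finite generation, the existence of diffuse self-adjoint elements in a factor, the equivalence of property (J) with strong $1$-boundedness in the finitely generated case, and the bookkeeping with the Jones tower and tunnel---are standard. The only point I would double-check is that the precise amplification parameter relating $\cM_1$ to $\cN$ is irrelevant, which is the case since Proposition \ref{S1B-Amp} holds for every amplification $t > 0$.
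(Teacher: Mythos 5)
Your proposal is correct and follows essentially the same route as the paper: both directions reduce to showing that the basic construction $\langle \cP, e\rangle$ of a finite-index inclusion $\cR \subset \cP$ inherits strong $1$-boundedness, via Proposition \ref{JP-OF} applied to the symmetry $2e-1$ together with a diffuse self-adjoint element of the downward factor $\cR$ (which commutes with $e$), followed by Proposition \ref{S1B-Amp} and the Jones tunnel exactly as you describe. The paper's proof differs only cosmetically (it uses $1-2e_1$ and pads the family of unitaries with $u_n = 1$ for $n>1$).
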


\begin{proof}
Assume that $\cM$ is strongly $1$-bounded.
To show that $\cN$ is strongly $1$-bounded, it is enough to have the same for $\cM_1$ by Proposition \ref{S1B-Amp}.

Take the first Jones projection $e_1$ as above, and set
\begin{align*}
u_1 &= 1 - 2 e_1,&
u_n &= 1 \quad (n > 1).
\end{align*}
We want to use Proposition \ref{JP-OF} for $A = \cM$, $B = \cM_2$, and $\{u_n \} \subset B$, to conclude that $\cM_1 = \cM \vee \{ u_n \mid n = 1, 2, \dots \}$ is strongly $1$-bounded.
It is enough to find a diffuse self-adjoint element $x \in \cM$ that commutes with $e_1$, as we would have
\begin{align*}
  u_1 x u_1^* = x \in \cM.
\end{align*}
Since $e_1$ is in the commutant of $\cN$, any choice of diffuse $x = x^* \in \cN$ will do.

Conversely, assume that $\cN$ is strongly $1$-bounded.
We can then simply apply the same argument as before.
Indeed, $\cM$ is generated by $\cN$ and the Jones projection for $\cN_1 \subset \cN$.
\end{proof}

\begin{corollary}
Assume that we have a unital finite index inclusion $\cN\subset \cM$ of II$_1$-factors.
If at least one of $\cN$ and $\cM$ is strongly $1$-bounded, then all of the II$_1$-factors in the Jones tower and tunnel are strongly $1$-bounded.
\end{corollary}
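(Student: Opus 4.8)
The plan is to recognize that the Jones tower and the Jones tunnel together form a single bi-infinite chain of finite-index inclusions of II$_1$-factors, and then propagate strong $1$-boundedness along this chain by iterating Theorem \ref{S1B-SFs}.

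First I would record that consecutive members of both the tower and the tunnel are finite-index inclusions of II$_1$-factors. By construction, each basic construction $\cM_{i-1} \subset \cM_i$ has the same index as the original inclusion $\cN \subset \cM$, so every step of the tower is finite-index; dually, each step $\cN_i \subset \cN_{i-1}$ of the tunnel is finite-index for the same reason. Since $\cN_0 = \cN = \cM_{-1}$ and $\cN_{-1} = \cM = \cM_0$, the tunnel and tower splice together into one increasing chain
\[
\dots \subset \cN_1 \subset \cN \subset \cM \subset \cM_1 \subset \dots,
\]
in which every consecutive inclusion is a unital finite-index inclusion of II$_1$-factors.

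Next I would apply Theorem \ref{S1B-SFs} to each consecutive inclusion in this chain. That theorem says precisely that for such an inclusion the smaller factor is strongly $1$-bounded if and only if the larger one is, so strong $1$-boundedness is constant along adjacent pairs; a straightforward induction then extends the equivalence to all members of the chain. As both $\cN$ and $\cM$ lie in the chain, the hypothesis that at least one of them is strongly $1$-bounded forces every factor in the Jones tower and tunnel to be strongly $1$-bounded.

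I do not expect a genuine obstacle here: the statement is a formal iteration of Theorem \ref{S1B-SFs}, and the only point requiring care is the bookkeeping that glues the tower and tunnel into a single chain together with the verification that each consecutive inclusion is finite-index, both immediate from the standard subfactor facts recalled above. One could alternatively combine Proposition \ref{S1B-Amp} with Theorem \ref{S1B-SFs}, using that $\cM_{2i}$ and $\cN_{2i-1}$ are amplifications of $\cM$ while $\cM_{2i-1}$ and $\cN_{2i}$ are amplifications of $\cN$, but iterating Theorem \ref{S1B-SFs} directly along the chain is the cleanest route.
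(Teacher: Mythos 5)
Your argument is correct and is exactly the intended one: the paper states this corollary without proof precisely because it is the formal iteration of Theorem \ref{S1B-SFs} along the spliced chain $\dots \subset \cN_1 \subset \cN \subset \cM \subset \cM_1 \subset \dots$, whose consecutive inclusions are all unital finite-index inclusions of II$_1$-factors. Your alternative remark via Proposition \ref{S1B-Amp} and the amplification structure of the tower and tunnel is also consistent with the facts the paper records.
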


\begin{example}\label{ex:twist-by-J}
An interesting source of finite index inclusions comes from the graded twists of compact quantum groups introduced in \cite{MR3580173}.
Using this technique, one can realise $L^\infty(O_{2m}^{+J})$ as an index $4$ subfactor of $L^\infty(O_{2m}^+) \otimes M_2$ (Examples 3.3 and 2.17 in \cite{MR3580173}).
Concretely, one obtains $\cO(O_{2m}^{+J})$ as a Hopf subalgebra of $\cO(O_{2m}^+) \rtimes_\alpha \mathbb{Z}_2$ via the embedding $u_{ij}^J \mapsto u_{ij} \otimes g$, where $g$ is the generator of $\mathbb{Z}_2$ and $\alpha(u) = - J_{2m} u J_{2m}$.
One then takes the crossed product by the dual action.

It was shown in \cite{MR3783410} that $L^\infty(O_n^+)$ is strongly $1$-bounded for all $n \geq 3$.
Building upon their techniques, the case of $L^\infty(O_{2m}^{+J})$ was settled for all $m\geq 2$ in \cite{MR4288353}.
With Theorem \ref{S1B-SFs} and the graded twist technique in hand, we obtain an alternative proof for the latter case.
\end{example}

\begin{corollary}\label{col:S1B-QAG}
Let $B$ be a C$^\ast$-algebra such that $\dim B = n^2$ with $n \geq 3$.
Then $L^\infty(\Aut^+(B))$ is a strongly $1$-bounded II$_1$-factor.
\end{corollary}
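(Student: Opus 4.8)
The plan is to reduce the claim for general $B$ with $\dim B = n^2$ to the single known case $B = M_n$, using the transfer result of Theorem~\ref{S1B-SFs} as the engine for moving strong $1$-boundedness across finite index inclusions. First I would recall the two structural inputs already available: on the one hand, $L^\infty(O_n^+)$ is strongly $1$-bounded for all $n \ge 3$ by \cite{MR3783410}; on the other hand, $L^\infty(\Aut^+(M_n))$ sits inside $L^\infty(O_n^+)$ as an index $2$ subfactor (this is the index $2$ inclusion $L^\infty(\Aut^+(M_n)) \subset L^\infty(O_n^+)$ mentioned in the introduction, reflecting that $\Aut^+(M_n)$ is the projective version of $O_n^+$). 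Applying Theorem~\ref{S1B-SFs} to this inclusion immediately yields that $L^\infty(\Aut^+(M_n))$ is strongly $1$-bounded.

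The next step is to propagate this from the matrix case $B = M_n$ to an arbitrary $B$ with the same dimension $\dim B = n^2$. Here the key tool is the crossed product isomorphism from Theorem~\ref{THA} (equivalently the embeddings in \eqref{eq:O-Aut-B-emb-to-ampl-S-N-plus} together with Corollaries~\ref{cor:cross-prod-isoms} and~\ref{cor:dbl-cross-prod-is-mat-amp}). Since both $\Aut^+(B)$ and $\Aut^+(M_n)$ are monoidally equivalent (they share $\dim B = \dim M_n \cdot \text{(as }N=n^2\text{)}$, so $\delta$ matches and Theorem~\ref{thm:derijdt-vandervennet} applies), the crossed-product machinery produces finite index embeddings of $L^\infty(\Aut^+(B))$ into a common overfactor that is also a finite index overfactor of $L^\infty(\Aut^+(M_n))$ (up to matrix amplification and crossed products by the finite Abelian group $\Gamma$). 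Concretely, I would arrange a chain of finite index inclusions linking $L^\infty(\Aut^+(B))$ to an amplification of $L^\infty(\Aut^+(M_n))$, since a crossed product by a finite group is a finite index extension and matrix amplification preserves strong $1$-boundedness by Proposition~\ref{S1B-Amp}.

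Running strong $1$-boundedness along this chain is exactly where Theorem~\ref{S1B-SFs} does the work: each inclusion in the chain is a unital finite index inclusion of II$_1$-factors, so strong $1$-boundedness transfers in either direction regardless of which side we start from. I would start from the established strong $1$-boundedness of $L^\infty(\Aut^+(M_n))$, pass to its amplification (Proposition~\ref{S1B-Amp}), then move across the finite index relations supplied by Theorem~\ref{THA} to reach $L^\infty(\Aut^+(B))$. I expect the main obstacle to be bookkeeping rather than a conceptual gap: one must verify that all the relevant algebras are genuinely II$_1$-factors (so that the subfactor framework of Theorem~\ref{S1B-SFs} applies) and that the crossed products by $\Gamma$ and the matrix amplifications indeed assemble into a chain of \emph{finite index} factor inclusions with the right trace-compatibility. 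The factoriality of $L^\infty(\Aut^+(B))$ for $\dim B \ge 5$ is known from the literature cited in the introduction \cites{MR3138849,DeFrYa14}, so the remaining care is simply ensuring the inclusions are of finite index and that Theorem~\ref{S1B-SFs} can be invoked at each link.
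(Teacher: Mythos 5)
Your proposal matches the paper's proof in both strategy and ingredients: start from the strong $1$-boundedness of $L^\infty(O_n^+)$, transfer it across the index $2$ inclusion to $L^\infty(\Aut^+(M_n))$ via Theorem \ref{S1B-SFs}, and then apply Corollary \ref{cor:dbl-cross-prod-is-mat-amp} twice to connect $L^\infty(\Aut^+(M_n))$ and $L^\infty(\Aut^+(B))$ through finite index inclusions of $L^\infty(S_{n^2}^+)$ into their matrix amplifications, invoking Proposition \ref{S1B-Amp} and Theorem \ref{S1B-SFs} at each stage. The only cosmetic difference is that the paper names $L^\infty(S_{n^2}^+)$ explicitly as the common intermediate factor and works with the composite finite index inclusions (thereby avoiding any question about factoriality of the intermediate crossed products), whereas you leave this as bookkeeping.
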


\begin{proof}
The factoriality of $L^\infty(\Aut^+(B))$ is proved in \cite{MR3138849}, so it remains to prove the strong $1$-boundedness.

Since $L^\infty( \Aut^+( M_n ) )$ is an index $2$ subfactor of $L^\infty(O_n^+)$ \cite{MR3138849}, it must be strongly $1$-bounded as soon as $n \geq 3$. By Corollary \ref{cor:dbl-cross-prod-is-mat-amp}, $L^\infty( S_{n^2}^+ )$ is a finite index subfactor of $L^\infty( \Aut^+( M_n ) ) \otimes M_k$ for some $k$.
Hence $L^\infty(S_{n^2}^+)$ must be strongly $1$-bounded.
As for the general $B$ with $\dim B = n^2$, again by Corollary  \ref{cor:dbl-cross-prod-is-mat-amp} we have a finite index inclusion of $L^\infty( S_{n^2}^+ )$ in $L^\infty( \Aut^+( B ) ) \otimes M_\ell$ for some $\ell$, hence $L^\infty( \Aut^+( B ) )$ must be strongly $1$-bounded.
\end{proof}

Similar arguments applied to embeddings for the linking algebras discussed in Remark \ref{rem:FIELA} give the following additional corollary.

\begin{corollary}
Let $B_i$ for $i=1,2$ be finite dimensional C$^\ast$-algebras such that $\dim B_1 = \dim B_2 = n^2$ with $n \geq 3$.
Then the linking algebra $\cO_{12} = \cO(\Aut^+(B_1),\Aut^+(B_2))$ is non-trivial.
Let $\mathcal{L}_{12}$ be the tracial von Neumann algebra coming from the GNS-construction for the trace $\omega_{12}$ on $\cO_{12}$.
Then $\mathcal{L}_{12}$ is strongly $1$-bounded.
\end{corollary}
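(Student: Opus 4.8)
The plan is to show that $\mathcal{L}_{12}$ is, up to a common matrix amplification, isomorphic to $L^\infty(S_N^+)$ with $N = n^2$, and then to transport strong $1$-boundedness across this identification using its amplification invariance. First I would dispose of non-triviality: since $\dim B_1 = \dim B_2 = n^2$, the Plancherel traces are $\delta$-forms with $\delta_1 = \delta_2 = \sqrt{n^2}$, so by Theorem \ref{thm:derijdt-vandervennet} the quantum groups $\Aut^+(B_1)$ and $\Aut^+(B_2)$ are monoidally equivalent. As noted after that theorem, the nontriviality of the universal linking algebra is precisely the existence of such an equivalence, whence $\cO_{12} \neq 0$ and the invariant trace $\omega_{12}$ is a faithful trace (both groups being of Kac type), so that $\mathcal{L}_{12}$ is a finite tracial von Neumann algebra.

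Next I would realise $\cO_{12}$ as a cocycle deformation of $\cO(S_N^+)$. By Theorem \ref{thm:cocycle-twist-rel} we may write $\Aut^+(B_i) \cong (S_N^+)^{\sigma_i}$, where each $\sigma_i$ is a dual $2$-cocycle induced from a finite abelian subgroup $\Gamma_i < S_N$. Extending the one-sided identity $\cO(G^\sigma, G) = {}_\sigma\cO(G)$ recorded after Remark \ref{rem:comparison-of-twists}, the linking algebra $\cO_{12} = \cO((S_N^+)^{\sigma_1}, (S_N^+)^{\sigma_2})$ is the two-sided deformation ${}_{\sigma_1}\cO(S_N^+)_{\sigma_2^{-1}}$, obtained by twisting the product of $\cO(S_N^+)$ on the left by $\sigma_1$ (via left translation) and on the right by $\sigma_2^{-1}$ (via right translation). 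Because left and right translations commute, $\cO(S_N^+)$ carries a trace-preserving action of the finite abelian group $\Lambda = \Gamma_1 \times \Gamma_2$, and $\sigma_1 \boxtimes \sigma_2^{-1}$ is a $2$-cocycle on $\hat\Lambda$ whose associated deformation is exactly $\cO_{12}$. Feeding this $\Lambda$-action and cocycle into the general cocycle-deformation scheme of Corollary \ref{cor:cross-prod-isoms} produces a trace-preserving isomorphism
\[
\cO(S_N^+) \rtimes \Lambda \cong \cO_{12} \rtimes \Lambda,
\]
which is the natural analogue of Remark \ref{rem:FIELA} (the case $B_2 = \bC^N$). Passing to von Neumann algebras and applying Takesaki--Takai duality as in Corollary \ref{cor:dbl-cross-prod-is-mat-amp}, crossing both sides by $\Lambda$ once more turns each into a matrix amplification, yielding a trace-preserving isomorphism
\[
\mathcal{L}_{12} \otimes M_k \cong L^\infty(S_N^+) \otimes M_k, \qquad k = \abs{\Lambda}.
\]

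With this in hand the conclusion is formal. Since $N = n^2$ with $n \geq 3$, Corollary \ref{col:S1B-QAG} applied to $B = \bC^N$ (so that $\Aut^+(\bC^N) = S_N^+$) shows $L^\infty(S_N^+)$ is a strongly $1$-bounded II$_1$-factor, hence so is $L^\infty(S_N^+) \otimes M_k$ by Proposition \ref{S1B-Amp}. Transporting across the isomorphism above, $\mathcal{L}_{12} \otimes M_k$ is a strongly $1$-bounded factor; as tensoring a von Neumann algebra with nontrivial centre by $M_k$ again has nontrivial centre, $\mathcal{L}_{12}$ must itself be a II$_1$-factor. Finally $\mathcal{L}_{12} \otimes M_k$ is the $k$-amplification of $\mathcal{L}_{12}$, so Proposition \ref{S1B-Amp} with $t = 1/k$ pushes strong $1$-boundedness back down to $\mathcal{L}_{12}$.

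The step I expect to be the main obstacle is the identification in the middle paragraph: verifying that the linking algebra between the two twists is genuinely the two-sided deformation ${}_{\sigma_1}\cO(S_N^+)_{\sigma_2^{-1}}$ and that it slots into the crossed-product scheme for the single finite abelian group $\Lambda = \Gamma_1 \times \Gamma_2$. The one-sided structure and the $B_2 = \bC^N$ instance of Remark \ref{rem:FIELA} are already available, so the work is to check that the commuting left/right translation gradings assemble into one finite abelian action carrying the product cocycle $\sigma_1 \boxtimes \sigma_2^{-1}$ — in particular that $B_1$ and $B_2$ having different block decompositions does not disturb the abelian structure of $\Lambda$ or the trace-preservation needed for the amplification isomorphism. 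Once this is secured, every remaining ingredient is a direct citation of results established earlier in the excerpt.
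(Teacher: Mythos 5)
Your setup is sound: non-triviality via Theorem \ref{thm:derijdt-vandervennet}, the identification of $\cO_{12}$ with the two-sided deformation $\tensor[_{\sigma_1}]{\cO(S_N^+)}{_{\sigma_2^{-1}}}$, and the resulting trace-preserving isomorphism $\cO(S_N^+)\rtimes\Lambda\cong\cO_{12}\rtimes\Lambda$ for $\Lambda=\Gamma_1\times\Gamma_2$ all match the route the paper intends (``similar arguments applied to the embeddings of Remark \ref{rem:FIELA}''). The gap is in the step where you cross by $\Lambda$ a second time and claim $\mathcal{L}_{12}\otimes M_k\cong L^\infty(S_N^+)\otimes M_k$. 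Takesaki--Takai duality turns $\cO_{12}\rtimes\Lambda\rtimes\hat\Lambda$ into $\cO_{12}\otimes M_k$ only when the second crossed product is taken with respect to the dual action on $\cO_{12}\rtimes\Lambda$; transported through the isomorphism, this is \emph{not} the dual action on $\cO(S_N^+)\rtimes\Lambda$, so the left-hand side does not simultaneously collapse to $\cO(S_N^+)\otimes M_k$. This is exactly the phenomenon visible in Corollary \ref{cor:dbl-cross-prod-is-mat-amp} and Theorems \ref{T: crossed product isomorphism} and \ref{T: crossed product isomorphism 2}: the double crossed product of $\cO(S_N^+)$ is an amplification of $\cO(\Aut^+(B))$, not of $\cO(S_N^+)$, and conversely. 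If your claimed isomorphism were available one could run the same argument with $B_2=\bC^N$ and conclude that $L^\infty(\Aut^+(B))$ and $L^\infty(S_N^+)$ are stably isomorphic, which is far beyond what the paper establishes.

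What the construction actually gives is a unital trace-preserving inclusion of finite index
\[
L^\infty(S_N^+)\;\subset\;(L^\infty(S_N^+)\rtimes\Lambda)\rtimes_\beta\Lambda\;\cong\;\mathcal{L}_{12}\otimes M_k,
\]
and to transport strong $1$-boundedness across such an inclusion you must invoke Theorem \ref{S1B-SFs}, the main technical result of the section, which your proof never uses; Proposition \ref{S1B-Amp} alone cannot bridge a finite index inclusion that is not an amplification. (To apply Theorem \ref{S1B-SFs} you also need $\mathcal{L}_{12}\otimes M_k$ to be a II$_1$-factor; your factoriality argument currently leans on the faulty isomorphism, so this needs a separate justification, e.g.\ via the finite-dimensionality of the relative commutant together with irreducibility of the linking algebra.) Once Theorem \ref{S1B-SFs} is brought in, the argument closes exactly as in Corollary \ref{col:S1B-QAG}: $L^\infty(S_N^+)$ is strongly $1$-bounded for $N=n^2$, $n\ge 3$, hence so is $\mathcal{L}_{12}\otimes M_k$, hence so is $\mathcal{L}_{12}$ by Proposition \ref{S1B-Amp}.
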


\subsection{Lack of strong \texorpdfstring{$1$}{1}-boundedness for free unitary group factors}

We next show that the free unitary quantum groups $U_n^+$ give II$_1$-factors $L^\infty(U_n^+)$ that are not strongly $1$-bounded, in contrast to the quantum groups $O_m^+$ and $\Aut^+(B)$.

Let us begin with a few remarks on the behaviour of $r$-boundedness under algebraic manipulations and free products.
It is well known that the value of the free entropy dimension of some finite tuple of self-adjoint elements in a tracial von Neumann algebra depends only on the generated $\ast$-algebra \cite{MR1601878}.
We show that this also holds for $r$-boundedness.

\begin{proposition}
Let $(A,\tau)$ be a tracial von Neumann algebra and let $X = (X_1,\dots,X_n)$ and $Y = (Y_1,\dots,Y_m)$ be finite tuples of self-adjoint elements of $A$.
Assume that $X$ and $Y$ generate the same $\ast$-algebra.
Then $X$ is $r$-bounded if and only if $Y$ is $r$-bounded.
\end{proposition}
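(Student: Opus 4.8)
The plan is to mirror Voiculescu's proof that $\delta_0$ depends only on the generated $\ast$-algebra \cite{MR1601878}, but to retain the full estimate defining $r$-boundedness rather than only its leading logarithmic coefficient. Since $X$ and $Y$ generate the same $\ast$-algebra, each $Y_j$ is a self-adjoint $\ast$-polynomial $P_j(X)$ and each $X_i$ is a self-adjoint $\ast$-polynomial $Q_i(Y)$. First I would reduce the statement to a single elementary move, namely adjoining one dependent generator. Concretely, interpolate
\[
X \;\rightsquigarrow\; (X,Y_1) \;\rightsquigarrow\; \cdots \;\rightsquigarrow\; (X,Y) \;\rightsquigarrow\; (X_2,\dots,X_n,Y) \;\rightsquigarrow\; \cdots \;\rightsquigarrow\; Y,
\]
where each forward step adjoins some $Y_j = P_j(X)$ and each later step deletes some $X_i = Q_i(Y)$. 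At every step the adjoined or deleted element lies in the $\ast$-algebra generated by the elements that are kept, since every tuple occurring past the midpoint contains all of $Y$. Thus it suffices to prove, for a fixed self-adjoint $\ast$-polynomial $P$, that $X$ is $r$-bounded if and only if $(X,P(X))$ is $r$-bounded, with the \emph{same} value of $r$.

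The heart of the matter is the two-sided estimate
\[
\chi\big((X,P(X))+\varepsilon(S,S_0):(S,S_0)\big) = \chi(X+\varepsilon S:S) + \log\varepsilon + O(1)\qquad(\varepsilon\downarrow 0),
\]
where $S=(S_1,\dots,S_n)$ and $S_0$ is a single semicircular element, all free and free from $X$, with the $O(1)$ bounded uniformly for small $\varepsilon$. The upper bound is the standard computation used for $\delta_0$: a microstate for $(X,P(X))+\varepsilon(S,S_0)$ projects to one for $X+\varepsilon S$, and, since $P$ is Lipschitz on operator-norm balls, the last matricial coordinate is forced into a ball of radius $O(\varepsilon)$ about $P$ of the first coordinates, whose logarithmic volume contributes the term $\log\varepsilon$ up to a bounded constant. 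For the matching lower bound I would extend any microstate for $X+\varepsilon S$ to one for $(X,P(X))+\varepsilon(S,S_0)$ by placing $P$ of the first coordinates into the last slot and adding an independent $\varepsilon$-scaled GUE matrix; asymptotic freeness of the independent GUE from the fixed microstate shows this fills a set of the required volume, giving the same $\log\varepsilon$ contribution from below.

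With this estimate in hand the bookkeeping is immediate and, crucially, shows $r$ is preserved. If the $n$-tuple $X$ satisfies $\chi(X+\varepsilon S:S)\le (r-n)\abs{\log\varepsilon}+K$, then, using $\log\varepsilon = -\abs{\log\varepsilon}$, the upper estimate gives $\chi\big((X,P(X))+\varepsilon(S,S_0):(S,S_0)\big)\le (r-(n+1))\abs{\log\varepsilon}+K'$, which is exactly $r$-boundedness for the $(n+1)$-tuple $(X,P(X))$; the reverse implication uses the lower estimate in the identical way. I expect the main obstacle to be the lower bound in the displayed estimate: one must control the additive $O(1)$ \emph{uniformly} in $\varepsilon$ (a leading-order statement, as suffices for $\delta_0$, is not enough here), which requires tracking the volume of the $\varepsilon$-neighbourhood and the GUE perturbation with constants independent of $\varepsilon$, together with the standard cutoff and finiteness issues (the value $\chi=-\infty$ poses no difficulty, as it trivially satisfies any upper bound). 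Concatenating the finitely many elementary moves and invoking the symmetry between $X$ and $Y$ then completes the argument.
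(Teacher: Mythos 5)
Your high-level reduction is the same as the paper's: adjoin the $Y_j$'s one at a time as polynomials in $X$, then delete the $X_i$'s, which are polynomials in the retained $Y$. The difference is that the paper treats both steps as pure citations --- the adjoining step is exactly \cite{MR4288353}*{Lemma 4.1} and the deletion step is \cite{MR2373014}*{Lemma 3.1} --- whereas you propose to reprove the underlying estimates from scratch as a quantitative version of Voiculescu's algebraic invariance of $\delta_0$. That is a legitimate plan, and your bookkeeping (the $\log\varepsilon$ term converting $(r-n)$ into $(r-(n+1))$ and back) is exactly right; your upper bound is the standard Lipschitz-plus-volume-of-an-$O(\varepsilon)$-ball argument and is fine.

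The concrete problem is in your lower bound. You propose to extend a microstate $A$ for $X+\varepsilon S$ by ``placing $P$ of the first coordinates into the last slot'' and adding an independent $\varepsilon$-scaled GUE matrix $G$. But $P(A)$ approximates $P(X+\varepsilon S)$, not $P(X)$, so the tuple $(A, P(A)+\varepsilon G)$ approximates the joint distribution of $\bigl(X+\varepsilon S,\, P(X+\varepsilon S)+\varepsilon S_0'\bigr)$ rather than of $\bigl(X+\varepsilon S,\, P(X)+\varepsilon S_0\bigr)$. These laws differ by $O(\varepsilon)$ in norm, which is harmless for volume counts but fatal for membership in the microstate set $\Gamma(\,\cdot\,;k,l,\gamma)$ once the tolerance $\gamma$ is taken much smaller than $\varepsilon$ --- and the definition of $r$-boundedness requires the estimate for each fixed $\varepsilon$ after $\gamma\downarrow 0$. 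The fix is to choose, for each $A$, a companion microstate $\Sigma$ for $S$ (which exists by the definition of relative microstates) and use $B=P(A-\varepsilon\Sigma)+\varepsilon\Sigma_0$, with $\Sigma_0$ ranging over the nearly-full-measure set of matrices asymptotically free from $(A,\Sigma)$; this set has the same Lebesgue volume as your proposed one (it is a translate) but actually lies in the correct fiber. This extension-of-microstates argument, with uniform constants, is precisely the content of \cite{MR2373014}*{Lemma 3.1} (stated there in the more robust covering-number formulation), so if you do not want to carry out the details you should simply cite it, as the paper does.
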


\begin{proof}
Assume that $X$ is $r$-bounded.
By assumption there exist noncommutative polynomials $P_1,\dots,P_m$ in $n$ variables such that $Y_j = P_j(X)$ for $1\leq j \leq m$.
It is clear that the conditions of \cite{MR4288353}*{Lemma 4.1} are met, and so we can conclude that $X \cup Y$ is $r$-bounded.
But then \cite{MR2373014}*{Lemma 3.1} implies that $Y$ is $r$-bounded.
By symmetry, we are done.
\end{proof}

We now investigate free products of $r_i$-bounded sets.
We get at least one estimate for free from the subadditivity of relative free entropy.

\begin{lemma}
Let $X^{(1)},\dots,X^{(i)}$ for some $i \geq 2$ be such that $X^{(j)}$ is an $r_j$-bounded set for $1 \leq j \leq i$.
Then $\cup_j X^{(j)}$ is $(r_1 + \dots + r_i)$-bounded.
\end{lemma}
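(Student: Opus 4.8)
The plan is to read the bound off directly from the subadditivity of Voiculescu's relative microstates free entropy, combined with the monotonicity of $\chi(\,\cdot : \cdot\,)$ under enlarging the conditioning family. Write $n_j = \abs{X^{(j)}}$ for the number of entries of each tuple and set $n = \sum_j n_j$, the size of the union $X := \bigcup_j X^{(j)}$. First I would fix a single free semicircular family $S = (S^{(1)}, \dots, S^{(i)})$, free from $X$, with each block $S^{(j)}$ of length $n_j$. Then each $S^{(j)}$ is a semicircular family free from $X^{(j)}$, so it is an admissible perturbing family for the $r_j$-boundedness hypothesis on $X^{(j)}$ (and since $\chi(X^{(j)} + \varepsilon S^{(j)} : S^{(j)})$ depends only on the $\ast$-distribution of $(X^{(j)}, S^{(j)})$, using this global $S^{(j)}$ gives the same value as in the original definition). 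Moreover $X + \varepsilon S$ decomposes blockwise as $(X^{(j)} + \varepsilon S^{(j)})_j$, so the relative entropy for the union is literally computed from these perturbed blocks.

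The key step is the inequality
\begin{equation*}
\chi\Bigl( \textstyle\bigcup_j (X^{(j)} + \varepsilon S^{(j)}) : S \Bigr) \;\le\; \sum_{j=1}^i \chi\bigl( X^{(j)} + \varepsilon S^{(j)} : S^{(j)} \bigr),
\end{equation*}
which I would obtain in two moves. Ordinary subadditivity of relative free entropy, with the common conditioning family $S$, gives $\chi(\bigcup_j (X^{(j)} + \varepsilon S^{(j)}) : S) \le \sum_j \chi(X^{(j)} + \varepsilon S^{(j)} : S)$. Next, since enlarging the conditioning family only shrinks the projected microstate sets that define the relative entropy, one has $\chi(X^{(j)} + \varepsilon S^{(j)} : S) \le \chi(X^{(j)} + \varepsilon S^{(j)} : S^{(j)})$ for each $j$. (Equivalently, I could appeal directly to the matched-block form of subadditivity: the volume of joint $X$-microstates compatible with fixed $S$-microstates is bounded by the product of the per-block volumes, which packages both steps at once.)

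With the key inequality in hand the remainder is bookkeeping. By $r_j$-boundedness of $X^{(j)}$ there are constants $K_j \ge 0$ with $\chi( X^{(j)} + \varepsilon S^{(j)} : S^{(j)}) \le (r_j - n_j)\abs{\log \varepsilon} + K_j$ for all sufficiently small $\varepsilon$. Summing over $j$ and using $\sum_j n_j = n$ yields
\begin{equation*}
\chi\bigl( X + \varepsilon S : S \bigr) \le \Bigl( \textstyle\sum_j r_j - n \Bigr)\abs{\log \varepsilon} + \sum_j K_j
\end{equation*}
for all sufficiently small $\varepsilon$, which is exactly the defining estimate for $X = \bigcup_j X^{(j)}$ to be $(r_1 + \dots + r_i)$-bounded, with constant $K = \sum_j K_j$.

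The only genuinely delicate point is the reduction of the conditioning family from the full $S$ to the matched block $S^{(j)}$ in the second display; this is where I must invoke the correct monotonicity property of relative free entropy (or the matched-block subadditivity), as a naive application of subadditivity would leave $S$ conditioning every term and produce a weaker constant. Everything else—the choice of a common free semicircular family and the summation of the individual estimates—is routine, which is presumably why this bound is available \emph{for free}.
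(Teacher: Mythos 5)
Your argument is correct and is exactly the one the paper has in mind: the lemma is stated there without proof, justified only by the remark that it follows ``for free from the subadditivity of relative free entropy,'' and your two-step derivation (subadditivity in the perturbed variables with common conditioning family $S$, then monotonicity under shrinking the conditioning family from $S$ to $S^{(j)}$) is the standard way to unpack that remark. The bookkeeping with $\sum_j n_j = n$ and $K = \sum_j K_j$ matches the definition of $(r_1+\dots+r_i)$-boundedness, so nothing further is needed.
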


Of course, if the sets $X^{(j)}$ are free, one expects this to be the optimal level of boundedness (provided each $r_j$ was optimal).
Here optimal is meant in the sense that
\begin{align*}
    \delta_0 \left( \bigcup_j X^{(j)} \right) = r_1 + \dots + r_i
\end{align*}
This is not known in general even for just the free entropy dimension.
However, the following does hold by \cite{MR2439665}.

\begin{proposition}
Let $\mathcal A_i$ for $1 \leq i \leq n$ be Connes embeddable diffuse finite von Neumann algebras.
Assume that $X^{(i)}$ generates $\mathcal A_i$ and $\delta_0(X^{(i)}) = 1$ for all $i$.
Then in $\ast_{i=1}^n \mathcal A_i$ we have $\delta_0(\bigcup X^{(i)}) = n$.
\end{proposition}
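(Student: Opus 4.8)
The plan is to prove the two matching inequalities $\delta_0\bigl(\bigcup_i X^{(i)}\bigr) \le n$ and $\delta_0\bigl(\bigcup_i X^{(i)}\bigr) \ge n$. Since $\delta_0(X^{(i)}) = 1$ for each $i$, both amount to the additivity $\delta_0\bigl(\bigcup_i X^{(i)}\bigr) = \sum_{i=1}^n \delta_0(X^{(i)})$ across the free product $\ast_{i=1}^n \cA_i$, with the upper bound soft and the lower bound carrying the content.

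First I would dispose of the upper bound, which is the ``estimate for free'' already indicated by the preceding lemma and requires neither freeness nor Connes embeddability. Any matricial microstate for the full tuple $\bigcup_i X^{(i)}$ restricts, coordinatewise, to a microstate for each individual tuple $X^{(i)}$, so the microstate space of the union embeds into the product of the microstate spaces of the factors. Hence covering numbers are submultiplicative and the relative free entropies are subadditive, which upon dividing by $\lvert\log\varepsilon\rvert$ and accounting for the number of variables yields $\delta_0\bigl(\bigcup_i X^{(i)}\bigr) \le \sum_{i=1}^n \delta_0(X^{(i)}) = n$.

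The lower bound is where the hypotheses are used and where I would invoke \cite{MR2439665}. The mechanism is to manufacture microstates for the free product out of microstates for the factors: Connes embeddability of each $\cA_i$ guarantees that matricial microstates $\xi^{(i)}$ for $X^{(i)}$ exist, and conjugating them by independent Haar-distributed unitaries $U_i$ produces a family $(U_i \xi^{(i)} U_i^*)_i$ which, by Voiculescu's asymptotic freeness of randomly rotated matrices, approximates $\ast_{i=1}^n \cA_i$. The crucial point is the accompanying packing-number estimate showing that this construction loses nothing asymptotically, so that the free entropy of the union is bounded below by the contributions of the factors together with the entropy of the Haar unitaries; the bookkeeping of these dimensions gives $\delta_0\bigl(\bigcup_i X^{(i)}\bigr) \ge \sum_{i=1}^n \delta_0(X^{(i)}) = n$. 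I expect this lower packing estimate to be the main obstacle: one must show the conjugated-and-combined microstate family fills out enough volume in the appropriate metric-entropy sense, and it is exactly here that diffuseness (so each factor contributes a full unit of dimension and the conjugation orbits are non-degenerate) and Connes embeddability (so the factor microstate spaces are non-empty with the correct asymptotic dimension) are indispensable. Combining the two inequalities yields $\delta_0\bigl(\bigcup_i X^{(i)}\bigr) = n$.
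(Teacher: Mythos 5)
Your argument is essentially the paper's: the proof given there is a bare citation of Proposition 2.4 and Corollary 4.8 of \cite{MR2439665}, which is exactly where you send the lower bound (the upper bound being the free subadditivity recorded in the preceding lemma). One minor caution: your gloss on the internal mechanism of the cited result --- random unitary rotations, asymptotic freeness, and a packing estimate --- understates what is required, since superadditivity of $\delta_0$ over free products is not known to follow from asymptotic freeness alone (the actual argument goes through amalgamation over diffuse hyperfinite subalgebras and Jung's results on hyperfinite algebras, which is where diffuseness and Connes embeddability really enter); but as you defer that step to the reference, just as the paper does, this does not affect correctness.
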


\begin{proof}
This follows from \cite{MR2439665}*{Proposition 2.4 and Corollary 4.8}.
\end{proof}

\begin{proposition} \label{P:nots1b}
For any $n \geq 2$ and $m \geq 3$, the II$_1$-factors $L^\infty(U_n^+)$ and $L^\infty(O_m^+)$ are not isomorphic.
\end{proposition}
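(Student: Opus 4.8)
The plan is to deduce the non-isomorphism from a single W$^*$-invariant that separates the two families, namely strong $1$-boundedness. By Example~\ref{ex:twist-by-J} (following \cite{MR3783410}), $L^\infty(O_m^+)$ is strongly $1$-bounded for every $m \geq 3$, and by Jung's theorem recalled above strong $1$-boundedness is equivalent, for finitely generated algebras, to property~(J); since $L^\infty(U_n^+)$ is generated by the finite set $\{v_{ij}\}$, this makes strong $1$-boundedness a $*$-isomorphism invariant here. It therefore suffices to prove that $L^\infty(U_n^+)$ is \emph{not} strongly $1$-bounded for all $n \geq 2$. By the contrapositive of Jung's theorem, for this it is in turn enough to exhibit one finite self-adjoint generating set $X$ of $L^\infty(U_n^+)$ that fails to be $1$-bounded; concretely, I would produce a generating set with $\delta_0(X) = 2 > 1$, which cannot be $1$-bounded because $1$-boundedness forces $\delta_0 \leq 1$.

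To produce such a set I would invoke Banica's free complexification description of the free unitary quantum group \cite{Ba97}: $U_n^+$ arises from $O_n^+$ by replacing the fundamental corepresentation $U = (u_{ij})$ with $zU$, where $z$ is a free Haar unitary. At the von Neumann-algebraic level this realizes $L^\infty(U_n^+)$ as the free complexification of $L^\infty(O_n^+)$, which I would identify with the free product
\begin{equation*}
L^\infty(U_n^+) \cong L^\infty(O_n^+) * L(\bZ),
\end{equation*}
the second factor being generated by $z$. Granting this identification, take $X^{(1)}$ to be the standard self-adjoint generators of $L^\infty(O_n^+)$ coming from the fundamental representation and $X^{(2)} = \{\Re z, \Im z\}$. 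Both algebras are diffuse and Connes-embeddable -- $L^\infty(O_n^+)$ because $\cO(O_n^+)$ is RFD \cite{chi-rfd}, and $L(\bZ)$ because it is amenable -- and both generating sets satisfy $\delta_0(X^{(i)}) = 1$: for $L^\infty(O_n^+)$ this is the vanishing of $\beta_0^{(2)}$ and $\beta_1^{(2)}$ through the estimate \eqref{BCV-estimate}, and for $L(\bZ)$ it is immediate. The preceding proposition (the free-product computation based on \cite{MR2439665}) then yields $\delta_0(X^{(1)} \cup X^{(2)}) = 2$, so $X^{(1)} \cup X^{(2)}$ is a generating set of $L^\infty(U_n^+)$ of free entropy dimension $2$, as required.

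Combining the two steps gives the claim: $L^\infty(U_n^+)$ is not strongly $1$-bounded while $L^\infty(O_m^+)$ is for $m \geq 3$, so the two cannot be isomorphic. For $n = 2$ the same conclusion follows directly from $L^\infty(U_2^+) \cong \mathcal{L}\mathbb{F}_2$ \cite{Ba97}, whose standard generators already have $\delta_0 = 2$. The main obstacle is the middle step: justifying that the free complexification of $L^\infty(O_n^+)$ is genuinely a free product with an extra copy of $L(\bZ)$ (equivalently, producing the $\delta_0 = 2$ generating set). The subalgebra of $L^\infty(O_n^+) * L(\bZ)$ generated by $\{z u_{ij}\}$ is a priori a proper subalgebra, so one must argue via free probability that it is nonetheless isomorphic to the full free product; this free-probabilistic identification, rather than the comparatively soft invariance and reduction arguments, is where the real work lies. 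Note that this does not clash with the fact that $\delta_0$ of the \emph{standard} generators $\{v_{ij}\}$ of $L^\infty(U_n^+)$ remains uncomputed, since $\delta_0$ is not known to be independent of the chosen generating set.
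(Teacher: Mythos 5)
Your overall strategy is the same as the paper's: separate the two families by strong $1$-boundedness, note that $L^\infty(O_m^+)$ is strongly $1$-bounded for $m\geq 3$, handle $n=2$ via $L^\infty(U_2^+)\cong\mathcal{L}\mathbb{F}_2$, and for $n\geq 3$ exhibit a free-product algebra with a generating set of free entropy dimension $2$ that is tied to $L^\infty(U_n^+)$. The reduction steps (property (J), the fact that a $\delta_0=2$ generating set rules out strong $1$-boundedness, the Brown--Dykema--Jung free product computation) are all correct.

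The gap is exactly where you flag it, and it is not a small one: the identification $L^\infty(U_n^+)\cong L^\infty(O_n^+)\ast L(\bZ)$ does not follow from Banica's free complexification theorem, and is not a known result. Banica's theorem realizes $\cO(U_n^+)$ as the subalgebra of $\cO(O_n^+)\ast \cO(\bT)$ generated by the entries of $zU$, and this subalgebra is genuinely proper: each generator $zu_{ij}$ and each adjoint $u_{ij}z^*$ has $z$-winding number congruent mod $2$ to its word-length parity in the $u_{ij}$, so the whole image lies in the fixed-point algebra of the corresponding $\bZ_2$-action on the free product; in particular neither $z$ nor $u_{ij}$ belongs to it. Consequently your proposed generating set $X^{(1)}\cup X^{(2)}$ generates the full free product, not (a copy of) $L^\infty(U_n^+)$, and the $\delta_0=2$ computation says nothing about $L^\infty(U_n^+)$ unless the missing isomorphism (or at least a finite-index inclusion, which would let you invoke Theorem \ref{S1B-SFs}) is supplied. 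Neither is provided, so the argument does not close.

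The paper circumvents this by replacing the free complexification with the graded twist construction of \cite{MR3580173}: $U_n^+$ is a graded $\bZ_2$-twist of $O_n^+\ast O_n^+$ (the $\bZ_2$-action swaps the two free copies), which realizes $L^\infty(U_n^+)$ as a \emph{finite index subfactor} of $L^\infty(O_n^+\ast O_n^+)\otimes M_2$. One then applies the free product computation to the union of the two standard generating sets of $L^\infty(O_n^+\ast O_n^+)$ to get $\delta_0=2$, and concludes via amplification stability (Proposition \ref{S1B-Amp}) and the finite-index permanence of strong $1$-boundedness (Theorem \ref{S1B-SFs}). If you want to keep the free complexification picture, the minimal repair would be to prove that $L^\infty(U_n^+)$ sits with finite index inside $L^\infty(O_n^+)\ast L(\bZ)$ and then invoke Theorem \ref{S1B-SFs}; but that finite-index claim also requires an argument you have not given.
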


\begin{proof}
Since $L^\infty( U_2^+ ) \cong \mathcal{L}\mathbb{F}_2$, which is not strongly $1$-bounded, we may assume that $n \geq 3$.

Consider $L^\infty(O_n^+ \ast O_n^+)$, and let $\mathcal{U}^{(k)} = \{u_{ij}^{(k)}\}_{i,j=1}^n$ with $k=1,2$ be the two free sets of matrix coefficients of the fundamental representations.
Then $\delta_0(\mathcal{U}^{(1)} \cup \mathcal{U}^{(2)}) = 2$.
We now use again the graded twist technique of \cite{MR3580173} (compare with Example \ref{ex:twist-by-J} above).
In their Examples 3.6 and 2.18 it is shown how to realise $U_n^+$ as a graded twist of $O_n^+ \ast O_n^+$ by $\mathbb{Z}_2$.
The $\mathbb{Z}_2$ action on $O_n^+ \ast O_n^+$ is the one that swaps the two free copies of the generators of $O_n^+$.
Taking once again the crossed product by the dual action, it follows that $L^\infty(U_n^+)$ appears as a finite index subfactor of $L^\infty(O_n^+\ast O_n^+) \otimes M_2$, and hence cannot be strongly $1$-bounded.
\end{proof}

\section{Unitary error bases and crossed product isomorphisms}
\label{sec:unitary-error-basis}

In this final section we outline how the existence of ``small'' finite-dimensional representations of the linking algebra $\cO(\Aut^+(B), S_N^+)$ as in Proposition \ref{prop:fin-dim-rep-link-Autplus-SNplus} is intimately related to the construction of unitary error bases in quantum information theory.
Note that a finite dimensional representation of the linking algebra can be interpreted as the \emph{isomorphism game} between $B$ and $\bC^N$ having a quantum winning strategy.
In fact, it was the study of quantum graph colorings in \cite{BGH20} and their implementation via generalized unitary error bases that initially inspired the authors to the concrete crossed product isomorphisms for quantum automorphism groups that are derived in this section.

\begin{definition} \label{def:UEB}
Let $n \in \bN$.
A \emph{unitary error basis} is a basis $\{u_a\}_{a=1}^{n^2}$ of $M_n$ consisting of unitary matrices that are orthogonal with respect to the normalized trace inner product:
\[
\tr(u_a^*u_b) = \delta_{a,b} \quad (1 \le a,b \le n^2).
\]
\end{definition}

We note that an equivalent characterization of a unitary error basis in $M_n$ is a family of unitaries $\{u_a\}_{a = 1}^{n^2}$ with the following \emph{depolarization property}:
\[
\sum_{a=1}^{n^2} u_a^*x u_a = n\Tr(x)1.
\]

Let $(\ket{i})_{i = 0}^n$ be a standard basis of $\bC^n$, and put
\begin{equation*}
\ket{\phi}=\frac{1}{\sqrt{n}}\sum_{i=0}^{n-1} \ket{i i} = \frac{1}{\sqrt{n}}\sum_{i=0}^{n-1} \ket{i} \otimes \ket{i} \in \bC^n \otimes \bC^n.
\end{equation*}
Let us further fix a primitive $n$-th root of unity $\omega$.
Then the \emph{generalized Pauli matrices} $X_n,Z_n \in M_n$ are defined to be
\begin{align}\label{eq:gen-pauli-matrices}
X_n\ket{j} &= \omega^j \ket{j},&
Z_n\ket{j} &= \ket{j+1},
\end{align}
where the index is computed modulo $n$.
We then put $T_{i,j}=X_n^i Z_n^j$ for $0 \leq i,j \leq n-1$, and
\[
\ket{\phi_{i,j}}=(T_{i,j} \otimes I_n)\ket{\phi}.
\]
The Pauli matrices satisfy the commutation relation $X_n Z_n=\omega Z_n X_n$, and we have $\tr(T_{i,j})= \delta_{i, 0} \delta_{j, 0}$.
Thus, $\{T_{i,j} \mid 0 \leq i,j \leq n-1\}$ is a unitary basis for $(M_n,\tr)$, called the \emph{Weyl unitary error basis}.
On the other hand, $\{\ket{\phi_{i,j}} \mid 0 \leq i,j \leq n-1\}$ is an orthonormal basis for $\bC^n \otimes \bC^n$, called a \emph{maximally entangled basis}.

\subsection{Finite dimensional representations of the linking algebra}

Let us begin with a concrete presentation of $\cO(\Aut^+(B))$.
Throughout the section we work with a multimatrix decomposition 
\[
B = \bigoplus_{r=1}^m M_{n_r},
\]
and denote the canonical matrix units for $M_{n_s}$ inside of $B$ by $E_{ij}^{(s)}$, $0 \leq i,j \leq n_s-1$.
Then $\cO(\Aut^+(B))$ is the universal unital $\ast$-algebra generated by elements $q_{(i,j),(k,\ell)}^{(s,r)}$, $1 \leq s,r \leq m$, $0 \leq i,j \leq n_s-1$, $0 \leq k,\ell \leq n_r-1$, satisfying
\begin{enumerate}
\item
$\sum_{v=0}^{n_r-1} q_{(i,j),(k,v)}^{(s,r)}q_{(i',j'),(v,\ell)}^{(s',r)}=\delta_{ji'} \delta_{ss'} q_{(i,j'),(k,\ell)}^{(s,r)}$;
\item
$\sum_{v=0}^{n_s-1} n_s^{-1}q_{(i,v),(k,\ell)}^{(s,r)}q_{(v,j),(k',\ell')}^{(s,r')}=\delta_{\ell k'} \delta_{rr'} n_r^{-1}q_{(i,j),(k,\ell')}^{(s,r)}$;
\item
$q_{(i,j),(k,\ell)}^{(s,r)*}=q_{(j,i),(\ell,k)}^{(s,r)}$;
\item
$\sum_{s=1}^m \sum_{i=0}^{n_s-1} q_{(i,i),(k,\ell)}^{(s,r)}=\delta_{k\ell}$;
\item
$\sum_{r=1}^m \sum_{k=0}^{n_r-1} n_r q_{(i,j),(k,k)}^{(s,r)}=n_s\delta_{ij}$.
\end{enumerate}
The coproduct on $\cO(\Aut^+(B))$ is given by 
\begin{equation}
\label{eq:coprod-on-AutplusB}
\Delta(q_{(i,j),(i'j')}^{(r,s)})  = \sum_{z=1}^m \sum_{k,\ell = 0}^{n_z-1} q_{(i,j),(k,\ell)}^{(r,z)} \otimes q_{(k,\ell), (i',j')}^{(z,s)}.
\end{equation}

Then the coaction of $\cO(\Aut^+(B))$ on $B$ is given by
\[
\rho(E_{ij}^{(s)})=\sum_{r=1}^m \sum_{k,\ell=0}^{n_r-1} q_{(i,j),(k,\ell)}^{(s,r)} \otimes E_{k\ell}^{(r)}.
\]

We now proceed to present a finite-dimensional representation of the linking algebra $\cO(\Aut^+(B), S_{\dim B}^+)$.  Put $N = \dim B$, $d = n_1n_2\ldots n_m$, and identify $M_d \cong M_{n_1} \otimes \ldots M_{n_m}$ in the usual way.
For $T \in M_{n_r}$, we put
\[
T^{(r)}=I_{n_1} \otimes I_{n_2} \otimes \cdots \otimes I_{n_{r-1}} \otimes T \otimes I_{n_{r+1}} \otimes \cdots \otimes I_{n_m} \in M_d.
\]

For each $1 \le r \le m$, let us write the Weyl error basis of $M_{n_r}$ as $U_{r,i,j} = T_{i,j}$.
We then take, for $1 \le s \le m$, $0 \le a,b < n_s$, the elements 
\[
P_{s,a,b} = \sum_{i,j = 0}^{n_r-1} E_{i,j} \otimes P^{(s)}_{(a,b),(i,j)} \in M_{n_s} \otimes M_d \subset B \otimes M_d, \quad P_{(a,b),(i,j)} = \frac1{n_s} U^{*}_{s,a,b} E_{i,j} U_{s,a,b}.
\]

Now we can present a concrete representation of the linking algebra as in Proposition \ref{prop:fin-dim-rep-link-Autplus-SNplus}, but with a smaller dimension than the one given in its proof.
Write the standard basis of minimal projections in the Abelian C$^\ast$-algebra $\bC^N$ as $\{e_{s,a,b}\}$ for $1 \le s \le m$ and $0 \le a,b < n_s$.

\begin{proposition} \label{P:UEB rep}
Under the above setting, the map
\[
\hat\rho\colon \bC^N \to  B \otimes M_d, \quad e_{s,a,b} \mapsto P_{s,a,b}
\]
is a unital $\ast$-homomorphism that is Plancherel trace covariant.
In particular, the linking algebra $\cO(\Aut^+(B), S_N^+)$ admits a non-zero $\ast$-homomorphism $\pi$ to $M_d$ characterized by $\hat \rho = (\id  \otimes \pi)\rho$.
\end{proposition}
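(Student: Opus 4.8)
The plan is to reduce everything to the universal property of the linking algebra recorded just after Theorem~\ref{thm:derijdt-vandervennet}: for $B_1 = B$, $B_2 = \bC^N$, a non-zero $\ast$-homomorphism $\pi\colon \cO(\Aut^+(B),S_N^+)\to M_d$ with $\hat\rho=(\id\otimes\pi)\rho$ exists precisely when $\hat\rho$ is a unital $\ast$-homomorphism $\bC^N\to B\otimes M_d$ satisfying the $\psi_{\bC^N}$-$\psi_B$-invariance condition $(\psi_B\otimes\id)\hat\rho(x)=\psi_{\bC^N}(x)1$, where $\psi_B$ and $\psi_{\bC^N}$ are the respective Plancherel traces. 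So I would verify exactly these two properties. Since $\bC^N$ is generated by the minimal projections $e_{s,a,b}$, which form a complete orthogonal system, a unital $\ast$-homomorphism out of $\bC^N$ is the same datum as a complete family of mutually orthogonal projections $\{P_{s,a,b}\}$ in $B\otimes M_d$, and this is what I must exhibit.

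First I would isolate a single block. Fixing $s$, the element $P_{s,a,b}=\sum_{i,j}E^{(s)}_{ij}\otimes \tfrac1{n_s}U^*_{s,a,b}E_{ij}U_{s,a,b}$ is supported on $(\text{$s$-th block of }B)\otimes(\text{$s$-th slot of }M_d)\cong M_{n_s}\otimes M_{n_s}$, with the remaining slots of $M_d$ carrying the identity; write $\Omega_{s,a,b}\in M_{n_s}\otimes M_{n_s}$ for the corresponding element. The key observation is that $\tfrac1{n_s}\sum_{i,j}E_{ij}\otimes E_{ij}=\ket{\phi}\bra{\phi}$ is the rank-one projection onto the maximally entangled vector, so $\Omega_{s,a,b}=(I\otimes U^*_{s,a,b})\ket{\phi}\bra{\phi}(I\otimes U_{s,a,b})=\ket{\psi_{s,a,b}}\bra{\psi_{s,a,b}}$ with $\ket{\psi_{s,a,b}}=(I\otimes U^*_{s,a,b})\ket{\phi}$. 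Because the Weyl unitaries $\{U_{s,a,b}\}$ form a unitary error basis of $M_{n_s}$, the vectors $\{\ket{\psi_{s,a,b}}\}$ form a maximally entangled orthonormal basis of $\bC^{n_s}\otimes\bC^{n_s}$; equivalently, the depolarization property gives $\sum_{a,b}U^*_{s,a,b}E_{ij}U_{s,a,b}=n_s\delta_{ij}I$. Hence the $\Omega_{s,a,b}$ are mutually orthogonal rank-one projections with $\sum_{a,b}\Omega_{s,a,b}=I$.

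Transporting back along the $\ast$-homomorphism $\iota_s\colon M_{n_s}\otimes M_{n_s}\to B\otimes M_d$ (block embedding on the first leg, slot embedding on the second), each $P_{s,a,b}=\iota_s(\Omega_{s,a,b})$ is a projection, the $P_{s,a,b}$ with the same $s$ are mutually orthogonal, and $\sum_{a,b}P_{s,a,b}=p_s\otimes I_{M_d}$, where $p_s$ is the central projection onto the $s$-th block of $B$. Orthogonality across distinct blocks is automatic since $p_sp_{s'}=0$ forces $P_{s,a,b}P_{s',a',b'}=0$ for $s\neq s'$. Summing over all indices yields $\sum_{s,a,b}P_{s,a,b}=\bigl(\sum_s p_s\bigr)\otimes I_{M_d}=1_{B\otimes M_d}$, so $\hat\rho$ is a unital $\ast$-homomorphism.

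For covariance I would compute $(\psi_B\otimes\id)(P_{s,a,b})$ directly: applying $\psi_B(E^{(s)}_{ij})=\tfrac{n_s}{N}\delta_{ij}$ annihilates the off-diagonal terms and leaves $\tfrac{n_s}{N}\sum_i \tfrac1{n_s}U^*_{s,a,b}E_{ii}U_{s,a,b}=\tfrac1N U^*_{s,a,b}I U_{s,a,b}=\tfrac1N I_{M_d}$. Since $\psi_{\bC^N}(e_{s,a,b})=\tfrac1N$, this is exactly $\psi_{\bC^N}(e_{s,a,b})1$, establishing invariance. The universal property then produces $\pi$, which is non-zero because $\hat\rho$ is unital. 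The main obstacle is the recognition in the second paragraph that each $P_{s,a,b}$ is a maximally entangled projection and that the error-basis condition forces $\{\ket{\psi_{s,a,b}}\}$ to be a complete orthonormal family; the remaining delicacy is bookkeeping the two distinct embeddings of $M_{n_s}$ (as a block of $B$ versus a tensor slot of $M_d$), so that idempotency, orthogonality, and the sum-to-one relation are all read off in the correct algebra.
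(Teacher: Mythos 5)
Your proposal is correct and follows essentially the same route as the paper: verify that the $P_{s,a,b}$ are mutually orthogonal projections summing to $1$ (blockwise via the unitary error basis, across blocks via orthogonality of the central projections of $B$), check the Plancherel trace condition by the same direct computation, and invoke the universal property of the linking algebra. The only cosmetic difference is that you make the ``standard calculation'' explicit by recognizing each $P_{s,a,b}$ as the image of the rank-one projection onto $(I\otimes U_{s,a,b}^*)\ket{\phi}$, whereas the paper defers the single-block case to a cited result; both arguments rest on the same depolarization/orthogonality properties of the Weyl basis.
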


\begin{proof}
When $m = 1$, this result is exactly \cite{BEVW20}*{Proposition 7.2}.
The general case only requires some small modifications of the proof there.

Let us first fix $1 \le s \le m$.
First, the polarization property implies
\[
\sum_{a,b} P_{s,a,b}  = I_{n_s} \otimes I_d.
\]
Next, each $P_{s,a,b}$ is a projection by a standard calculation.
Moreover, the orthogonality of $(U_{s,i,j})_{i,j}$ implies that the $P_{s,a,b}$ and $P_{s,a',b'}$ are mutually orthogonal.
Since we also have $P_{s,a,b}P_{s',a',b'} = 0$ for all $s \ne s'$, it follows that
\[
\hat \rho\colon \bC^N \to  B \otimes M_d, \quad \hat \rho(e_{s,a,b}) = P_{s,a,b}
\]
defines a $\ast$-homomorphism.  

We also have
\[
(n_s\Tr_{n_s} \otimes \id)(P_{s,a,b}) = I_d,
\]
hence $(\psi \otimes \id)(P_{s,a,b}) = \frac1N = \psi(e_{s,a,b})$.
This shows the compatibility with Plancherel traces.  The existence of $\pi$ then follows from the universal properties of the algebras under consideration. 
\end{proof}

Now, let $u_{(s,x,y),(r,v,w)}$ denote the matrix coefficients of $\cO(S_N^+)$ corresponding to the basis $(e_{s,x,y})_{s,x,y}$ of $\bC^N$. For each $1 \leq s \leq m$, we fix a primitive $n_s$-th root of unity $\omega_{n_s}$.

\begin{theorem}
\label{theorem: finite index embeddings}
In the above setting, there are unital $\ast$-homomorphisms
\begin{align*}
\pi\colon \cO(\Aut^+(B)) &\to M_d \otimes M_d \otimes \cO(S_N^+),&
\rho\colon \cO(S_N^+) &\to M_d \otimes M_d \otimes \cO(\Aut^+(B)),
\end{align*}
for $d=n_1 \cdots n_m$, that are compatible with traces induced by the Haar traces (that is $(\id \otimes \otimes \id \otimes h_{S_N^+})\pi = h_{\Aut^+(B)}(\cdot ) (1 \otimes 1)$ and $(\id \otimes \otimes \id \otimes h_{\Aut^+(B)})\rho = h_{S_N^+}(\cdot ) (1 \otimes 1)$).
These morphisms are characterized by
\begin{align*}
\pi(q_{(i,j),(k,\ell)}^{(s,r)})&=\frac{1}{n_s} \sum_{x,y=0}^{n_s-1} \sum_{v,w=0}^{n_r-1} \omega_{n_s}^{-x(i-j)}\omega_{n_r}^{-v(k-\ell)} E_{i-y,j-y}^{(s)} \otimes E_{k-w,\ell-w}^{(r)} \otimes u_{(s,x,y),(r,v,w)}, \\
\rho(u_{(s,x,y),(r,v,w)})&=\frac{1}{n_r}\sum_{i,j=0}^{n_s-1} \sum_{k,\ell=0}^{n_r-1} \omega_{n_s}^{x(i-j)}\omega_{n_r}^{v(k-\ell)} E_{i-y,j-y}^{(s)} \otimes E_{k-w,\ell-w}^{(r)} \otimes q_{(i,j),(k,\ell)}^{(s,r)}.
\end{align*}
\end{theorem}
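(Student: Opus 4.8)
The plan is to realize $\pi$ and $\rho$ as concrete instances of the transfer maps already appearing in the proofs of Theorem~\ref{T: transferring matrix models RFD} and Theorem~\ref{T: transferring matrix models CEP}, now specialized to the explicit finite-dimensional representation $\pi_0\colon \cO(\Aut^+(B),S_N^+)\to M_d$ supplied by Proposition~\ref{P:UEB rep}. First I would assemble the ingredients: the representation $\pi_0$ itself; the representation $\pi_0^{\opo}\colon \cO(S_N^+,\Aut^+(B))\to M_d$ of the opposite linking algebra induced through the canonical identification $\cO(S_N^+,\Aut^+(B))\cong\cO(\Aut^+(B),S_N^+)^{\opo}$; the right coaction $\delta_2\colon \cO(\Aut^+(B),S_N^+)\to\cO(\Aut^+(B),S_N^+)\otimes\cO(S_N^+)$; and the unital $\ast$-homomorphism $\theta_1\colon\cO(\Aut^+(B))\to\cO(\Aut^+(B),S_N^+)\otimes\cO(S_N^+,\Aut^+(B))$ from the preliminaries.

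I would then define $\pi := \tau_{23}\circ\big(((\pi_0\otimes\id)\delta_2)\otimes\pi_0^{\opo}\big)\circ\theta_1$, where $\tau_{23}$ flips the last two tensor legs, landing in $M_d\otimes M_d\otimes\cO(S_N^+)$. As a composite of unital $\ast$-homomorphisms it is again one, so no relations need to be checked by hand for well-definedness. The map $\rho$ is obtained by the symmetric construction with the roles of $\Aut^+(B)$ and $S_N^+$ interchanged (using $\pi_0^{\opo}$ as the representation of $\cO(S_N^+,\Aut^+(B))$ and the $S_N^+$-version of $\theta$). Trace compatibility is then essentially automatic: using $(\id\otimes h_{S_N^+})\delta_2 = \omega_{12}(\cdot)1$, the unitality of $\pi_0$, and $(\omega_{12}\otimes\id)\theta_1 = h_{\Aut^+(B)}(\cdot)1$, one computes
\[
(\id\otimes\id\otimes h_{S_N^+})\pi(x) = 1\otimes\pi_0^{\opo}\big((\omega_{12}\otimes\id)\theta_1(x)\big) = h_{\Aut^+(B)}(x)\,(1\otimes 1),
\]
and symmetrically for $\rho$.

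It then remains to identify this abstract composite with the stated closed-form expressions. Here I would substitute the explicit values of $\pi_0$ from Proposition~\ref{P:UEB rep}, namely $P^{(s)}_{(a,b),(i,j)} = \tfrac1{n_s}U_{s,a,b}^{*}E_{ij}U_{s,a,b}$, and compute the conjugation by the Weyl unitary error basis directly: since $U_{s,a,b}=X_{n_s}^{a}Z_{n_s}^{b}$, a one-line calculation gives $U_{s,a,b}^{*}E_{ij}U_{s,a,b}=\omega_{n_s}^{-a(i-j)}E_{i-b,\,j-b}$, which is exactly the source of the roots of unity $\omega_{n_s}^{-x(i-j)}$ and the shifted matrix units $E^{(s)}_{i-y,\,j-y}$ in the formula (with $(a,b)=(x,y)$). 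Tracking the middle $\bC^N$-index produced by $\theta_1$ against the new index produced by $\delta_2$ reproduces the $\cO(S_N^+)$-coefficient $u_{(s,x,y),(r,v,w)}$ in the third leg, while applying $\pi_0^{\opo}$ to the second linking-algebra leg contributes the factor supported on $E^{(r)}_{k-w,\,\ell-w}$.

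I expect this last identification to be the main obstacle, not because any single step is deep but because it is a delicate bookkeeping problem with three points requiring care. The first is that $\pi_0^{\opo}$, being a genuine $\ast$-homomorphism out of the \emph{opposite} linking algebra, incorporates a transpose (equivalently a contragredient), which is what transposes the matrix-unit indices and conjugates the phases between the two $M_d$ legs. The second is the asymmetric normalization: the factors $n_s^{-1}$ and $n_r^{-1}$ built into relations~(1)--(5) of $\cO(\Aut^+(B))$, together with those coming from the two copies of $P^{(\cdot)}$, must be reconciled with the single prefactor $\tfrac1{n_s}$ (respectively $\tfrac1{n_r}$) appearing in the stated formula for $\pi$ (respectively $\rho$). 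The third is making the summation ranges over the Weyl parameters $(x,y)$ and $(v,w)$ align precisely with the coproduct sum in \eqref{eq:coprod-on-AutplusB}. Once these conventions are fixed consistently the two formulas drop out, and because $\pi$ and $\rho$ are built as composites of $\ast$-homomorphisms one never has to verify the defining relations~(1)--(5) or the magic-unitary relations of $\cO(S_N^+)$ directly.
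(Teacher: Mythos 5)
Your proposal is correct and follows essentially the same route as the paper's (much terser) proof: the paper likewise obtains $\pi$ and $\rho$ as the canonical trace-preserving composites from the proofs of Theorems \ref{T: transferring matrix models RFD} and \ref{T: transferring matrix models CEP} applied to the Weyl-unitary-error-basis representation of Proposition \ref{P:UEB rep}, and then reads off the closed-form expressions. Your conjugation identity $U_{s,a,b}^*E_{ij}U_{s,a,b}=\omega_{n_s}^{-a(i-j)}E_{i-b,j-b}$ and the bookkeeping points you flag are exactly the content the paper leaves implicit.
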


\begin{proof}
As remarked in the proof of Theorem \ref{T: transferring matrix models CEP}, the existence of a non-zero representation $\cO(\Aut^+(B), S_N^+) \to M_d$ canonically gives rise to trace-preserving embeddings $\pi$, $\rho$ with the correct domains and ranges.  The specific form of $\pi,\rho$ described in the statement of the present theorem follows if we choose to use the Weyl unitary error bases in the representation $\cO(\Aut^+(B), S_N^+) \to M_d$ supplied by Proposition \ref{P:UEB rep}. 
\end{proof}

Again, it is convenient to write the homomorphisms $\pi$ and $\rho$ in a different form.

Let $X_{n_r}, Z_{n_r} \in M_{n_r}$ be generalized Pauli matrices as in \eqref{eq:gen-pauli-matrices}.
In the tensor product
\[
M_{n_1} \otimes M_{n_1} \otimes M_{n_2} \otimes M_{n_2} \otimes \cdots \otimes M_{n_m} \otimes M_{n_m},
\]
we write $\varphi_{ij}^{[s]}$ for the projection
\[
I_{n_1} \otimes I_{n_1} \otimes \cdots \otimes I_{n_{s-1}} \otimes I_{n_{s-1}} \otimes \ket{\phi_{i,j}}\bra{\phi_{i,j}} \otimes I_{n_{s+1}} \otimes I_{n_{s+1}} \otimes \cdots \otimes I_{n_m} \otimes I_{n_m}.
\]
Then the projections $\{ \varphi_{ij}^{[s]} \mid 0 \leq i,j \leq n_s-1, \, 1 \leq s \leq m\}$ form a projection valued measure with $N$ outcomes in this rearranged tensor product $M_d \otimes M_d$. 
Similarly, we write $T_{i,j}^{[s]}$ for the operator
\[
I_{n_1} \otimes I_{n_1} \otimes \cdots \otimes I_{n_{s-1}} \otimes I_{n_{s-1}} \otimes (X_{n_s}^iZ_{n_s}^j \otimes I_{n_s}) \otimes I_{n_{s+1}} \otimes I_{n_{s+1}} \otimes \cdots \otimes I_{n_m} \otimes I_{n_m}.
\]
We set $X^{[s]}=T_{1,0}^{[s]}$ and $Z^{[s]}=T_{0,1}^{[s]}$.

For each $1 \leq r,s \leq m$, we set $\displaystyle Q^{(s,r)}=\sum_{i,j=0}^{n_s-1} \sum_{k,\ell=0}^{n_r-1} E_{ij}^{(s)} \otimes E_{k\ell}^{(r)} \otimes q_{(i,j),(k,\ell)}^{(s,r)}$.
Then we compute
\begin{multline*}
(\id_{d} \otimes \id_{d}\otimes \pi)(Q^{(s,r)})\\
=\frac{1}{n_s}\sum_{i,j,k,\ell} \sum_{x,y,v,w} \omega_{n_s}^{-x(i-j)}E_{ij}^{(s)} \otimes \omega_{n_r}^{-v(k-\ell)}E_{k\ell}^{(r)} \otimes E_{i-y,j-y}^{(s)} \otimes E_{k-w,\ell-w}^{(r)} \otimes u_{(s,x,y),(r,v,w)}.
\end{multline*}
After a shuffle, we see that the image of $(\id_{d} \otimes \id_{d} \otimes \pi)(Q^{(s,r)})$ in $M_d^{\otimes 4} \otimes \cO(S_{\dim(B)}^+)$ is given by
\[
n_r\sum_{x,y=0}^{n_s-1} \sum_{v,w=0}^{n_r-1} \varphi_{-x,y}^{[s]} \otimes \varphi_{-v,w}^{[r]} \otimes u_{(s,x,y),(r,v,w)}.
\] 

Similarly, we have
\begin{align*}
\rho(u_{(s,x,y),(r,v,w)})&=\frac{1}{n_r} \sum_{i,j=0}^{n_r-1} \sum_{k,\ell=0}^{n_s-1} \omega_{n_s}^{x(i-j)} E_{i-y,j-y}^{(s)} \otimes  \omega_{n_r}^{v(k-\ell)} E_{k-w,\ell-w}^{(r)} \otimes q_{(i,j),(k,\ell)}^{(s,r)} \\
&=(T_{x,-y}^{[s]} \otimes T_{v,-w}^{[r]} \otimes 1)\left( \frac{1}{n_r} Q^{(s,r)} \right)(T_{x,-y}^{[s]} \otimes T_{v,-w}^{[r]} \otimes 1)^*.
\end{align*}

\subsection{Iterated Crossed Product Isomorphisms}

Next, let us establish a parallel of Corollary \ref{cor:dbl-cross-prod-is-mat-amp}, but with smaller group actions (using $\Gamma$ instead of $\Gamma^2$).
For $1 \leq t \leq m$, we define algebra automorphisms $\alpha_{1,t},\alpha_{2,t},\alpha_{3,t},\alpha_{4,t}$ on $\cO(\Aut^+(B))$ by
\begin{align*}
\alpha_{1,t}(q_{(i,j),(k,\ell)}^{(s,r)})&=
\begin{cases}
q_{(i,j),(k,\ell)}^{(s,r)} & (s \neq t) \\
\omega_{n_t}^{i-j} q_{(i,j),(k,\ell)}^{(s,r)} & (s=t), \end{cases}&
\alpha_{3,t}(q_{(i,j),(k,\ell)}^{(s,r)})&=
\begin{cases} q_{(i,j),(k,\ell)}^{(s,r)} & (r \neq t) \\
\omega_{n_r}^{k-\ell} q_{(i,j),(k,\ell)}^{(s,r)} & (r=t), \end{cases} \\
\alpha_{2,t}(q_{(i,j),(k,\ell)}^{(s,r)})&=
\begin{cases} q_{(i,j),(k,\ell)}^{(s,r)} & (s \neq t) \\
q_{(i+1,j+1),(k,\ell)}^{(s,r)}& (s=t), \end{cases}&
\alpha_{4,t}(q_{(i,j),(k,\ell)}^{(s,r)})&=
\begin{cases} q_{(i,j),(k,\ell)}^{(s,r)} & (r \neq t) \\
q_{(i,j),(k+1,\ell+1)}^{(s,r)} & (r=t). \end{cases}
\end{align*}
In $\alpha_2$ and $\alpha_4$, the shifts (if applicable) are done with indices modulo $n_t$.

From the above presentation, we see that $\alpha_{i, t}$ is an automorphisms of order $n_t$ for all $i$.
Moreover, $\alpha_{1,t}$ and $\alpha_{3,t'}$ commute for all $t,t'$ and $\alpha_{2,t}$ and $\alpha_{4,t'}$ commute for all $t,t'$. Thus, we have obtained group actions of the group $\Gamma = \prod_{t=1}^m (\bZ_{n_t} \times \bZ_{n_t})$ on $\cO(\Aut^+(B))$.

\begin{proposition}\label{prop:alpha-pres-haar-tr}
The actions $\alpha_{i, t}$ preserve the Haar trace of $\cO(\Aut^+(B))$.
\end{proposition}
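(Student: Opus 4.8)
The plan is to realize each $\alpha_{i,t}$ as a left or right translation by a classical point of $\Aut^+(B)$, and then invoke the standard fact that such translations preserve the Haar state.

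First I would recall that the $\ast$-characters $\chi\colon\cO(\Aut^+(B))\to\bC$ are in bijection with the $\psi$-preserving $\ast$-automorphisms of $B$: to $\chi$ one associates $g_\chi = (\chi\otimes\id)\rho \in \Aut(B,\psi)$, and conversely each $g\in\Aut(B,\psi)$, viewed as a $\psi$-invariant coaction of the trivial Hopf $\ast$-algebra $\bC$, yields via the universal property of $\cO(\Aut^+(B))$ a character $\chi_g$ with $(\chi_g\otimes\id)\rho = g$. Under this correspondence $\chi_g(q^{(s,r)}_{(i,j),(k,\ell)})$ is precisely the $((i,j),(k,\ell))$ matrix coefficient of $g$ in the block decomposition $B=\bigoplus_r M_{n_r}$.

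Next I would identify the relevant classical points. The inner automorphisms $\Ad(X_{n_t})$ and $\Ad(Z_{n_t})$ of the block $M_{n_t}$, extended by the identity on the remaining blocks, are trace-preserving $\ast$-automorphisms of $B$ and hence lie in $\Aut(B,\psi)$; using $X_{n_t}E_{ij}X_{n_t}^* = \omega_{n_t}^{i-j}E_{ij}$ and $Z_{n_t}E_{ij}Z_{n_t}^* = E_{i+1,j+1}$ one reads off their characters on the generators $q^{(s,r)}_{(i,j),(k,\ell)}$. A direct computation with the coproduct \eqref{eq:coprod-on-AutplusB} then shows that left translation $L_g = (\chi_g\otimes\id)\Delta$ by these two points reproduces exactly $\alpha_{1,t}$ and $\alpha_{2,t}$ (which act on the source indices $(s,i,j)$), while right translation $R_g = (\id\otimes\chi_g)\Delta$ reproduces $\alpha_{3,t}$ and $\alpha_{4,t}$ (acting on the target indices $(r,k,\ell)$). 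The collapse of the sum in $\Delta$ against the $\delta$-supported character is the only bookkeeping involved, and it works because in $\Delta(q^{(s,r)})=\sum_{z,a,b} q^{(s,z)}_{(i,j),(a,b)}\otimes q^{(z,r)}_{(a,b),(k,\ell)}$ the first leg carries the source data and the second the target data.

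Finally, I would conclude using invariance of the Haar state under translations: for any character $\chi$ and any $x$,
\[
h(L_\chi x) = (\chi\otimes h)\Delta(x) = \chi\bigl((\id\otimes h)\Delta(x)\bigr) = \chi(h(x)1) = h(x),
\]
where the third equality is left invariance of $h$; symmetrically $h\circ R_\chi = h$ using right invariance. Applying this to the characters above gives $h\circ\alpha_{i,t} = h$ for all $i,t$. I expect the only substantive step to be the identification in the previous paragraph—recognizing the abstractly defined $\alpha_{i,t}$ as translations by the Weyl inner automorphisms—after which Haar invariance is automatic; there is no genuine analytic obstacle, only the verification that the coproduct formula collapses correctly.
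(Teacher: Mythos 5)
Your argument is correct. The paper's own proof is more minimalist: it simply observes from the coproduct formula \eqref{eq:coprod-on-AutplusB} that $\alpha_{1,t}$ (resp.\ $\alpha_{2,t}$) satisfies the comodule-equivariance identity $(\alpha_{1,t}\otimes\id)\circ\Delta=\Delta\circ\alpha_{1,t}$, and similarly with the legs exchanged for $\alpha_{3,t},\alpha_{4,t}$, and then applies $(\id\otimes h)$ resp.\ $(h\otimes\id)$ to conclude by invariance of $h$. You instead identify each $\alpha_{i,t}$ as a translation $(\chi_g\otimes\id)\Delta$ or $(\id\otimes\chi_g)\Delta$ by the character $\chi_g$ attached to the classical point $g=\Ad(X_{n_t})$ or $\Ad(Z_{n_t})$ of $\Aut(B,\psi)$, which is a strictly stronger statement (any such translation is automatically a comodule endomorphism by coassociativity, but not conversely), and then conclude by the same Haar invariance. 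The verification in both cases is the same collapse of the coproduct sum on generators, and your index bookkeeping (source legs for $\alpha_{1,t},\alpha_{2,t}$ via left translation, target legs for $\alpha_{3,t},\alpha_{4,t}$ via right translation, with $X_{n_t}E_{ij}X_{n_t}^*=\omega_{n_t}^{i-j}E_{ij}$ and $Z_{n_t}E_{ij}Z_{n_t}^*=E_{i+1,j+1}$) checks out. What your route buys is a conceptual explanation of where the $\alpha_{i,t}$ come from --- they are translations by the Weyl inner automorphisms, which also makes the order and commutation relations among them transparent --- at the cost of invoking the correspondence between $\ast$-characters of $\cO(\Aut^+(B))$ and points of $\Aut(B,\psi)$ (only the easy direction, via the universal property, is actually needed). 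The paper's version is marginally more direct since it skips that identification.
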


\begin{proof}
By \eqref{eq:coprod-on-AutplusB}, the automorphism $\alpha_{1,t}$ is a right comodule endomorphism in the sense that
\[
(\alpha_{1,t} \otimes \id) \circ \Delta = \Delta \circ \alpha_{1,t}.
\]
Combined with the right invariance condition $(\id \otimes h) \circ \Delta(x) = h(x) 1$, we obtain $h \circ \alpha_{1, t} = h$.

The other cases are proved in the same way (with left equivariance and invariance for $\alpha_{3, t}$ and $\alpha_{4,t}$).
\end{proof}

Next, we construct the crossed product $\cO(\Aut^+(B)) \rtimes_{\alpha_1,\alpha_3} \Gamma$, which is the universal $\ast$-algebra generated by elements $q_{(i,j),(k,\ell)}^{(s,r)}$ and $z_{1,t},z_{3,t'}$, such that:
\begin{itemize}
\item
The elements $q_{(i,j),(k,\ell)}^{(s,r)}$ generate a copy of $\cO(\Aut^+(B))$;
\item
$z_{1,t}^d=z_{3,t}^d=1$ and $z_{1,t}^*=z_{1,t}^{d-1}$ and $z_{3,t}^*=z_{3,t}^{d-1}$; and $[z_{1,t},z_{3,t'}]=0$; and
\item
$z_{1,t}q_{(i,j),(k,\ell)}^{(s,r)}z_{1,t}^*=\alpha_{1,t}(q_{(i,j),(k,\ell)}^{(s,r)})$ and $z_{3,t}q_{(i,j),(k,\ell)}^{(s,r)}z_{3,t}^*=\alpha_{3,t}(q_{(i,j),(k,\ell)}^{(s,r)})$.
\end{itemize}
We consider the trace on this algebra induced by $h$.

The automorphisms $\alpha_{2,\tau},\alpha_{4,\tau'}$ on $\cO(\Aut^+(B))$ extend to trace-preserving automorphisms on $\cO(\Aut^+(B)) \rtimes_{\alpha_1,\alpha_3} \Gamma$ by setting
\begin{align*}
\alpha_{2,\tau}(z_{3,t})&=z_{3,t}, &
\alpha_{2,\tau}(z_{1,t})&=\begin{cases} z_{1,t} & \tau \neq t \\ \omega_{n_{\tau}}^{-1} z_{1,t} & \tau=t, \end{cases} \\
\alpha_{4,\tau}(z_{1,t})&=z_{1,t}, &
\alpha_{4,\tau}(z_{3,t})&=\begin{cases} z_{3,t} & \tau \neq t \\ \omega_{n_{\tau}}^{-1}z_{3,t} & \tau=t. \end{cases}
\end{align*}

\begin{theorem}
\label{T: crossed product isomorphism}
Under the above setting, the homomorphism $\pi$ in Theorem \ref{theorem: finite index embeddings} extends to a $\ast$-isomorphism 
\[
(\cO(\Aut^+(B)) \rtimes_{\alpha_1,\alpha_3} \Gamma) \rtimes_{\alpha_2,\alpha_4} \Gamma \to M_d \otimes M_d \otimes \cO(S_N^+)
\]
that intertwines the traces induced by the Haar traces.
\end{theorem}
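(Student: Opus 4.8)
The plan is to extend the embedding $\pi$ of Theorem \ref{theorem: finite index embeddings} to the iterated crossed product by sending the unitaries implementing the four families of automorphisms to generalized Pauli operators, and then to show the resulting $\ast$-homomorphism $\Pi$ is a trace-preserving bijection. Writing $z_{1,t},z_{3,t}$ for the unitaries adjoined in the first crossed product $\cO(\Aut^+(B))\rtimes_{\alpha_1,\alpha_3}\Gamma$ and $z_{2,\tau},z_{4,\tau}$ for those adjoined in the second, I would set
\[
\Pi(z_{1,t}) = X^{[t]},\quad \Pi(z_{2,t}) = Z^{[t]},\quad \Pi(z_{3,t}) = \tilde X^{[t]},\quad \Pi(z_{4,t}) = \tilde Z^{[t]},
\]
where $X^{[t]},Z^{[t]}$ act in the first leg of the $t$-th pair (as defined before $Q^{(s,r)}$) and $\tilde X^{[t]},\tilde Z^{[t]}$ are their counterparts in the second leg; all of these lie in $M_d\otimes M_d\otimes 1$. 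These operators are designed so that conjugation reproduces the $\alpha$'s: since $X_{n_t}$ is the diagonal phase $\ket{j}\mapsto\omega_{n_t}^j\ket{j}$ from \eqref{eq:gen-pauli-matrices}, one has $\Ad(X^{[t]})(E^{(s)}_{a,b})=\omega_{n_t}^{\delta_{st}(a-b)}E^{(s)}_{a,b}$, which applied to the explicit formula for $\pi(q_{(i,j),(k,\ell)}^{(s,r)})$ yields exactly the phase $\omega_{n_t}^{i-j}$ defining $\alpha_{1,t}$, while $\tilde X^{[t]}$ produces $\alpha_{3,t}$ in the second leg; since $Z_{n_t}$ is the cyclic shift $E_{a,b}\mapsto E_{a+1,b+1}$, the operators $Z^{[t]},\tilde Z^{[t]}$ produce the index shifts defining $\alpha_{2,t},\alpha_{4,t}$ (the phase $\omega_{n_s}^{-x(i-j)}$ being invariant under $(i,j)\mapsto(i+1,j+1)$).

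Next I would verify that these Paulis satisfy the remaining defining relations, so that $\Pi$ extends by the universal property of the iterated crossed product. The $X^{[t]},\tilde X^{[t]}$ all commute (distinct legs), as do the $Z^{[t]},\tilde Z^{[t]}$, giving the two commuting copies of $\Gamma$, and each has order $n_t\mid d$, matching the order of the corresponding $\alpha_{i,t}$. The only nontrivial cross-relations come from the Weyl commutation $X_nZ_n=\omega Z_nX_n$, which gives $\Ad(Z^{[\tau]})(X^{[t]})=\omega_{n_\tau}^{-\delta_{\tau t}}X^{[t]}$ and $\Ad(\tilde Z^{[\tau]})(\tilde X^{[t]})=\omega_{n_\tau}^{-\delta_{\tau t}}\tilde X^{[t]}$; these match precisely the extended action of $\alpha_{2,\tau},\alpha_{4,\tau}$ on $z_{1,t},z_{3,t}$, while the remaining mixed pairs commute (distinct legs), matching the trivial parts of those actions. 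Hence all relations hold and $\Pi$ is a well-defined $\ast$-homomorphism.

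It remains to show $\Pi$ is a trace-preserving bijection. The image contains the Weyl operators $X^{[t]},Z^{[t]},\tilde X^{[t]},\tilde Z^{[t]}$, which generate $M_d\otimes M_d\otimes 1$, together with the elements $\pi(q_{(i,j),(k,\ell)}^{(s,r)})$; multiplying the latter by matrix units from $M_d\otimes M_d\otimes 1$ and summing isolates each $1\otimes 1\otimes u_{(s,x,y),(r,v,w)}$ (this is the content of the displayed $\rho$-formula), so $1\otimes1\otimes\cO(S_N^+)$ lies in the image as well, whence $\Pi$ is surjective. For the trace, recall from Theorem \ref{theorem: finite index embeddings} the slice identity $(\id\otimes\id\otimes h_{S_N^+})\pi(a)=h_{\Aut^+(B)}(a)\,(1\otimes1)$. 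Writing a general element of the iterated crossed product as $\sum_{g,h}a_{g,h}\,z^{(1)}_g z^{(2)}_h$ with $a_{g,h}\in\cO(\Aut^+(B))$, its canonical trace is $h(a_{e,e})$; under $\Pi$ each $z^{(1)}_g z^{(2)}_h$ becomes a Weyl operator in $M_d\otimes M_d\otimes1$ that equals the identity only when $g=h=e$ and is otherwise traceless, so slicing first by $(\id\otimes\id\otimes h_{S_N^+})$ and then by $\tr\otimes\tr$ returns $h(a_{e,e})$. Thus $\Pi$ intertwines the traces; since the Haar trace is faithful and the canonical trace of a finite crossed product by a trace-preserving action remains faithful, trace preservation forces $\ker\Pi=0$, and with surjectivity $\Pi$ is the asserted isomorphism.

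The main obstacle is the bookkeeping of the second and third paragraphs: one must confirm that conjugation by the chosen Paulis reproduces all four automorphism families on the nose, including the exact $\omega$-phases, and --- most delicately --- that the second-layer operators $Z^{[\tau]},\tilde Z^{[\tau]}$ implement the \emph{extended} action of $\alpha_2,\alpha_4$ on the first-layer unitaries $z_1,z_3$ with the correct signs, which is precisely where the Weyl commutation relation enters. Once these covariance identities are secured, the slice identity from Theorem \ref{theorem: finite index embeddings} renders the trace computation, and hence injectivity, essentially automatic.
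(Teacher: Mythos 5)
Your proposal is correct and follows the same route as the paper: the paper's (very terse) proof simply extends $\pi$ by sending $z_{1,t},z_{2,t}$ to $X_{n_t}^{(t)}\otimes I_d\otimes 1$, $Z_{n_t}^{(t)}\otimes I_d\otimes 1$ and $z_{3,t},z_{4,t}$ to their counterparts in the second $M_d$ leg, exactly your assignment, and appeals to the displayed formula for $(\id_d\otimes\id_d\otimes\pi)(Q^{(s,r)})$ for compatibility. Your additional verifications — the Weyl covariance identities, the Fourier-inversion argument for surjectivity, and the trace computation yielding injectivity — are all correct and merely spell out what the paper leaves implicit.
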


\begin{proof}
We extend $\pi$ by the following formula on the generators of two copies of $\Gamma$:
\begin{align*}
\pi(z_{1,t})&=X_{n_t}^{(t)} \otimes I_d \otimes 1, &
\pi(z_{3,t})&=I_d \otimes X_{n_t}^{(t)} \otimes 1, \\
\pi(z_{2,t})&=Z_{n_t}^{(t)} \otimes I_d \otimes 1, &
\pi(z_{4,t})&=I_d \otimes Z_{n_t}^{(t)} \otimes 1,
\end{align*}
with generalized Pauli matrices as in \eqref{eq:gen-pauli-matrices}.
The compatibility with the original $\pi$ on $\cO(\Aut^+(B))$ follows from explicit presentation of $(\id_{d} \otimes \id_{d}\otimes \pi)(Q^{(s,r)})$ in the previous section.
\end{proof}

We thus see that, the Weyl unitary error basis of $M_d \otimes M_d$, together with the image of $\pi$ that was again defined through Weyl unitary bases of direct summands $M_{n_r}$ of $B$ and maximally entangled bases of $\bC^{n_r} \otimes \bC^{n_r}$, generate a matrix amplification of $\cO(S_N^+)$.

There is a similar extension of the homomorphism $\rho$.
On the algebra $\cO(S_N^+)$, we define $\ast$-algebra automorphisms $\beta_{\gamma,t}$, $1 \leq \gamma \leq 4$, $1 \leq t \leq m$, by
\begin{align*}
\beta_{1,t}(u_{(s,x,y),(r,v,w)})&=\begin{cases}
u_{(s,x,y),(r,v,w)} & (s \neq t) \\
u_{(s,x+1,y),(r,v,w)} & (s=t), \end{cases}\\
\beta_{3,t}(u_{(s,x,y),(r,v,w)})&=\begin{cases}
u_{(s,x,y),(r,v,w)} & (r \neq t) \\
u_{(s,x,y+1),(r,v,w)} & (r=t), \end{cases} \\
\beta_{2,t}(u_{(s,x,y),(r,v,w)})&=\begin{cases}
u_{(s,x,y),(r,v,w)} & (s \neq t \\
u_{(s,x,y),(r,v-1,w)}& (s=t), \end{cases}\\
\beta_{4,t}(u_{(s,x,y),(r,v,w)})&=\begin{cases}
u_{(s,x,y),(r,v,w)} & (r \neq t) \\
u_{(s,x,y),(r,v,w-1)} & (r=t). \end{cases}
\end{align*}
Evidently, each $\beta_{\gamma,t}$ is an automorphism on $\cO(S_N^+)$ of order $n_t$, while $\beta_{1,t} \circ \beta_{3,\tau}=\beta_{3,\tau} \circ \beta_{1,t}$ and $\beta_{2,t} \circ \beta_{4,\tau}=\beta_{4,\tau} \circ \beta_{2,t}$ for all $1 \leq t,\tau \leq m$.
We thus get a group action of $\Gamma$ on $\cO(S_N^+)$.
Again these preserve the Haar trace as in Proposition \ref{prop:alpha-pres-haar-tr}.

As before we form the crossed product
\[
\cO(S_N^+) \rtimes_{\beta_1,\beta_3} \Gamma,
\]
which is the universal $\ast$-algebra generated by elements $u_{(s,x,y),(r,v,w)}$, $1 \leq r,s \leq m$, $0 \leq x,y \leq n_s-1$, $0 \leq v,w \leq n_r-1$, and $z_{1,t}$, $z_{3,t}$, $1 \leq t \leq m$, satisfying the following:
\begin{itemize}
\item
the elements $u_{(s,x,y),(r,v,w)}$ satisfy the relations of the fundamental unitary in $\cO(S_N^+)$;
\item
$z_{1,t}^{n_t}=1=z_{3,t}^{n_t}$ and $z_{1,t}^*=z_{1,t}^{n_t-1}$ and $z_{3,t}^*=z_{3,t}^{n_t-1}$ for all $1 \leq t \leq m$; 
\item
$[z_{1,t},z_{3,\tau}]=0$ for all $t,\tau$; 
\item
$[z_{1,t},z_{1,\tau}]=[z_{3,t},z_{3,\tau}]=0$ for all $t,\tau$; and
\item
$z_{1,t}Az_{1,t}^*=\beta_{1,t}(A)$ and $z_{3,t}Az_{3,t}^*=\beta_{3,t}(A)$ for all $A \in \cO(S_N^+)$.
\end{itemize}

The automorphisms $\beta_{2,t},\beta_{4,t}$ extend to $\cO(S_N^+) \rtimes_{\beta_1,\beta_3} \Gamma$ by setting
\begin{align*}
\beta_{2,t}(z_{3,\tau})&=z_{3,\tau} \quad \forall 1 \leq \tau \leq m,&
\beta_{2,t}(z_{1,\tau})&=\begin{cases} z_{1,\tau} & t \neq \tau \\ \omega_{n_t}^{-1} z_{1,\tau} & t=\tau \end{cases} \\
\beta_{4,t}(z_{1,\tau})&=z_{1,\tau}, \quad \forall 1 \leq \tau \leq m,&
\beta_{4,t}(z_{3,\tau})&=\begin{cases} z_{3,\tau} & t \neq \tau \\ \omega_{n_t}^{-1} z_{3,\tau} & t=\tau. \end{cases}
\end{align*}

\begin{theorem}\label{T: crossed product isomorphism 2}
Under the above setting, the homomorphism $\rho$ in Theorem \ref{theorem: finite index embeddings} extends to a $\ast$-isomorphism 
\[
\cO(S_N^+) \rtimes_{\beta_1,\beta_3} \Gamma \rtimes_{\beta_2,\beta_4} \Gamma \to M_d \otimes M_d \otimes \cO(\Aut^+(B))
\]
that intertwines the traces induced by the Haar traces.
\end{theorem}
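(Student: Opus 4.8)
The plan is to mirror the proof of Theorem \ref{T: crossed product isomorphism}, exploiting the symmetry between $\pi$ and $\rho$: the present statement is the dual of that one, with the roles of $\cO(\Aut^+(B))$ and $\cO(S_N^+)$ interchanged. First I would extend the trace-preserving embedding $\rho$ of Theorem \ref{theorem: finite index embeddings} to the two copies of $\Gamma$ by sending each group generator $z_{\gamma,t}$ to a generalized Pauli matrix $X_{n_t}^{(t)}$ or $Z_{n_t}^{(t)}$ placed in the first or the second $M_d$ tensor factor (and tensored with $1 \in \cO(\Aut^+(B))$), where the precise placement is dictated by which index of $u_{(s,x,y),(r,v,w)}$ the automorphism $\beta_{\gamma,t}$ shifts. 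Here the pairing $z_{1,t},z_{3,t} \leftrightarrow X$-type and $z_{2,t},z_{4,t} \leftrightarrow Z$-type, distributed across the two $M_d$ factors, is the natural candidate.

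The crux is to verify that this extension respects every defining relation of the iterated crossed product. The order relations and the commutation relations among the $z$'s are immediate from the Weyl relation $X_{n_t}Z_{n_t} = \omega_{n_t} Z_{n_t} X_{n_t}$ of \eqref{eq:gen-pauli-matrices} together with the fact that Pauli matrices in distinct blocks or distinct $M_d$ factors commute. For the covariance relations $z_{\gamma,t} A z_{\gamma,t}^* = \beta_{\gamma,t}(A)$ on $A \in \cO(S_N^+)$, the efficient tool is the conjugation form
\[
\rho(u_{(s,x,y),(r,v,w)}) = (T_{x,-y}^{[s]} \otimes T_{v,-w}^{[r]} \otimes 1)\Bigl(\tfrac{1}{n_r} Q^{(s,r)}\Bigr)(T_{x,-y}^{[s]} \otimes T_{v,-w}^{[r]} \otimes 1)^*
\]
established in the previous subsection: conjugating by one of the chosen Pauli matrices absorbs into the Weyl factor $T^{[\cdot]}_{\cdot,\cdot}$ on the left, shifting exactly one Weyl exponent, and hence exactly one of the indices $x,y,v,w$, up to a root-of-unity scalar that cancels in the conjugation. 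Matching these shifts against the definitions of $\beta_{1,t},\dots,\beta_{4,t}$ yields the covariance on $\cO(S_N^+)$, while the action of the second copy of $\Gamma$ on the already-adjoined unitaries $z_{1,t},z_{3,t}$ reproduces exactly the root-of-unity factors in the extension formulas for $\beta_{2,t},\beta_{4,t}$, again directly from $X_{n_t}Z_{n_t}=\omega_{n_t}Z_{n_t}X_{n_t}$.

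Once the extension is shown to be a $\ast$-homomorphism, I would argue bijectivity and trace-intertwining as follows. On $\cO(S_N^+)$ the map is trace-preserving by Theorem \ref{theorem: finite index embeddings}, and each $z_{\gamma,t}$ maps to a nontrivial (hence traceless) Pauli, matching the vanishing of the canonical trace on nontrivial group elements of the crossed product; since the actions preserve the Haar trace (as in Proposition \ref{prop:alpha-pres-haar-tr}), the canonical trace on the iterated crossed product is faithful, so trace-preservation forces injectivity. Surjectivity is then clear: the images of the $z_{\gamma,t}$ span the Weyl bases and hence all of $M_d \otimes M_d \otimes 1$, and multiplying the images of the $Q^{(s,r)}$ by matrix units drawn from $M_d \otimes M_d$ recovers $1 \otimes 1 \otimes \cO(\Aut^+(B))$. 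Therefore the extended $\rho$ is the desired trace-intertwining $\ast$-isomorphism.

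The main obstacle is the bookkeeping in the covariance step. Because $\beta_{2,t}$ and $\beta_{3,t}$ shift an index attached to one summand while being triggered by the label of the other summand, one must pin down the correct dictionary between Pauli placements and the $\beta_{\gamma,t}$, keeping careful track of which index each conjugation moves and of the phases produced by the Weyl relations, and verifying that these phases cancel in every conjugation. Once this dictionary is fixed, the verification reduces to precisely the computations already carried out for $\pi$ in Theorem \ref{T: crossed product isomorphism}, and the remaining steps are routine.
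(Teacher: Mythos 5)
Your proposal is correct and follows essentially the same route as the paper: the paper's proof simply extends $\rho$ by the same Pauli-matrix formulas used for $\pi$ in Theorem \ref{T: crossed product isomorphism} and relies on the conjugation form of $\rho(u_{(s,x,y),(r,v,w)})$ established just before, exactly as you describe. Your additional remarks on injectivity via trace-faithfulness and surjectivity via the Weyl basis are consistent with (and slightly more explicit than) the paper's argument.
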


\begin{proof}
We get this by defining $\rho(z_{i,t})$ by the same formula as in the proof of Theorem \ref{T: crossed product isomorphism}.
\end{proof}

\raggedright
\begin{bibdiv}
\begin{biblist}

\bib{MR2681256}{article}{
      author={Andruskiewitsch, Nicol\'{a}s},
      author={Bichon, Julien},
       title={Examples of inner linear {H}opf algebras},
        date={2010},
        ISSN={0041-6932},
     journal={Rev. Un. Mat. Argentina},
      volume={51},
      number={1},
       pages={7\ndash 18},
         url={https://mathscinet.ams.org/mathscinet-getitem?mr=2681256},
      review={\MR{2681256}},
}

\bib{MR1378260}{article}{
   author={Banica, Teodor},
   title={Th\'{e}orie des repr\'{e}sentations du groupe quantique compact libre
   ${\rm O}(n)$},
   language={French, with English and French summaries},
   journal={C. R. Acad. Sci. Paris S\'{e}r. I Math.},
   volume={322},
   date={1996},
   number={3},
   pages={241--244},
   issn={0764-4442},
   review={\MR{1378260}},
}

\bib{Ba97}{article}{
      author={Banica, Teodor},
       title={Le groupe quantique compact libre {${\rm U}(n)$}},
        date={1997},
        ISSN={0010-3616},
     journal={Comm. Math. Phys.},
      volume={190},
      number={1},
       pages={143\ndash 172},
  eprint={\href{http://arxiv.org/abs/math/9901042v5}{\texttt{arXiv:math/9901042v5
  [math.QA]}}},
         url={http://dx.doi.org/10.1007/s002200050237},
         doi={10.1007/s002200050237},
      review={\MR{1484551 (99k:46095)}},
}

\bib{MR1709109}{article}{
      author={Banica, Teodor},
       title={Symmetries of a generic coaction},
        date={1999},
        ISSN={0025-5831},
     journal={Math. Ann.},
      number={4},
       pages={763\ndash 780},
         url={http://dx.doi.org/10.1007/s002080050315},
         doi={10.1007/s002080050315},
}

\bib{MR2492990}{article}{
      author={Banica, Teodor},
      author={Bichon, Julien},
       title={Quantum groups acting on 4 points},
        date={2009},
        ISSN={0075-4102},
     journal={J. Reine Angew. Math.},
      volume={626},
       pages={75\ndash 114},
         url={http://dx.doi.org/10.1515/CRELLE.2009.003},
         doi={10.1515/CRELLE.2009.003},
      review={\MR{2492990 (2010c:46153)}},
}

\bib{bb-inner}{article}{
      author={Banica, Teodor},
      author={Bichon, Julien},
       title={Hopf images and inner faithful representations},
        date={2010},
        ISSN={0017-0895},
     journal={Glasg. Math. J.},
      volume={52},
      number={3},
       pages={677\ndash 703},
  url={https://doi-org.offcampus.lib.washington.edu/10.1017/S0017089510000510},
         doi={10.1017/S0017089510000510},
      review={\MR{2679923}},
}

\bib{BaBiCu11}{article}{
      author={Banica, Teodor},
      author={Bichon, Julien},
      author={Curran, Stephen},
       title={Quantum automorphisms of twisted group algebras and free
  hypergeometric laws},
        date={2011},
        ISSN={0002-9939},
     journal={Proc. Amer. Math. Soc.},
      volume={139},
      number={11},
       pages={3961\ndash 3971},
         url={https://doi.org/10.1090/S0002-9939-2011-10877-3},
         doi={10.1090/S0002-9939-2011-10877-3},
      review={\MR{2823042}},
}

\bib{BhBrChWa17}{article}{
      author={Bhattacharya, Angshuman},
      author={Brannan, Michael},
      author={Chirvasitu, Alexandru},
      author={Wang, Shuzhou},
       title={Property ({T}), property ({F}) and residual finiteness for
  discrete quantum groups},
        date={2020},
        ISSN={1661-6952},
     journal={J. Noncommut. Geom.},
      volume={14},
      number={2},
       pages={567\ndash 589},
         url={https://doi.org/10.4171/jncg/373},
         doi={10.4171/jncg/373},
      review={\MR{4130839}},
}

\bib{MR1975007}{article}{
      author={Biane, P.},
      author={Capitaine, M.},
      author={Guionnet, A.},
       title={Large deviation bounds for matrix {B}rownian motion},
        date={2003},
        ISSN={0020-9910},
     journal={Invent. Math.},
      volume={152},
      number={2},
       pages={433\ndash 459},
         url={http://dx.doi.org/10.1007/s00222-002-0281-4},
         doi={10.1007/s00222-002-0281-4},
      review={\MR{1975007}},
}

\bib{bichon-lect-note-gal-ext-cpt-qgrp}{misc}{
      author={Bichon, Julien},
       title={Galois extension for a compact quantum group},
         how={manuscript},
        note={lecture note available at the author's website}
}

\bib{MR3049699}{article}{
      author={Bichon, Julien},
       title={Hochschild homology of {H}opf algebras and free
  {Y}etter-{D}rinfeld resolutions of the counit},
        date={2013},
        ISSN={0010-437X},
     journal={Compos. Math.},
      volume={149},
      number={4},
       pages={658\ndash 678},
         url={https://doi.org/10.1112/S0010437X12000656},
         doi={10.1112/S0010437X12000656},
      review={\MR{3049699}},
}

\bib{MR2202309}{article}{
      author={Bichon, Julien},
      author={De~Rijdt, An},
      author={Vaes, Stefaan},
       title={Ergodic coactions with large multiplicity and monoidal
  equivalence of quantum groups},
        date={2006},
        ISSN={0010-3616},
     journal={Comm. Math. Phys.},
      volume={262},
      number={3},
       pages={703\ndash 728},
  eprint={\href{http://arxiv.org/abs/math/0502018}{\texttt{arXiv:math/0502018
  [math.OA]}}},
         url={http://dx.doi.org/10.1007/s00220-005-1442-2},
         doi={10.1007/s00220-005-1442-2},
      review={\MR{2202309 (2007a:46072)}},
}

\bib{MR3784753}{article}{
  author     = {Bichon, Julien},
  author     = {Kyed, David},
  author     = {Raum, Sven},
  journal    = {Canad. Math. Bull.},
  title      = {Higher {$\ell^2$}-{B}etti numbers of universal quantum groups},
  year       = {2018},
  issn       = {0008-4395},
  number     = {2},
  pages      = {225--235},
  volume     = {61},
  doi        = {10.4153/CMB-2017-036-3},
  url        = {https://doi-org.ezproxy.uio.no/10.4153/CMB-2017-036-3},
}

\bib{MR3580173}{article}{
      author={Bichon, Julien},
      author={Neshveyev, Sergey},
      author={Yamashita, Makoto},
       title={Graded twisting of categories and quantum groups by group
  actions},
        date={2016},
        ISSN={0373-0956},
     journal={Ann. Inst. Fourier (Grenoble)},
      volume={66},
      number={6},
       pages={2299\ndash 2338},
         url={http://dx.doi.org/10.5802/aif.3064},
         doi={10.5802/aif.3064},
}

\bib{MR3194750}{article}{
      author={Bichon, Julien},
      author={Yuncken, Robert},
       title={Quantum subgroups of the compact quantum group {$\rm
  SU_{-1}(3)$}},
        date={2014},
        ISSN={0024-6093},
     journal={Bull. Lond. Math. Soc.},
      volume={46},
      number={2},
       pages={315\ndash 328},
      eprint={\href{http://arxiv.org/abs/1306.6244}{\texttt{arXiv:1306.6244
  [math.QA]}}},
         url={http://dx.doi.org/10.1112/blms/bdt105},
         doi={10.1112/blms/bdt105},
      review={\MR{3194750}},
}
%

\bib{MR2995437}{article}{
  author     = {Brannan, Michael},
  journal    = {J. Reine Angew. Math.},
  title      = {Approximation properties for free orthogonal and free unitary quantum groups},
  year       = {2012},
  issn       = {0075-4102},
  pages      = {223--251},
  volume     = {672},
  doi        = {10.1515/crelle.2011.166},
  url        = {https://doi-org.ezproxy.uio.no/10.1515/crelle.2011.166},
}

\bib{MR3138849}{article}{
      author={Brannan, Michael},
       title={Reduced operator algebras of trace-preserving quantum
  automorphism groups},
        date={2013},
        ISSN={1431-0635},
     journal={Doc. Math.},
      volume={18},
       pages={1349\ndash 1402},
         url={https://mathscinet.ams.org/mathscinet-getitem?mr=3138849},
      review={\MR{3138849}},
}

\bib{BrChFr18}{article}{
      author={Brannan, Michael},
      author={Chirvasitu, Alexandru},
      author={Freslon, Amaury},
       title={Topological generation and matrix models for quantum reflection
  groups},
        date={2020},
        ISSN={0001-8708},
     journal={Adv. Math.},
      volume={363},
       pages={106982, 31},
         url={https://doi.org/10.1016/j.aim.2020.106982},
         doi={10.1016/j.aim.2020.106982},
      review={\MR{4053516}},
}

\bib{BCV}{article}{
      author={Brannan, Michael},
      author={Collins, Beno\^\i{t}},
      author={Vergnioux, Roland},
       title={The {C}onnes embedding property for quantum group von {N}eumann
  algebras},
        date={2017},
        ISSN={0002-9947},
     journal={Trans. Amer. Math. Soc.},
      volume={369},
      number={6},
       pages={3799\ndash 3819},
         url={https://doi-org.offcampus.lib.washington.edu/10.1090/tran/6752},
         doi={10.1090/tran/6752},
      review={\MR{3624393}},
}

\bib{BEVW20}{misc}{
      author={Brannan, Mike},
      author={Eifler, Kari},
      author={Voigt, Christian},
      author={Weber, Moritz},
       title={Quantum {C}untz--{K}rieger algebras},
         how={preprint},
        date={2020},
      eprint={\href{http://arxiv.org/abs/2009.09466}{\texttt{arXiv:2009.09466
  [math.OA]}}},
        note={to appear in Trans. Amer. Math. Soc.},
}

\bib{BGH20}{misc}{
      author={Brannan, Michael},
      author={Ganesan, Priyanga},
      author={Harris, Samuel~J.},
       title={The quantum-to-classical graph homomorphism game},
         how={preprint},
        date={2020},
      eprint={\href{http://arxiv.org/abs/2009.07229}{\texttt{arXiv:2009.07229
  [math.OA]}}},
}


\bib{MR3783410}{article}{
      author={Brannan, Michael},
      author={Vergnioux, Roland},
       title={Orthogonal free quantum group factors are strongly 1-bounded},
        date={2018},
        ISSN={0001-8708},
     journal={Adv. Math.},
      volume={329},
       pages={133\ndash 156},
         url={http://dx.doi.org/10.1016/j.aim.2018.02.007},
         doi={10.1016/j.aim.2018.02.007},
}

\bib{MR2439665}{article}{
      author={Brown, Nathanial},
      author={Dykema, Kenneth},
      author={Jung, Kenley},
       title={Free entropy dimension in amalgamated free products},
        date={2008},
        ISSN={0024-6115},
     journal={Proc. Lond. Math. Soc. (3)},
      volume={97},
      number={2},
       pages={339\ndash 367},
         url={http://dx.doi.org/10.1112/plms/pdm054},
         doi={10.1112/plms/pdm054},
        note={With an appendix by Wolfgang L\"{u}ck},
}

\bib{MR4211088}{article}{
  author   = {Caspers, Martijn},
  journal  = {Math. Ann.},
  title    = {Gradient forms and strong solidity of free quantum groups},
  year     = {2021},
  issn     = {0025-5831},
  number   = {1-2},
  pages    = {271--324},
  volume   = {379},
  doi      = {10.1007/s00208-020-02109-y},
  url      = {https://doi-org.ezproxy.uio.no/10.1007/s00208-020-02109-y},
}

\bib{MR3263140}{article}{
      author={Chirvasitu, Alexandru},
       title={Cosemisimple {H}opf algebras are faithfully flat over {H}opf
  subalgebras},
        date={2014},
        ISSN={1937-0652},
     journal={Algebra Number Theory},
      volume={8},
      number={5},
       pages={1179\ndash 1199},
         url={http://dx.doi.org/10.2140/ant.2014.8.1179},
         doi={10.2140/ant.2014.8.1179},
      review={\MR{3263140}},
}

\bib{chi-rfd}{article}{
      author={Chirvasitu, Alexandru},
       title={Residually finite quantum group algebras},
        date={2015},
        ISSN={0022-1236},
     journal={J. Funct. Anal.},
      volume={268},
      number={11},
       pages={3508\ndash 3533},
  url={https://doi-org.offcampus.lib.washington.edu/10.1016/j.jfa.2015.01.013},
         doi={10.1016/j.jfa.2015.01.013},
      review={\MR{3336732}},
}

\bib{MR2180603}{article}{
      author={Connes, Alain},
      author={Shlyakhtenko, Dimitri},
       title={{$L\sp 2$}-homology for von {N}eumann algebras},
        date={2005},
        ISSN={0075-4102},
     journal={J. Reine Angew. Math.},
      volume={586},
       pages={125\ndash 168},
      review={\MR{MR2180603 (2007b:46104)}},
}

\bib{MR2806547}{article}{
      author={De~Commer, Kenny},
       title={Galois objects and cocycle twisting for locally compact quantum
  groups},
        date={2011},
        ISSN={0379-4024},
     journal={J. Operator Theory},
      volume={66},
      number={1},
       pages={59\ndash 106},
      review={\MR{2806547}},
}

\bib{DeFrYa14}{article}{
      author={De~Commer, Kenny},
      author={Freslon, Amaury},
      author={Yamashita, Makoto},
       title={C{CAP} for universal discrete quantum groups},
        date={2014},
        ISSN={0010-3616},
     journal={Comm. Math. Phys.},
      volume={331},
      number={2},
       pages={677\ndash 701},
         url={https://doi.org/10.1007/s00220-014-2052-7},
         doi={10.1007/s00220-014-2052-7},
        note={With an appendix by Stefaan Vaes},
      review={\MR{3238527}},
}
%

\bib{DeVa10}{article}{
      author={De~Rijdt, An},
      author={Vander~Vennet, Nikolas},
       title={Actions of monoidally equivalent compact quantum groups and
  applications to probabilistic boundaries},
        date={2010},
        ISSN={0373-0956},
     journal={Ann. Inst. Fourier (Grenoble)},
      volume={60},
      number={1},
       pages={169\ndash 216},
         url={http://aif.cedram.org/item?id=AIF_2010__60_1_169_0},
      review={\MR{2664313}},
}

\bib{MR1213985}{article}{
      author={Doi, Yukio},
       title={Braided bialgebras and quadratic bialgebras},
        date={1993},
        ISSN={0092-7872},
     journal={Comm. Algebra},
      volume={21},
      number={5},
       pages={1731\ndash 1749},
         url={http://dx.doi.org/10.1080/00927879308824649},
         doi={10.1080/00927879308824649},
      review={\MR{1213985 (94a:16071)}},
}

\bib{MR4288353}{article}{
      author={Elzinga, Floris},
       title={Strong 1-boundedness of unimodular orthogonal free quantum
  groups},
        date={2021},
        ISSN={0219-0257},
     journal={Infin. Dimens. Anal. Quantum Probab. Relat. Top.},
      volume={24},
      number={2},
       pages={Paper No. 2150012, 23},
         url={http://dx.doi.org/10.1142/S0219025721500120},
         doi={10.1142/S0219025721500120},
}

\bib{MR1642584}{book}{
      author={Evans, David~E.},
      author={Kawahigashi, Yasuyuki},
       title={Quantum symmetries on operator algebras},
      series={Oxford Mathematical Monographs},
   publisher={The Clarendon Press Oxford University Press},
     address={New York},
        date={1998},
        ISBN={0-19-851175-2},
        note={Oxford Science Publications},
      review={\MR{1642584 (99m:46148)}},
}

\bib{EL}{article}{
    AUTHOR = {Exel, Ruy},
    AUTHOR = {Loring, Terry A.},
     TITLE = {Finite-dimensional representations of free product
              {$C^*$}-algebras},
   JOURNAL = {Internat. J. Math.},
  FJOURNAL = {International Journal of Mathematics},
    VOLUME = {3},
      YEAR = {1992},
    NUMBER = {4},
     PAGES = {469--476},
      ISSN = {0129-167X},
   MRCLASS = {46L05},
  MRNUMBER = {1168356},
MRREVIEWER = {Robert S. Doran},
       DOI = {10.1142/S0129167X92000217},
       URL = {https://doi.org/10.1142/S0129167X92000217},
}

\bib{MR3455859}{article}{
  author     = {Fima, Pierre},
  author     = {Vergnioux, Roland},
  journal    = {Int. Math. Res. Not. IMRN},
  title      = {A cocycle in the adjoint representation of the orthogonal free quantum groups},
  year       = {2015},
  issn       = {1073-7928},
  number     = {20},
  pages      = {10069--10094},
  doi        = {10.1093/imrn/rnu268},
  url        = {https://doi-org.ezproxy.uio.no/10.1093/imrn/rnu268},
}

\bib{MR3084500}{article}{
  author     = {Freslon, Amaury},
  journal    = {J. Funct. Anal.},
  title      = {Examples of weakly amenable discrete quantum groups},
  year       = {2013},
  issn       = {0022-1236},
  number     = {9},
  pages      = {2164--2187},
  volume     = {265},
  doi        = {10.1016/j.jfa.2013.05.037},
  url        = {https://doi-org.ezproxy.uio.no/10.1016/j.jfa.2013.05.037},
}


\bib{GWPrep}{misc}{
    author = {Gao, Li},
    author = {Weeks, John},
     title = {Furstenberg boundaries for quantum automorphism groups},
       how = {In preparation}
}

\bib{MR3801429}{article}{
      author={Hayes, Ben},
       title={1-bounded entropy and regularity problems in von neumann
  algebras},
        date={2018},
        ISSN={1073-7928},
     journal={Int. Math. Res. Not. IMRN},
      number={1},
       pages={57\ndash 137},
         url={http://dx.doi.org/10.1093/imrn/rnw237},
         doi={10.1093/imrn/rnw237},
}

\bib{arXiv:2107.03278}{misc}{
      author={Hayes, Ben},
      author={Jekel, David},
      author={Elayavalli, Srivatsav~Kunnawalkam},
       title={Property {(T)} and strong {$1$}-boundedness for von {N}eumann
  algebras},
         how={preprint},
        date={2021},
      eprint={\href{http://arxiv.org/abs/2107.03278}{\texttt{arXiv:2107.03278
  [math.OA]}}},
}

\bib{MR3391904}{article}{
  author     = {Isono, Yusuke},
  journal    = {Trans. Amer. Math. Soc.},
  title      = {Examples of factors which have no {C}artan subalgebras},
  year       = {2015},
  issn       = {0002-9947},
  number     = {11},
  pages      = {7917--7937},
  volume     = {367},
  doi        = {10.1090/tran/6321},
  url        = {https://doi-org.ezproxy.uio.no/10.1090/tran/6321},
}
%

\bib{arXiv:math/0410594}{misc}{
      author={Jung, Kenley},
       title={Some free entropy dimension inequalities for subfactors},
         how={preprint},
        date={2004},
  eprint={\href{http://arxiv.org/abs/math/0410594}{\texttt{arXiv:math/0410594
  [math.OA]}}},
}

\bib{MR2373014}{article}{
      author={Jung, Kenley},
       title={Strongly 1-bounded von Neumann algebras},
        date={2007},
        ISSN={1016-443X},
     journal={Geom. Funct. Anal.},
      volume={17},
      number={4},
       pages={1180\ndash 1200},
         url={http://dx.doi.org/10.1007/s00039-007-0624-9},
         doi={10.1007/s00039-007-0624-9},
}

\bib{arXiv:1602.04726}{misc}{
      author={Jung, Kenley},
       title={The Rank Theorem and $L^2$-invariants in Free Entropy: Global Upper Bounds},
         how={preprint},
        date={2016},
      eprint={\href{https://arxiv.org/abs/1602.04726}{\texttt{arXiv:1602.04726
  [math.OA]}}},
}

\bib{MR2464704}{article}{
      author={Kyed, David},
       title={{$L^2$}-homology for compact quantum groups},
        date={2008},
        ISSN={0025-5521},
     journal={Math. Scand.},
      volume={103},
      number={1},
       pages={111\ndash 129},
         url={http://dx.doi.org/10.7146/math.scand.a-15072},
         doi={10.7146/math.scand.a-15072},
      review={\MR{2464704}},
}

\bib{MR3683927}{article}{
      author={Kyed, David},
      author={Raum, Sven},
      author={Vaes, Stefaan},
      author={Valvekens, Matthias},
       title={{$L^2$}-{B}etti numbers of rigid {$C^*$}-tensor categories and
  discrete quantum groups},
        date={2017},
        ISSN={2157-5045},
     journal={Anal. PDE},
      volume={10},
      number={7},
       pages={1757\ndash 1791},
         url={https://doi.org/10.2140/apde.2017.10.1757},
         doi={10.2140/apde.2017.10.1757},
      review={\MR{3683927}},
}

\bib{MR3240820}{article}{
      author={Mrozinski, Colin},
       title={Quantum automorphism groups and {$SO(3)$}-deformations},
        date={2015},
        ISSN={0022-4049},
     journal={J. Pure Appl. Algebra},
      volume={219},
      number={1},
       pages={1\ndash 32},
      eprint={\href{http://arxiv.org/abs/1303.7091}{\texttt{arXiv:1303.7091
  [math.QA]}}},
         url={http://dx.doi.org/10.1016/j.jpaa.2014.04.006},
         doi={10.1016/j.jpaa.2014.04.006},
      review={\MR{3240820}},
}


\bib{MR3204665}{book}{
      author={Neshveyev, Sergey},
      author={Tuset, Lars},
       title={Compact quantum groups and their representation categories},
      series={Cours Sp{\'e}cialis{\'e}s [Specialized Courses]},
   publisher={Soci{\'e}t{\'e} Math{\'e}matique de France, Paris},
        date={2013},
      volume={20},
        ISBN={978-2-85629-777-3},
      review={\MR{3204665}},
}

\bib{NT-actions}{article}{
    author={Neshveyev, Sergey},
      author={Tuset, Lars},
     TITLE = {Deformation of {$\rm C^\ast$}-algebras by cocycles on locally
              compact quantum groups},
   JOURNAL = {Adv. Math.},
  FJOURNAL = {Advances in Mathematics},
    VOLUME = {254},
      YEAR = {2014},
     PAGES = {454--496},
      ISSN = {0001-8708},
   MRCLASS = {46L65 (20G42 46L89 81R50)},
  MRNUMBER = {3161105},
MRREVIEWER = {Kenny De Commer},
       DOI = {10.1016/j.aim.2013.12.025},
       URL = {https://doi.org/10.1016/j.aim.2013.12.025},
}

\bib{MR4198970}{article}{
  author   = {Shlyakhtenko, Dimitri},
  journal  = {J. Operator Theory},
  title    = {Von {N}eumann algebras of sofic groups with {$\beta^{(2)}_1=0$} are strongly 1-bounded},
  year     = {2021},
  issn     = {0379-4024},
  number   = {1},
  pages    = {217--228},
  volume   = {85},
  doi      = {10.7900/jot},
  url      = {https://doi-org.ezproxy.uio.no/10.7900/jot},
}

\bib{So05}{article}{
      author={So{\l}tan, Piotr~M.},
       title={Quantum {B}ohr compactification},
        date={2005},
        ISSN={0019-2082},
     journal={Illinois J. Math.},
      volume={49},
      number={4},
       pages={1245\ndash 1270},
         url={http://projecteuclid.org/euclid.ijm/1258138137},
      review={\MR{2210362}},
}

\bib{Takai}{article}{
    AUTHOR = {Takai, Hiroshi},
     TITLE = {Duality for {$C\sp{\ast} $}-crossed products and its
              applications},
 BOOKTITLE = {Operator algebras and applications, {P}art 1 ({K}ingston,
              {O}nt., 1980)},
    SERIES = {Proc. Sympos. Pure Math.},
    VOLUME = {38},
     PAGES = {369--373},
 PUBLISHER = {Amer. Math. Soc., Providence, R.I.},
      YEAR = {1982},
   MRCLASS = {46L55},
  MRNUMBER = {679719},
}


\bib{MR2355067}{article}{
  author     = {Vaes, Stefaan},
  author     = {Vergnioux, Roland},
  journal    = {Duke Math. J.},
  title      = {The boundary of universal discrete quantum groups, exactness, and factoriality},
  year       = {2007},
  issn       = {0012-7094},
  number     = {1},
  pages      = {35--84},
  volume     = {140},
  doi        = {10.1215/S0012-7094-07-14012-2},
  url        = {https://doi-org.ezproxy.uio.no/10.1215/S0012-7094-07-14012-2},
}

\bib{MR1382726}{article}{
  author     = {Van Daele, Alfons},
  author     = {Wang, Shuzhou},
  journal    = {Internat. J. Math.},
  title      = {Universal quantum groups},
  year       = {1996},
  issn       = {0129-167X},
  number     = {2},
  pages      = {255--263},
  volume     = {7},
  doi        = {10.1142/S0129167X96000153},
  url        = {https://doi-org.ezproxy.uio.no/10.1142/S0129167X96000153},
}

\bib{MR2889142}{article}{
      author={Vergnioux, Roland},
       title={Paths in quantum {C}ayley trees and {$L^2$}-cohomology},
        date={2012},
        ISSN={0001-8708},
     journal={Adv. Math.},
      volume={229},
      number={5},
       pages={2686\ndash 2711},
         url={http://dx.doi.org/10.1016/j.aim.2012.01.011},
         doi={10.1016/j.aim.2012.01.011},
      review={\MR{2889142}},
}

\bib{MR1371236}{article}{
      author={Voiculescu, Dan},
       title={The analogues of entropy and of {F}isher's information measure in
  free probability theory. {III}. {T}he absence of {C}artan subalgebras},
        date={1996},
        ISSN={1016-443X},
     journal={Geom. Funct. Anal.},
      volume={6},
      number={1},
       pages={172\ndash 199},
         url={http://dx.doi.org/10.1007/BF02246772},
         doi={10.1007/BF02246772},
}

\bib{MR1601878}{article}{
      author={Voiculescu, Dan},
       title={A strengthened asymptotic freeness result for random matrices
  with applications to free entropy},
        date={1998},
        ISSN={1073-7928},
     journal={Internat. Math. Res. Notices},
      number={1},
       pages={41\ndash 63},
         url={http://dx.doi.org/10.1155/S107379289800004X},
         doi={10.1155/S107379289800004X},
}

\bib{MR3717094}{article}{
    AUTHOR = {Voigt, Christian},
     TITLE = {On the structure of quantum automorphism groups},
   JOURNAL = {J. Reine Angew. Math.},
    VOLUME = {732},
      YEAR = {2017},
     PAGES = {255--273},
      ISSN = {0075-4102},
       DOI = {10.1515/crelle-2014-0141},
       URL = {https://doi-org.ezproxy.uio.no/10.1515/crelle-2014-0141},
}

\bib{Wan98}{article}{
      author={Wang, Shuzhou},
       title={Quantum symmetry groups of finite spaces},
        date={1998},
        ISSN={0010-3616},
     journal={Comm. Math. Phys.},
      volume={195},
      number={1},
       pages={195\ndash 211},
         url={http://dx.doi.org/10.1007/s002200050385},
         doi={10.1007/s002200050385},
      review={\MR{1637425 (99h:58014)}},
}
%
%
\bib{arXiv:1107.2512}{misc}{
      author={Yamashita, Makoto},
       title={Deformation of algebras associated with group cocycles},
         how={preprint},
        date={2011},
      eprint={\href{http://arxiv.org/abs/1107.2512}{\texttt{arXiv:1107.2512
  [math.OA]}}},
}

\end{biblist}
\end{bibdiv}

\end{document}